\def \eps {\varepsilon}
\definecolor{slightblue}{rgb}{.8, .8, 1}
\definecolor{hair}{RGB}{100,225,190}
\definecolor{ruby}{RGB}{220,50,120}
\definecolor{grass}{RGB}{150,220,110}
\definecolor{ceruleanblue}{rgb}{0.16, 0.32, 0.75}
\definecolor{deepcarmine}{rgb}{0.66, 0.13, 0.24}
\definecolor{otterbrown}{rgb}{0.4, 0.26, 0.13}
\definecolor{sapphire}{rgb}{0.03, 0.15, 0.4}
\newtheorem{theorem}{Theorem}[section] \newtheorem{proposition}[theorem]{Proposition}
\newtheorem{lemma}[theorem]{Lemma}
\newtheorem{openquestion}[theorem]{Question}
\theoremstyle{definition} 
\newtheorem{definition}[theorem]{Definition}
\newtheorem{remark}[theorem]{Remark} \numberwithin{equation}{section}
\numberwithin{figure}{section}
\newcommand{\Cb}{\mathbb{C}}
\newcommand{\Eb}{\mathbb{E}}
\newcommand{\Hb}{\mathbb{H}}
\newcommand{\Nb}{\mathbb{N}}
\newcommand{\Pb}{\mathbb{P}}
\newcommand{\Rb}{\mathbb{R}}
\newcommand{\Ub}{\mathbb{U}}
\newcommand{\Zb}{\mathbb{Z}}
\newcommand{\Pf}{\mathbf{P}}
\newcommand{\Bf}{\mathbf{B}}
\newcommand{\Cc}{\mathcal{C}}
\newcommand{\Fc}{\mathcal{F}}
\newcommand{\Lc}{\mathcal{L}}
\newcommand{\Qc}{\mathcal{Q}}
\newcommand{\Sc}{\mathcal{S}}
\newcommand{\wt}{\widetilde}
\newcommand{\wh}{\widehat}
\begin{document}

\title {Generalized disconnection exponents}
\author{Wei Qian}
\date{}

\maketitle

\begin{abstract}
We introduce and compute the \emph{generalized disconnection exponents} $\eta_\kappa(\beta)$ which depend on $\kappa\in(0,4]$ and another real parameter $\beta$, extending the Brownian disconnection exponents (corresponding to $\kappa=8/3$) computed by Lawler, Schramm and Werner 2001 (conjectured by Duplantier and Kwon 1988). 

For $\kappa\in(8/3,4]$, the generalized disconnection exponents have a physical interpretation in terms of planar Brownian loop-soups with intensity  $c\in (0,1]$, which allows us to obtain the first prediction of the dimension of multiple points on the cluster boundaries of these loop-soups. 
In particular, according to our prediction, the dimension of double points on the cluster boundaries is strictly positive for $c\in(0,1)$ and equal to zero for the critical intensity $c=1$, leading to an interesting open question of whether such points exist for the critical loop-soup.

Our definition of the exponents is based on a certain general version of radial restriction measures that we construct and study. As an important tool, we introduce a new family of radial SLEs depending on $\kappa$ and two additional parameters $\mu, \nu$, that we call \emph{radial hypergeometric SLEs}.\footnote{We will clarify  the term ``hypergeometric SLE''  in Section~\ref{intro:SLE}. The family of chordal hypergeometric SLEs (as well as the notation hSLE) first appeared in \cite{MR3827221}, but this terminology (as well as the notation hSLE) was later inappropriately used  in \cite{MR4072221,Wu1, Wu2, Wu3}.} This is a natural but substantial extension of the family of radial SLE$_\kappa(\rho)s$. 
\bigskip

\emph{Keywords: Disconnection and intersection exponents, hypergeometric SLE, conformal restriction measure, Brownian loop-soup.}
\end{abstract}

{\setcounter{tocdepth}{2}
  \hypersetup{linkcolor=black}
  \tableofcontents
}

\section{Introduction}
The disconnection and intersection exponents for $n$ independent Brownian motions originated from the observation that certain non-disconnection and non-intersection probabilities for these Brownian motions satisfy a submultiplicativity relation. (This type of argument dates back to \cite{MR0064475}, in the study of self-avoiding walks.) These exponents have later been generalized to make sense for non-integer values of $n$ \cite{MR1742883}. 
The values of the exponents were determined by Lawler, Schramm and Werner in a series of celebrated works \cite{MR1879850,MR1879851, MR1961197} via SLE computations, confirming the conjecture by Duplantier and Kwon \cite{PhysRevLett.61.2514}. 
It turns out that these exponents are related to certain properties of Brownian paths. In particular, the determination of the disconnection exponent for $n=2$ Brownian motions confirmed the famous Mandelbrot conjecture that the outer boundary of planar Brownian motion has Hausdorff dimension $4/3$.

In the pioneering set of works \cite{MR1796962, MR1879850, MR1992830} which exploited conformal invariance and introduced the notion of conformal restriction, it was explained that these exponents are closely related to properties of SLE$_{8/3}$ and SLE$_6$. 
The purpose of the present work is to introduce and compute a generalized version of the disconnection exponents for other values of $\kappa$. We will also informally discuss the generalized intersection exponents at the end.
These generalized exponents are naturally related to geometric properties of Brownian loop-soup clusters.

One of the main difficulties of this work is to find the right definition of such exponents that can be connected to Brownian loop-soups, as well as to find the right family of SLEs to derive these exponents. 
It turns out to be instrumental for us to introduce a new family of \emph{radial hypergeometric SLEs}, as we will explain in Section~\ref{intro:SLE}.

\subsection{Motivation}\label{intro:motivation}

Our work is primarily motivated by the study of planar Brownian loop-soups (introduced in \cite{MR2045953}) with intensity $c\in(0,1]$. 

It is shown in \cite{MR2979861} that this is the range of intensities for which a loop-soup a.s.\ forms infinitely many clusters, and that the outer boundaries of the outermost clusters form a CLE$_\kappa$ where $c$ and $\kappa$ are related by
\begin{align}\label{kappa-c}
c(\kappa)={(6-\kappa)(3\kappa-8)}/{(2\kappa)}.
\end{align}
In particular, the outer boundaries of the clusters have Hausdorff dimension $1+\kappa/8$ \cite{MR2435854}.
Note that the dimension of the outer boundary of a Brownian path is a.s.\ $4/3$,  hence strictly smaller than $1+\kappa/8$ for the range $\kappa\in(8/3,4]$ corresponding by~\eqref{kappa-c} to $c\in(0,1]$.
This implies that a typical point on the boundary of a loop-soup cluster does not belong to any Brownian loop (it is the limit of an infinite chain of Brownian loops).
However, there do exist points that lie on some Brownian loop: If we add an independent Brownian loop into a loop-soup so that it connects several clusters (the law of the new loop-soup is absolutely continuous w.r.t.\ the old one), then some points of this new Brownian loop do belong to the boundary of the new cluster. 

We say that a point is a \emph{simple} (resp.\ \emph{double}, or \emph{$n$-tuple}) point for the loop-soup if it is visited at least once (resp.\ twice, or $n$ times) in total by the Brownian loops in the loop-soup.
Note that the $n$-tuple points for a planar Brownian motion has dimension $2$ for all $n\ge 1$ (this is a nontrivial fact shown in \cite{MR0202193}), hence $n$-tuple points inside a loop-soup should also have the full dimension. 
The original purpose of the present work was to answer the following question.
\begin{openquestion}\label{Q1}
Are there double points on the loop-soup cluster boundaries with intensity $c\in(0,1]$?
\end{openquestion}
From known results, it is not clear what to expect about the answer to Question~\ref{Q1}. Even though we know that Brownian motions have double points on their boundaries \cite{MR2644878}, it does not imply the existence of double points on the loop-soup cluster boundaries, even for arbitrarily small intensity $c$.

Question~\ref{Q1} is motivated by our previous work \cite{Qian-Werner} on the decomposition of Brownian loop-soup clusters. In particular, in a loop-soup cluster with intensity $c=1$, the results of  \cite{Qian-Werner, qian2018} imply that the Brownian loops that touch the boundary of the cluster can be decomposed into a Poisson point process of Brownian excursions. One natural question is to find out how to hook them back into Brownian loops. The resampling property for Markov loop-soups at criticality proved in \cite{MR3618142} gives a heuristic indication that one should be able to exchange the trajectories of loops at random at double points of a critical Brownian loop-soup ($c=1$) without changing its global law. Therefore it is important to answer Question~\ref{Q1} when $c=1$ in order to evaluate  how much randomness is involved in the process of reconstructing the loops from the excursions. For other values $c\in(0,1)$, similar but weaker results have been obtained in \cite{MR3901648}. Ultimately we want to answer the same type of questions for all  $c\in(0,1]$.

It seems to us that the only available way of answering Question~\ref{Q1} is to compute the dimension of these double points explicitly. We plan to carry out this computation in two steps. In the present paper, we will define and compute  the generalized disconnection exponents, and give a mostly heuristic explanation of their relation to the Brownian loop-soups. Then in a forthcoming work \cite{Qian-future}, we will rigorously prove that they are indeed related to the dimension of multiple points on cluster boundaries, according to the usual relation. The second part of this work focuses on a different aspect, and is rather long and technical, hence we decided to separate it from the present article. 

Assuming the usual relation between the exponents and the dimensions, the results of the present article already allow us to predict the Hausdorff dimensions of the simple and double points on the cluster boundaries, and to predict that there are no triple points on the cluster boundaries. 
Note that there was previously no conjectures on the dimensions (or even existence, in general) of these points, since the introduction of Brownian loop-soups fifteen years ago.

In particular, we predict the following interesting phenomenon:
There exist double points on subcritical cluster boundaries ($c\in(0,1)$) and their dimension is strictly positive. However the dimension of  double points on the boundaries of the critical ($c=1$) loop-soup clusters is  zero. 
Moreover, this dimension is strictly decreasing and continuous in $c$. 
Unfortunately, this observation fails to completely answer Question~\ref{Q1}. It still leaves us with the following intriguing open question which seems to require new ideas.
\begin{openquestion} \label{oq}
Are there double points on the critical ($c=1$) loop-soup cluster boundaries?
\end{openquestion}

\subsection{Main result on generalized disconnection exponents}\label{intro:disc}
In this section, we will directly define the generalized disconnection exponents and present our main result about them.
An important part of the work is actually to find the right definition, but we will postpone this explanation to Section~\ref{S:loop-soup}.

Our definition of the generalized disconnection exponents relies on a certain general version of restriction measures that we now define:
Let $\Omega$ be the collection of all  simply connected compact sets $K\subset \overline\Ub$ such that $0\in K$ and $K\cap \partial \Ub=\{1\}$.
Let $\Qc$ be the collection of all compact sets $A\subset\overline\Ub$ such that $\Ub\setminus A$ is simply connected and $0,1\not \in A$. 
For all $A\in\Qc$, let $f_A$ be the conformal map from $\Ub\setminus A$ onto $\Ub$ that leaves $0,1$ fixed.

\begin{definition}[General radial restriction measure]\label{def:gen}
For $\kappa\in(0,4]$ related to $c$ by~\eqref{kappa-c}, a probability measure $\Pb$ on $\Omega$ (or a random set $K$ of law $\Pb$) is said to satisfy radial $\kappa$-restriction with exponents $(\alpha, \beta)\in\Rb^2$, if for all $A\in\Qc$, we have
\begin{align}\label{eq:gen}
\frac{d\Pb(K)}{d\Pb_A(K)}\mathbf{1}_{K\cap A=\emptyset}=\mathbf{1}_{K\cap A=\emptyset}|f_A'(0)|^\alpha f_A'(1)^\beta \exp\left(c(\kappa) \, m_{\Ub}(K,A)\right),
\end{align}
where $\Pb_A=\Pb\circ f_A$ and $m_{\Ub}(K,A)$ is the mass under the Brownian loop measure (defined in \cite{MR2045953}) of all loops in $\Ub$ that intersect both $K$ and $A$.
\end{definition}

Note that general restriction properties of this type involving masses of the Brownian loop measure have been discovered for SLEs (see for example \cite{MR1992830,MR2118865,MR2518970} and see Section~\ref{intro:restriction} for more discussion on restriction measures). 
Definition~\ref{def:gen} is a radial version of this property, with parameters $\alpha,\beta$. When $\kappa=8/3$, we have $c(\kappa)=0$ hence the Brownian loop term vanishes, so they coincide with the standard radial restriction measures studied in \cite{MR3293294}.
%We will also \red{state} later in Section~\ref{intro:restriction} that such $\kappa$-restriction measures do exist for a certain range of $\alpha,\beta$ (given explicitly in~\eqref{eq:thm_range}), and that 
% (see Theorem~\ref{intro-thm:restriction}) for 
%\begin{align}\label{range1}
%\alpha<\eta_\kappa(\beta), \quad \beta\ge (6-\kappa)/(2\kappa),
%\end{align}
%the set $K$ with law $\Pf_\kappa^{\alpha,\beta}$ a.s.\ contains the origin in its interior.

We remark that, given $\alpha, \beta\in\Rb$, it remains to be shown that~\eqref{eq:gen} uniquely determines the law of $\Pb$ (this was proved only for $\kappa=8/3$, but we believe it should be the case for all $\kappa\in(0,4]$). In this work (see Section~\ref{intro:restriction}), we will prove the existence of such measures for $(\alpha,\beta)$ in a certain range~\eqref{eq:thm_range}.
For each $\alpha, \beta$ in~\eqref{eq:thm_range}, we will explicitly construct (in Section~\ref{Sec:cons}) a measure which satisfies $\kappa$-restriction with exponents $(\alpha, \beta)$, and we denote it by $\Pf_\kappa^{\alpha,\beta}$. Moreover, if $K$ has law $\Pf_\kappa^{\alpha,\beta}$ with $(\alpha,\beta)$ in the range
\begin{align}\label{range1}
\alpha\le\eta_\kappa(\beta), \quad \beta\ge (6-\kappa)/(2\kappa),
\end{align}
then  (see Theorem~\ref{intro-thm:restriction}) $K$ a.s.\ contains the origin in its interior.

Before giving our definition of the generalized disconnection exponents, we recall that for any simply connected domain $D\subset \Ub$ such that $0\in D$, the conformal radius of $D$ seen from the origin is defined to be $|f'(0)|^{-1}$ where $f$ is any conformal map from $D$ onto $\Ub$ that leaves the origin fixed.

\begin{definition}[Generalized disconnection exponent]\label{intro:def-dis-3}
Fix $\kappa\in(0,4]$. For $\alpha,\beta$ in the range~\eqref{range1},
let $K$ be a radial restriction sample with law $\Pf^{\alpha,\beta}_\kappa$. Let $K_0$ be the connected component containing $0$ of the interior of $K$.
Let $p^R_\kappa(\alpha, \beta)$ be the probability that  the conformal radius of $K_0$ seen from the origin is smaller than $1/R$.
We define $\eta_\kappa(\alpha,\beta)$ to be the exponent such that as $R\to\infty$,
\begin{align}\label{p-R-3}
p^R_\kappa(\alpha,\beta)=R^{-\eta_\kappa(\alpha,\beta)+o(1)}.
\end{align}
For all $\beta\ge (6-\kappa)/(2\kappa)$, define the generalized disconnection exponent $\eta_\kappa(\beta):=\eta_\kappa(0,\beta)$.
\end{definition}

This definition makes sense thanks to our first main result that we now state:
\begin{theorem}\label{main-theorem}
For all $\alpha,\beta$ in the range (\ref{range1}) and for all $R>1$,  we  have that
\begin{align}\label{series}
p^R_\kappa(\alpha,\beta)=\sum_{n=0}^\infty a^n_\kappa(\alpha,\beta)\, R^{-\eta^n_\kappa(\alpha,\beta)},
\end{align}
where $(\eta^n_\kappa(\alpha, \beta))_{n\in \Nb}$ is a positive increasing sequence given by
\begin{align}\label{intro:kappa_ab}
\eta^n_\kappa(\alpha, \beta)=\left(n^2+n-\frac12 \right)\frac{\kappa}{8}-\frac{n-1}{2}-\frac{1}{\kappa}+\frac{\beta}{2}+\left(\frac18\left(n+\frac12 \right)-\frac{1}{4\kappa} \right)\sqrt{16\kappa\beta+(4-\kappa)^2}-\alpha
\end{align}
and
\vspace{-3mm}
\begin{align*}
a^n_\kappa(\alpha,\beta)=\prod_{l=0,l\not=n}^\infty (1-\eta^n_{\kappa}(\alpha,\beta)/\eta^l_\kappa(\alpha,\beta))^{-1}.
\end{align*}
In particular, for all $\kappa\in(0,4]$ the generalized disconnection exponent is given by
\begin{align}\label{thm:eta_kappa}
\eta_{\kappa}(\beta)=\eta_\kappa^0(0,\beta)=\frac{\left(\sqrt{16\beta\kappa+(4-\kappa)^2}- (4-\kappa) \right)^2-4\,(4-\kappa)^2}{32\kappa}.
\end{align}
\end{theorem}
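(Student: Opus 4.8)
The plan is to reduce the computation of $p^R_\kappa(\alpha,\beta)$ to a one-dimensional diffusion problem via a radial SLE description of the boundary of $K$. The first step is to identify the right-boundary curve: by the characterization of $\kappa$-restriction measures (to be established via the radial hypergeometric SLEs of Section~\ref{intro:SLE}), the boundary of a sample $K\sim\Pf^{\alpha,\beta}_\kappa$, traced from $1$ and targeting $0$, is a radial hypergeometric SLE$_\kappa(\mu,\nu)$ for a specific choice of $(\mu,\nu)$ determined by $(\alpha,\beta)$ through the restriction relation~\eqref{eq:gen}; matching the drift and the Brownian-loop-measure term fixes $\mu,\nu$ in terms of $\beta$ (with $\beta$ entering through the combination $\sqrt{16\kappa\beta+(4-\kappa)^2}$ that already appears in~\eqref{intro:kappa_ab}), while $\alpha$ contributes only an overall multiplicative weighting in $|f'(0)|$ and hence an additive shift in the exponent, explaining the clean $-\alpha$ term. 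Here $K_0$, the component of $\mathrm{int}(K)$ containing $0$, is exactly the connected component of the complement of this curve (together with its reflection, or equivalently the hull it bounds) that contains the origin, and its conformal radius decays as the curve winds toward $0$.

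The second step is the diffusion analysis. Parametrizing the radial Loewner flow by capacity $t$ so that the conformal radius of the domain seen from $0$ is $e^{-t}$, the event $\{\mathrm{crad}(K_0)<1/R\}$ becomes $\{T\le \log R\}$ where $T$ is the (possibly infinite) capacity time at which the origin is disconnected from $\partial\Ub$ by the curve — i.e. the swallowing time of the target. Under the radial hypergeometric SLE, the relevant marked-point process (the driving function relative to the force points) is a diffusion on a bounded interval, and $p^R_\kappa(\alpha,\beta)=\mathbf{P}[T\le \log R]$. Writing $u(x,t)$ for the probability of disconnection before capacity $t$ started from configuration $x$, $u$ solves a linear parabolic PDE $\partial_t u=\Lc u$ with $\Lc$ the generator of the diffusion; separating variables $u=\sum_n \phi_n(x)e^{-\eta^n t}$ reduces matters to the Sturm–Liouville eigenvalue problem $\Lc\phi_n=-\eta^n\phi_n$ with the appropriate (absorbing/reflecting) boundary conditions. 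The generator here is a hypergeometric (Jacobi-type) operator, so its eigenfunctions are Jacobi polynomials (or Gegenbauer polynomials after the relevant specialization), the eigenvalues $\eta^n_\kappa(\alpha,\beta)$ are the explicit quadratic-in-$n$ expression~\eqref{intro:kappa_ab}, and expanding the initial condition (a point mass / indicator coming from the starting configuration on $\partial\Ub$) in this eigenbasis produces the coefficients $a^n_\kappa(\alpha,\beta)$. The product formula for $a^n$ is the standard residue/partial-fraction evaluation of that eigenfunction expansion at the special starting point, exactly as in the Brownian case of \cite{MR1879850,MR1879851}.

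The third step is bookkeeping: verify that the series~\eqref{series} converges for every $R>0$ (the $\eta^n$ grow like $n^2\kappa/8$, so $a^n$ decays super-exponentially and the sum is entire in $R^{-1}$ up to the overall power), check positivity and monotonicity of the sequence $(\eta^n_\kappa(\alpha,\beta))_n$ within the range~\eqref{range1} (positivity of $\eta^0$ is where the hypothesis $\beta\ge(6-\kappa)/(2\kappa)$ and $\alpha<\eta_\kappa(\beta)$ are used), and extract~\eqref{thm:eta_kappa} by setting $n=0$, $\alpha=0$ and simplifying the radical. The leading term then dominates as $R\to\infty$, which also retroactively justifies Definition~\ref{intro:def-dis-3}.

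I expect the main obstacle to be the first step: correctly identifying that the boundary curve is a radial hypergeometric SLE with the precise parameters, and in particular proving that this SLE satisfies (and is characterized by) the general radial restriction property~\eqref{eq:gen}. This is where the new family of SLEs is genuinely needed — the ordinary radial SLE$_\kappa(\rho)$ family is too small to realize all $(\alpha,\beta)$ — and one must compute the Brownian-loop-measure term $m_\Ub(K,A)$ and match it against the Girsanov weight coming from the extra force point, which is the technical heart of the argument. Once the curve is pinned down, the diffusion computation, while lengthy, is routine Sturm–Liouville theory with explicit special functions.
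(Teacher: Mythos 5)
Your overall strategy is the same as the paper's: construct $K$ via the radial hypergeometric SLE of Section~\ref{ShSLE} with $(\mu,\nu)$ matched to $(\alpha,\beta)$ through the restriction property (the paper gets $\alpha=2\mu$ and $\beta=\tfrac1\kappa+\nu+2q_2(\kappa,\nu)$ in Proposition~\ref{prop-restriction}), observe that the capacity parametrization makes the conformal radius of $K_0$ equal to $e^{-T}$ with $T$ the hitting time of $\pi$ by the explicit one-dimensional diffusion $\theta_t$, and then obtain \eqref{series} from an eigenvalue expansion of the law of $T$. The paper packages the spectral step as Kent's theorem (Theorem~\ref{thm:kent}) for diffusion hitting times rather than as a separation-of-variables Sturm--Liouville problem, but these are the same computation: the eigenvalues $\lambda_n$ are the zeros in $\lambda$ of $f_\lambda(\pi)$, where $f_\lambda$ is a ratio of hypergeometric functions, and they occur exactly when the relevant hypergeometric solution degenerates (a Gamma factor in $C_2(a_\lambda,b_\lambda,c)$ blows up), which is your ``Jacobi polynomial'' condition. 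The product formula for $a^n$ is exactly the one in Kent's theorem.

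One concrete error: you identify the event $\{\mathrm{crad}(K_0)<1/R\}$ with $\{T\le\log R\}$, but with $g_t'(0)=e^{-t}$ the conformal radius of the component containing $0$ at the disconnection time is $e^{-T}$, so the event is $\{T>\log R\}$ --- a small conformal radius means the curve survives for a \emph{long} capacity time before closing up around the origin. Consequently the quantity to expand is the survival probability $\Pb(T>t)=\sum_n a_n e^{-\lambda_n t}$, not the distribution function $\Pb(T\le t)$, which tends to $1$ and admits no decaying expansion of the form you wrote. This is a sign slip rather than a structural flaw, but as written your second step computes $1-p^R_\kappa(\alpha,\beta)$. With that corrected, the remaining work is exactly the explicit identification of the zeros $\lambda_n$ (via $b_\lambda\in\Zb_-$ or $c-a_\lambda\in\Zb_-$) and the substitution of \eqref{alpha-beta}, which your outline correctly anticipates.
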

Let us make a few remarks about Theorem~\ref{main-theorem}:
\begin{itemize}

\item If we take $\kappa=8/3$, then we recover the Brownian disconnection exponents established in \cite{MR1879851}:
$$\eta(\beta)=\frac{1}{48} \!\left( \!\left(\sqrt{24\beta+1} -1 \right)^2-4 \right).$$ 

\item Note that (\ref{series}) is an exact series development, not just the asymptotics of the leading term as in~\eqref{p-R-3}.  It would be interesting to know whether the exponents given by~\eqref{intro:kappa_ab} have a physical meaning.

\item The analytic function $\eta_\kappa$ as in~\eqref{thm:eta_kappa} is well-defined for a wider range of $\alpha,\beta$ than~\eqref{range1}, but the geometric interpretation in terms of general restriction measures only holds for $\alpha,\beta$ in~\eqref{range1}.

\item For $\kappa$ and $c$ related by (\ref{kappa-c}), equation \eqref{thm:eta_kappa} can be equivalently written as a function of $c$
\begin{align}\label{eta-c}
\eta_{\kappa(c)}(\beta)=\frac{1}{48}\left(\left(\sqrt{24\beta+1-c}-\sqrt{1-c} \right)^2-4\,(1-c) \right).
\end{align}
Note that the above expression is strictly increasing in $c$ for all $c\in(-\infty,1]$. 

\item  We will explain in Section~\ref{S:loop-soup} that the dimensions of single and double points on the cluster boundaries of a loop-soup with intensity $c\in(0,1]$ are expected to be given by $2-\eta_{\kappa(c)}(2)$ and $2-\eta_{\kappa(c)}(4)$. This is the usual relation that is satisfied in the Brownian case \cite{MR1386292,MR2644878} and we plan to prove it in a future work \cite{Qian-future}. 
As a consequence of this prediction, by~\eqref{eta-c}, these dimensions should be strictly decreasing in $c$. 
In particular, we expect that for all $c<1$,  there exist double points on the cluster boundaries, since  $2-\eta_{\kappa(c)}(4)>0$. 
However, for $c=1$, we expect the dimension of double points on the cluster boundaries to be exactly equal to zero, since $\eta_4(4)=2.$
This leads to Question~\ref{oq}.

\item Finally, we remark that in \cite{MR1879851}, the so-called annulus crossing exponents for radial SLE$_\kappa$ were defined and computed.
It would be interesting to investigate their relation to the disconnection exponents considered in the present paper.
\end{itemize}

\subsection{Main result on general radial restriction measures}\label{intro:restriction}
In this section, we will present our results on the general radial restriction measures defined in Definition~\ref{def:gen}.

As we have mentioned, restriction  properties of the type~\eqref{eq:gen} which involve masses of the Brownian loop measure have been discovered for SLE curves, see for example \cite{MR1992830,MR2118865, MR2518970}.
The particular case of $\kappa=8/3$ (where the Brownian loop term vanishes)  was extensively studied in  \cite{MR1992830} where they focused on the natural and important chordal case. This was later followed by the works \cite{MR3293294} on the radial case and \cite{MR3827221} on the trichordal case. 
Definition~\ref{def:gen}, when $\kappa=8/3$, corresponds to the standard radial restriction measures \cite{MR3293294}.
However, the generalized family given by Definition~\ref{def:gen} has not been studied before.

The uniqueness of measures that satisfy Definition~\ref{def:gen} for each $(\alpha, \beta)$ remains to be proved (except for $\kappa=8/3$). In this work, we will prove the existence of such measures for a certain range of $(\alpha,\beta)$, by constructing these measures. This is enough for the purpose of the present work. More precisely, we have the following result.

%The main important unanswered questions about these measures are (i) to determine the range of $(\alpha,\beta)$ for which $\Pf_\kappa^{\alpha,\beta}$ exists, (ii) to construct all of them and (iii) to describe their geometric properties.
%Here is our main result:

\vspace{3mm}
\begin{figure}[h!]
\centering
\includegraphics[width=0.9\textwidth]{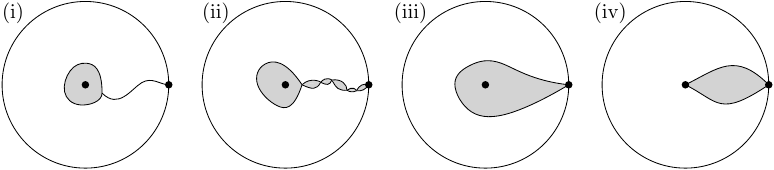}
\caption{Geometric properties of some of the cases in Theorem~\ref{intro-thm:restriction}.}
\label{fig:restriction}
\end{figure}
\vspace{-2mm}

\begin{theorem}\label{intro-thm:restriction}
For any $\kappa\in(0,4]$, and $\alpha, \beta$ in the following range where $\eta_\kappa$ is the $\kappa$-disconnection exponent given by~\eqref{thm:eta_kappa}
\begin{align}\label{eq:thm_range}
\alpha\le\eta_\kappa(\beta), \quad \beta\ge (6-\kappa)/(2\kappa),
\end{align}
there exists a measure $\Pf_\kappa^{\alpha, \beta}$ which satisfies  radial $\kappa$-restriction as in Definition~\ref{def:gen} with parameters $(\alpha,\beta)$.
Moreover, if $K$ is a sample with law $\Pf_\kappa^{\alpha, \beta}$, then it satisfies the following geometric properties:
\begin{enumerate}[(i)]
\item If $\beta=(6-\kappa)/(2\kappa)$, then $K$ is a.s.\ a simple curve in the neighborhood of $1$, see Figure~\ref{fig:restriction}(i). If in addition $\alpha=\eta_\kappa(\beta)$, then $K$ is just a radial SLE$_\kappa$ curve (which is a simple curve from $1$ to $0$).
\item If  $\beta\in ((6-\kappa)/(2\kappa),  (12-\kappa)(\kappa+4)/(16\kappa))$, then $K$ is a.s.\ not a simple curve, but has cut-points, see Figure~\ref{fig:restriction}(ii).
\item If $\beta\ge  (12-\kappa)(\kappa+4)/(16\kappa)$, then $K$ a.s.\ does not have cut-points, see Figure~\ref{fig:restriction}(iii).
\item If $\alpha=\eta_\kappa(\beta)$, then $K$ a.s.\ has the origin on its boundary, see Figure~\ref{fig:restriction}(iv).
\end{enumerate}
\end{theorem}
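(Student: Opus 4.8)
The plan is to construct the measures $\Pf_\kappa^{\alpha,\beta}$ explicitly as laws of the filling of a suitable random curve, using the radial hypergeometric SLEs announced in the introduction, and then to read off the geometric properties from the behaviour of that curve near its endpoints. First I would treat the extremal case $\alpha=\eta_\kappa(\beta)$, $\beta=(6-\kappa)/(2\kappa)$: here the candidate is simply the radial SLE$_\kappa$ curve from $1$ to $0$, and the restriction identity \eqref{eq:gen} should follow from the known loop-measure version of the restriction property for radial SLE$_\kappa$ (the sources \cite{MR1992830,MR2518970} cited in the text), together with the standard computation of how $|f_A'(0)|$ and $f_A'(1)$ enter via the SLE$_\kappa$ parameters; one checks the exponents match \eqref{intro:kappa_ab} at $n=0$. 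Next, for general $\beta\ge (6-\kappa)/(2\kappa)$ with $\alpha=\eta_\kappa(\beta)$, the boundary curve of $K$ should be a radial hypergeometric SLE$_\kappa(\mu,\nu)$ with $\mu,\nu$ chosen so that its one-sided restriction exponent at $1$ is $\beta$ and its behaviour at $0$ produces $\alpha=\eta_\kappa(\beta)$; verifying \eqref{eq:gen} is then a martingale computation: one shows that $|f_A'(0)|^{\alpha} f_A'(1)^{\beta}\exp(-c(\kappa)m_{\Ub}(K_t,A))$, evaluated along the growing hull $K_t$, is a local martingale under the hypergeometric SLE, which forces the parameter identifications and pins down the admissible range. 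Finally, to get $\alpha<\eta_\kappa(\beta)$ one ``adds bubbles'': attach to the hypergeometric-SLE boundary curve an independent Poissonian cloud of restriction-type bubbles (Brownian-loop-soup-style excursions) along the curve with the appropriate intensity; this lowers $\alpha$ continuously down from $\eta_\kappa(\beta)$ while preserving \eqref{eq:gen}, exactly as in the $\kappa=8/3$ theory of \cite{MR3293294}, and simultaneously moves the origin from the boundary into the interior, giving item (iv) and its complement.

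For the geometric statements (i)--(iii) I would localize near the marked point $1$ and analyze the hypergeometric SLE there. Near $1$ the hull $K$ looks, after conformal mapping, like a chordal-type hypergeometric SLE$_\kappa$ started from $1$, and whether it is a simple curve, a curve with cut-points, or a curve with no cut-points is governed by the local force-point weights $\mu,\nu$ — the same phenomenon as for SLE$_\kappa(\rho)$, where the thresholds for hitting the boundary versus having/lacking cut-points are explicit algebraic conditions on $\rho$. Translating those thresholds through the dictionary between $(\mu,\nu)$ and $\beta$ yields precisely the break-points $\beta=(6-\kappa)/(2\kappa)$ and $\beta=(12-\kappa)(\kappa+4)/(16\kappa)$. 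Concretely: at $\beta=(6-\kappa)/(2\kappa)$ the weight at $1$ equals the SLE$_\kappa(\kappa-6)$-type value making $K$ boundary-filling to one side only, hence a simple curve near $1$; for $\beta$ just above, $K$ touches the boundary near $1$ but the trace still has cut-points, which one sees from the disconnection/swallowing behaviour of the force points; and once $\beta$ exceeds $(12-\kappa)(\kappa+4)/(16\kappa)$ the curve swallows neighbourhoods of $1$ fast enough that cut-points disappear almost surely. The existence of cut-points should be established by a second-moment / Borel--Cantelli argument along a geometric sequence of scales, using the restriction property to compute the relevant two-point function, while their a.s.\ absence follows from a first-moment estimate; both reduce to comparing $\eta_\kappa$-type exponents at the relevant arguments.

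The main obstacle I expect is the construction and martingale analysis of the radial hypergeometric SLEs: one must identify the correct hypergeometric weight function (a ratio of hypergeometric functions solving the relevant second-order ODE) so that the driving SDE is well-posed up to the disconnection time, and then verify that the process $|f_{A_t}'(0)|^{\alpha} f_{A_t}'(1)^{\beta}\exp(-c(\kappa)m_{\Ub}(\,\cdot\,,A))$ is genuinely a local martingale with no boundary terms — this is where the precise form \eqref{intro:kappa_ab} of the exponents, and in particular the square-root term $\sqrt{16\kappa\beta+(4-\kappa)^2}$, is forced, and it is delicate because the loop-measure term $m_\Ub(\cdot,A)$ contributes a nontrivial drift that must cancel against the It\^o correction and the $\alpha,\beta$-derivative terms. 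A secondary difficulty is showing that the bubble-addition procedure stays within $\Omega$ (the resulting set remains simply connected with $K\cap\partial\Ub=\{1\}$) and that it exhausts exactly the range $\alpha\le\eta_\kappa(\beta)$ with no gap; for the sharpness of the upper bound $\alpha\le\eta_\kappa(\beta)$ one needs a separate argument showing that $\alpha>\eta_\kappa(\beta)$ is impossible, which should follow from Definition~\ref{intro:def-dis-3} together with Theorem~\ref{main-theorem}, since the series \eqref{series} would otherwise have a non-summable leading behaviour.
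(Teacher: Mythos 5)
Your plan for the extremal case $\alpha=\eta_\kappa(\beta)$ is essentially the paper's (radial SLE$_\kappa$, resp.\ a radial SLE$_\kappa(\rho)$ completed by a chordal SLE$_\kappa(\rho-2)$ in its complement, with the restriction identity verified by an It\^o/martingale computation). But the step you use to reach $\alpha<\eta_\kappa(\beta)$ — attaching an independent Poissonian cloud of loops/bubbles to lower $\alpha$, ``exactly as in the $\kappa=8/3$ theory'' — is precisely the step that fails for $\kappa\neq 8/3$, and the paper is explicit about this. The obstruction is the loop-measure term in \eqref{eq:gen}: for two independent sets $K_1,K_2$ each satisfying $\kappa$-restriction, multiplying their Radon--Nikodym derivatives produces $\exp(-c(\kappa)(m_\Ub(K_1,A)+m_\Ub(K_2,A)))$, whereas the union would need $\exp(-c(\kappa)\,m_\Ub(K_1\cup K_2,A))$; these differ by the mass of loops hitting $K_1$, $K_2$ and $A$ simultaneously, which vanishes only when $c(\kappa)=0$, i.e.\ $\kappa=8/3$. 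The paper instead lowers $\alpha$ by changing the curve itself: the parameter $\mu$ (with $\alpha=2\mu$) enters the hypergeometric weight $G$, and for $\alpha<\eta_\kappa(\beta)$ (equivalently $b\neq 0$) the radial hSLE no longer terminates at the origin but makes a full counterclockwise loop around it before hitting itself or the boundary arc; $K$ is the region enclosed by this loop (plus, for intermediate $\beta$, a second SLE$_\kappa(\rho)$ arc). This single change also delivers item (iv) and its complement for free: the origin lies on $\partial K$ exactly in the degenerate case $b=0$, and in the interior otherwise.

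Two secondary points. First, the candidate martingale you write down, $|f_{A_t}'(0)|^{\alpha}f_{A_t}'(1)^{\beta}\exp(-c(\kappa)m_\Ub(\cdot,A))$, is not a local martingale as stated: the correct one (Lemma~\ref{lem:martingale}) carries in addition the derivative $|h_t'(e^{iV_t})|^{\nu}$ at the second marked point and the ratio $G(\nu_t)/G(\theta_t)$ of the hypergeometric weight evaluated at the image and preimage angles; without these factors the drift does not cancel, and the cancellation is exactly what forces $G$ to solve the ODE \eqref{diff-G}. Second, the geometric dichotomy in (i)--(iii) does not require any second-moment/Borel--Cantelli analysis of cut-points: it is read off directly from the a.s.\ topological behaviour of the constructed curves (Proposition~\ref{prop:hsle_geom}) — the threshold $\beta=(12-\kappa)(\kappa+4)/(16\kappa)$ corresponds to $\nu=1/2-\kappa/16$, above which the hSLE closes up by returning to its starting point (no cut-points) and below which it self-hits (producing cut-points). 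Your moment-method route is not obviously wrong, but it is solving a problem the construction already answers.
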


We believe that \eqref{eq:thm_range} is the maximal range for which radial $\kappa$-restriction measures with parameters $(\alpha, \beta)$ exist. This was proved to be the case for $\kappa=8/3$ \cite{MR3293294}, but it remains to be proved for other $\kappa\in(0,4]$.

We recover the results of \cite{MR3293294} when $\kappa=8/3$. However, our method differs from the one used in \cite{MR3293294}: 
\begin{itemize}
\item In \cite{MR3293294},  to construct radial restriction measures with parameters $\alpha,\beta$ such that $\alpha<\eta(\beta)$, Wu takes the union of a radial restriction measure of parameter $(\eta(\beta), \beta)$ with an independent Poisson point process of Brownian loops surrounding the origin of intensity $\eta(\beta)-\alpha$ (the filled set of which satisfies radial restriction with parameters $(\alpha-\eta(\beta), 0)$). 
\item The method above cannot be applied to $\kappa\not=8/3$, since the union of two independent $\kappa$-restriction measures no longer satisfies $\kappa$-restriction. Therefore, we have taken another route which is to construct the outer boundaries of the restriction measures directly using a certain variant of radial SLE curves that we will introduce in Section~\ref{intro:SLE}. In particular, our result applied to $\kappa=8/3$ also gives an SLE description of the outer boundaries of the standard radial restriction measures constructed in \cite{MR3293294}.
\end{itemize}

Let us mention that in Section~\ref{sec:further}, we will also construct the chordal and trichordal versions of general restriction measures. The construction in these two cases is much more straightforward than in the radial case, hence we will put it as a side remark at the end.

\subsection{Main results on radial hypergeometric SLE}\label{intro:SLE}
In this section, we will present our main results on a new family of radial SLEs, that we call radial hypergeometric SLE. This family depends on $\kappa$ and two additional parameters $\mu$ and $\nu$. 
It is a natural but substantial generalization of the family of radial SLE$_\kappa(\rho)$s.
It is the main tool to construct the general radial restriction measures and to derive  the exponents in Theorem~\ref{main-theorem}.

Since the invention of SLE by Schramm in \cite{MR1776084}, many variants of SLE have been introduced. 
The most common variants are chordal and radial SLE$_\kappa(\rho)$s (see \cite{MR1992830}). These SLEs depend on one additional marked point on the boundary, apart from the starting and ending points of the curve.
In the chordal setting, more complicated variants of SLE involving hypergeometric functions have appeared in the study of \emph{multiple SLEs} (it is a system of $n$ curves so that each curve has the law of an SLE in the complement of the other $n-1$ curves). There are numerous works on this subject, see for example \cite{MR2358649, MR2187598, MR2310306}. Hypergeometric functions have also appeared in the driving function of \emph{intermediate SLEs}, which were introduced by Zhan in \cite{MR2646499}.
This family depends on $\kappa$ and an additional parameter $\rho$. These SLEs depend on two marked points on the boundary (in addition to the starting and ending points of the curve). They were shown in \cite{MR2646499} to be the time reversal of chordal SLE$_\kappa(\rho)$s as one of its marked points tends to the starting point.

The family of (chordal) hypergeometric SLEs considered in \cite{MR3827221} depends on $\kappa$ and three additional parameters, and are denoted there by hSLE$_\kappa (\mu,\nu, \lambda)$. These SLEs, like intermediate SLEs, also depend on two marked points on the boundary (in addition to the starting and ending points of the curve). The family of chordal hSLEs strictly contains the family of intermediate SLEs, since it has two more parameters. The family of intermediate SLEs further contains as a special case  the marginal law of one SLE in the configuration of multiple SLEs with $n=2$.  The family of hypergeometric SLEs also contains the family of chordal SLE$_\kappa(\rho_1, \rho_2)$ with two marked points on the boundary.

The same terminology of hypergeometric SLE (and the same notation hSLE) was later used in \cite{MR4072221} and in a series of preprints \cite{Wu1, Wu2, Wu3}. 
However, the family considered there is identical to the intermediate SLEs considered in \cite{MR2646499}, except that Wu defined this family for $\kappa\in(0,8)$ (Zhan only considered the case $\kappa\in(0,4]$). %More precisely, hSLE$_\kappa(\nu)$ (in the notation of \cite{MR4072221}) is identical to the intermediate SLE$(\kappa,\rho)$ for $\rho=\nu+2$.
In \cite{MR4072221}, the relation of this family to previously-known families of SLEs was not explained, apart from claiming that the SLEs in  \cite{MR2646499} was ``a particular case'' of the family considered in \cite{MR4072221}, and the SLEs in \cite{MR3827221} are ``different'' from those in \cite{MR4072221} (without explaining why, see \cite[end of Section 1]{MR4072221}).  
In \cite[two lines above Section 1.1]{Wu3}, the authors claimed that the family of hypergeometric SLEs was introduced by Wu in \cite{MR4072221}.\footnote{We wish to make this clarification here, because the inappropriate use of this terminology has caused confusion. In particular, some people seemed to believe that the hypergeometric SLEs considered in \cite{MR3827221} were also nothing else but intermediate SLEs.}
Let us also mention that many properties of intermediate SLEs, such as reversibility and a certain ``coupling''  (developed in \cite{MR2435856,MR2439609}, and closely related to ``commutation relations'' in~\cite{MR2358649}), were established in  \cite{MR2646499} (for $\kappa\in(0,4]$), but results of similar type were proved in \cite{MR4072221} (for $\kappa\in(0,8)$) with no reference to  \cite{MR2646499}.

\begin{figure}[h!]
\centering
\includegraphics[width=0.8\textwidth]{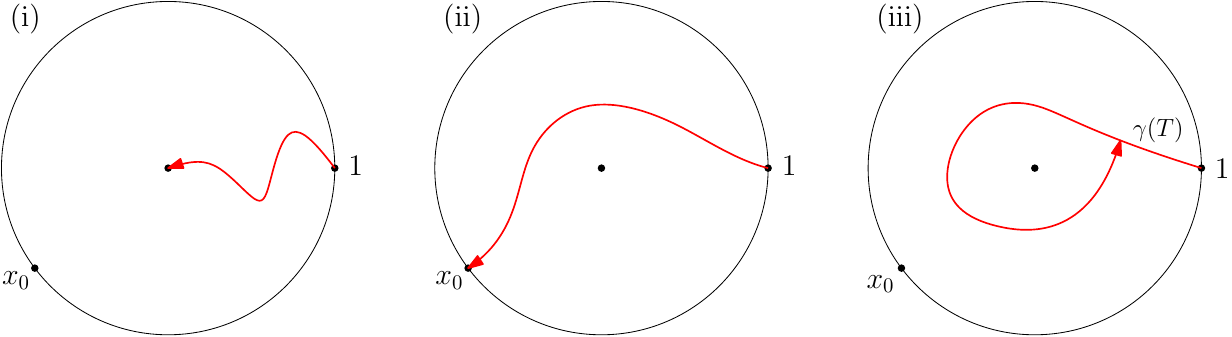}
\caption{Three possible behaviors of radial hSLE}
\label{fig:hsle1}
\end{figure}

In the present work, we will introduce the radial counterpart of the hypergeometric SLEs defined in \cite{MR3827221}.
To our knowledge, the only previously-known sub-family of the radial hSLEs is radial SLE$_\kappa(\rho)$.
The radial hSLEs are SLE curves in a simply connected domain starting from one boundary point and targeting  one interior point with one additional  marked point on the boundary. Their driving function is a Brownian motion with a drift term involving hypergeometric functions. We will denote them as hSLE$_\kappa(\nu, \mu)$.

The definition of the radial hSLEs  will involve quite complicated equations, so we will postpone this to Section~\ref{ShSLE}. We will also state precise results about radial hSLEs in Section~\ref{ShSLE}.
Here in the introduction, we would mention some of their interesting geometric properties, see Figure~\ref{fig:hsle1} (stated in Proposition~\ref{prop:hsle_geom}). In some cases, the radial hSLE curve forms a particular shape of a loop surrounding the origin, despite being a simple curve elsewhere. This specific behavior is, to our knowledge, different from any other known types of SLE's. 

More precisely, for $\kappa\in(0,4]$, consider a radial hSLE$_\kappa(\mu, \nu)$ in $\Ub$ starting at $1$, targeting $0$, with a marked point $x_0$. Depending on its parameters $\mu$ and $\nu$, it can have three different behaviors: 
\begin{enumerate}[(i)]
\item It is a.s.\ a simple curve from $1$ to $0$, in which case it is just a SLE$_\kappa(\rho)$. 
\item It is a.s.\ a simple curve from $1$ to $x_0$ which almost surely leaves $0$ on its left.
\item It a.s.\ surrounds the origin counterclockwisely before coming back and hitting itself or the counterclockwise arc from $x_0$ to $1$. In particular, if we place $x_0$ immediately below $1$, then the curve will a.s.\ come back hitting itself, forming a loop around the origin.
\end{enumerate}
The case (i) is in some sense the degenerate limiting case, and consists of a two-parameter sub-family of the three-parameter family of radial hSLEs. However, the cases (ii) and (iii) are generic and both depend on three parameters.

A major difference between the generic radial hSLEs (cases (ii) and (iii))  with radial SLE$_\kappa(\rho)$s (case (i)) is that the generic radial hSLEs are \emph{not} left-right symmetric: Let $\Sc$ be the mirror symmetry map from the unit disk onto itself with respect to the real axis. Let $\gamma$ a hSLE from $1$ to $0$ with a marked point $x_0$. Then $\Sc(\gamma)$ is \emph{not} equal in distribution to a hSLE from $1$ to $0$ with the marked point $\Sc(x_0)$, unless $\gamma$ is an SLE$_\kappa(\rho)$.

Let us point out an interesting special example of radial hSLE which illustrates this asymmetry: a chordal SLE$_\kappa$ conditioned to pass to the right of some fixed interior point, seen as a radial SLE towards that point. This example belongs to the case (ii) above. The probability that a chordal SLE$_\kappa$ passes to the right of a fixed interior point has been computed by Schramm in \cite{MR1871700}. The fact that such a conditioned chordal SLE$_\kappa$ is an instance of radial hSLE can be deduced (with some extra work) from the computations in \cite{MR1871700}. We leave this proof to the interested reader.

\subsection{Outline of the paper}
The present paper will be organized as follows.

In Section~\ref{S:loop-soup}, we will recall some background and explain the heuristic relation between the generalized disconnection exponents and the Brownian loop-soups.
We emphasize that this section is \emph{not} needed to understand the rest of this article. However, it explains the physical meaning of the present work and its relation to our future work \cite{Qian-future}.

In Section~\ref{ShSLE}, we will define the three-parameter family of radial hypergeometric SLEs. 
An important part of the work was in fact to find these SLEs. However, to keep the presentation concise,  we will in practice directly give the equations of these SLEs and then analyse their properties. 

In Section~\ref{Sec:cons}, we will construct the radial general restriction measures using the radial hSLEs, proving Theorem~\ref{intro-thm:restriction}. The strategy of this part is similar to that of \cite{MR1992830}.

In Section~\ref{sec-disconnection}, we will prove Theorem~\ref{main-theorem}, and consequently deduce the values of the generalized disconnection exponents. We will mainly analyse an explicit diffusion process associated to the hSLEs, using classical diffusion theory and a result of \cite{MR576891}.

In Section~\ref{sec:further}, we will make some remarks on the generalization of Brownian intersection exponents, and construct the chordal and trichordal versions of general restriction measures.

Finally, we will postpone some lengthy computations for SLE to Appendix~\ref{A1}.

\subsection*{Acknowledgements}
This work was done while the author was affiliated to the University of Cambridge. The author acknowledges the support by EPSRC grant EP/L018896/1 and a JRF of Churchill college. 
We thank  Pierre Nolin and Wendelin Werner for useful comments on an earlier version of this article.
We thank Bertrand Duplantier for pointing out the connection between the exponents \eqref{tilde-xi-c}, \eqref{xi-c}  and \cite{MR2112128, MR2581884}. We also thank the anonymous referee for reading this article carefully and for many valuable comments.

\section{Relation between the generalized disconnection exponents and loop-soups}\label{S:loop-soup}
In this section, we are going to explain the physical meaning behind the generalized disconnection exponents given by Definition~\ref{intro:def-dis-3}. 
In particular, we will recall some related background and point out the heuristic relation (which we plan to rigorously establish in \cite{Qian-future})  between the generalized disconnection exponents with parameter $\kappa\in(8/3,4]$ and the dimensions of simple and double points on the cluster boundaries in a Brownian loop-soup of intensity $c(\kappa)$.

\subsection{Background on the Brownian disconnection exponents}\label{sec:background}
The Brownian disconnection exponents are defined as follows: Consider $n$ independent Brownian motions started at $n$ uniformly chosen points on the unit circle $\partial \Ub$, and stopped upon reaching the boundary $\partial B(0,R)$ of the ball with radius $R$ around $0$.  
We say that a set $K\subset \Cb$ disconnects $0$ from $\infty$ if $0$ and $\infty$ are not in the same connected component of $\Cb\setminus K$.
Let $p^R_n$ be the probability that the union of the $n$ stopped Brownian motions does not disconnect $0$ from $\infty$.
Then by scale invariance and the Markov property of Brownian motions, it is easy to see that for any $R,S>1$, we have
\begin{align*}
p_n^{RS} \le p_n^R \cdot p_n^S.
\end{align*}
This implies, by subadditivity, that $\log p_n^R / \log R$ converges as $R$ goes to $\infty$. The disconnection exponent $\eta(n)$ for $n\in\Nb^*$ (where $\Nb^*$ denotes the set of all positive integers) is then defined to be
\begin{align}\label{dis-br}
\eta(n)= - \lim_{R\to \infty} \log p_n^R / \log R.
\end{align} 
The exponent $\eta$ was then further extended to non-integer arguments \cite{MR1742883}, leading to a continuous one-parameter family $\eta(\beta)$.
The value of $\eta(\beta)$ was determined in \cite{MR1879851} (also see the related works \cite{MR1742883, MR1879850, MR1899232, MR1961197}). 

The Brownian disconnection exponents are directly related to the dimension of the outer boundary of a Brownian motion or of the double points on the outer boundary. Here is a heuristic explanation: A point is on the boundary of a Brownian motion  means that the past and future parts of the Brownion motion (which are two conditionally independent Brownian motions) do not disconnect it from $\infty$.
Therefore (see \cite{MR1386292} for more details), the Hausdorff dimension of the Brownian frontier  is $2-\eta(2)$. Similarly, if a point on the Brownian frontier is visited twice, then there need to be four Brownian arms starting from that point, hence the dimension of double points on the Brownian frontier is 
 $2-\eta(4)$ (\cite{MR2644878}). Finally, there is no triple points on the Brownian frontier because $2-\eta(6)<0$ (\cite{MR1387629}).

\subsection{Relation between Brownian motions and restriction measures}\label{ss:res}
It is not immediately clear that the exponents considered in Section~\ref{sec:background} should coincide with the ones given by Definition~\ref{intro:def-dis-3} (when $\kappa=8/3$), but Theorem~\ref{main-theorem} implies that they have the same values. In this section, we are going to give heuristic arguments which explain why it should be the case.

It was pointed out in \cite{MR1992830} that Brownian motions satisfy a certain \emph{conformal restriction} property, which refers to the combination of conformal invariance and the following property: Loosely speaking, if we condition a certain Brownian trace in a given domain to stay in a subdomain, then its law is the same as if we directly sample a Brownian motion in this subdomain.
Restriction measures have first been studied by Lawler, Schramm and Werner in \cite{MR1992830} who focused on the chordal case. This was later followed by the study of the radial case  \cite{MR3293294} and of the trichordal case \cite{MR3827221}. In all three cases, these measures are characterized by a few (respectively one, two, or three) real parameters.

More precisely, let $\Bf$ be a Brownian motion started from the origin and stopped upon hitting $\partial \Ub$ at time $T$.
Let $J$ be the rotated (around the origin) trace of $\Bf([0,T])$ so that $J\cap \partial \Ub=\{1\}$. Let $K$ be the filling of $J$, namely the complement of the unbounded connected component of $\Cb\setminus J$.
Then $K$ satisfies the radial restriction property  with parameters $(0,1)$ at the marked points $0$ and $1$ in $\Ub$. In other words, its law is determined by the following property: For all $A\in\Qc$, we have
\begin{align*}
\Pb(K\cap A=\emptyset)=f_A'(1),
\end{align*}
where (we recall that) $\Qc$ and $f_A$ have been defined just before Definition~\ref{def:gen}.

This suggests that we can also relate the Brownian disconnection exponents to radial restriction measures.
For example, let $\Bf$ be a Brownian motion starting at $1$ and exiting $\partial B(0,R)$ at time $T$. The probability $p_1^R$ changes at most by a multiplicative constant if we condition on $\Bf(T)=R$. (To deduce this fact, one can for example decompose the trajectory of $\Bf$ at the last time $\tau$ that it reaches $\partial B(0,R/2)$, see \cite{MR2045953}. Then  $B([\tau, T])$ has a positive probability of not contributing to the event of disconnection, uniformly on the position of $B(T)$).
We then condition on $\Bf(T)=R$ and let  $\varphi$ be the conformal map from $B(0,R)$ onto $\Ub$ which maps $1, R$ to $0,1$. 
By conformal invariance of Brownian motion, the filling of $\varphi(\Bf([0,T]))$ is a radial restriction sample with parameters $(0,1)$ at the marked points $0$ and $1$ in $\Ub$. 
Moreover, $\varphi(0)\in \Rb^-$ and $|\varphi(0)|$ is of order $1/R$.
This leads to an alternative and equivalent definition of $\eta(1)$: Let $K$ be a radial restriction measure in $\Ub$ with parameters $(0,1)$ at the marked points $0$ and $1$. Let $\wt p^R$ be the probability that $K$ does not disconnect $-1/R$ from $\infty$. Then define
\begin{align}\label{dis-res}
\eta(1)= - \lim_{R\to \infty} \log \wt p^R / \log R.
\end{align}

For $n\in \Nb_{\ge 2}$,  the union of $n$ independent Brownian motions started at $0$ and conditioned to exit $\partial \Ub$ at $1$ (which can be achieved by rotation) satisfies radial restriction with parameters $(0,n)$.
This suggests that $\eta(n)$ could be similarly defined  in terms of a radial restriction measure of parameters $(0,n)$. However, the $n$ Brownian motions in the original definition (given in Section~\ref{sec:background}) all start at different points on the unit circle. It involves some additional difficulty to establish the equivalence between the original definition and a definition using restriction measures, because it turns out to be more complicated to estimate the disconnection probability when we change the positions of the starting points of the Brownian motions than when we change their exiting points.
\smallskip

We remark that Definition~\ref{intro:def-dis-3} (applied to $\kappa=8/3$ and $n=1$) 
is very similar to the definition given by~\eqref{dis-res}, except that we considered in Definition~\ref{intro:def-dis-3} the event that the conformal radius of $K$ is $\le 1/R$, instead of the event that $K$ does not disconnect $-1/R$ from $\infty$.
However, these two events are within constant multiplicative factors from each other, hence Definition~\ref{intro:def-dis-3} and~\eqref{dis-res} indeed yield the same exponent. This can for example be proven by Koebe $1/4$ theorem and basic estimates on SLE, but we do not plan to carry it out here.
One reason in favour of considering the probability that the conformal radius of $K$ is small, rather than $\wt p^R$, is that the first quantity is invariant w.r.t.\ rotation of $K$.

\subsection{On the generalized disconnection exponents}
In this section, we will comment on Definition~\ref{intro:def-dis-3}.
Before that, we first introduce another way of defining the generalized disconnection exponents which only involves loop-soups and Brownian motions. This definition is analogous to the  definition of the Brownian disconnection exponents in Section~\ref{sec:background}. 

\begin{figure}[h!]
\centering
\includegraphics[width=0.38\textwidth]{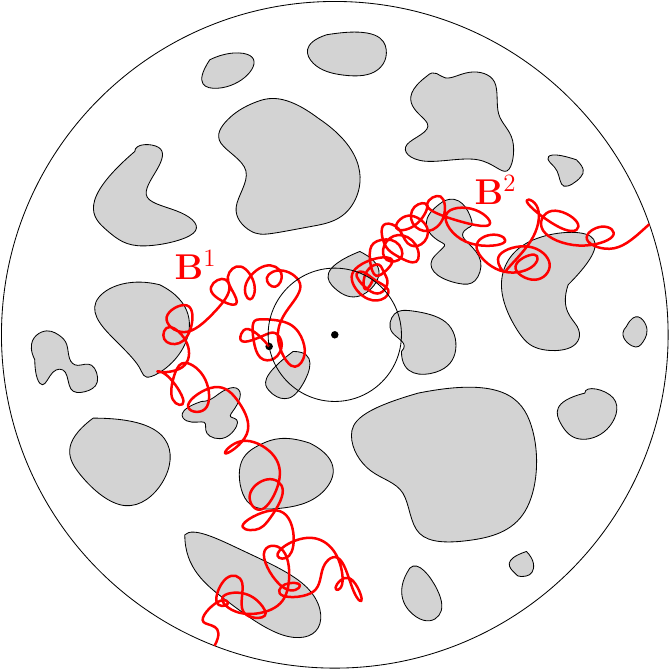}
\caption{We illustrate the event (for $n=2$) that the union of two Brownian motions together with the loop-soup $\Lambda_{R,1}$ does not disconnect the origin from $\infty$.
The two Brownian motions are depicted in red and the filled ourtermost clusters of $\Lambda_{R,1}$ are depicted in grey.}
\label{fig:loop-soup}
\end{figure}

Throughout, we fix $\kappa\in(8/3, 4]$ and $c\in(0,1]$ related to each other by~\eqref{kappa-c}.
Let $\Lambda$ be a Brownian loop-soup of intensity $c$ in the whole plane.
For all $R>S$, let $\Lambda_{R,S}$ be the collection of all the loops in $\Lambda$ which are contained in $B(0,R)$, but not contained in $B(0,S)$. 
Consider $n$ independent Brownian motions started at $n$ uniformly chosen points on $\partial B(0,S)$ and stopped upon reaching  $\partial B(0,R)$. 
Let $p(c,n, R, S)$ be the probability that the union of the $n$ stopped Brownian motions together with all the loops in $\Lambda_{R,S}$ does not disconnect $0$ from $\infty$. See Figure~\ref{fig:loop-soup}.
By scale invariance and the Markov property of the Brownian motions and the loop-soup, we have that for all $R_1, R_2>1$,
\begin{align*}
p(c,n,R_1R_2, 1) \le p(c, n, R_1, 1) \, p(c,n, R_2, 1).
\end{align*}
By subadditivity, this implies that $\log p(c,n,R,1)/ \log R$ converges as $R\to\infty$.
We can then define the generalized disconnection exponent of $n$ Brownian motions in a loop-soup of intensity $c$ to be the following
\begin{align}\label{lim:gen}
-\lim_{R\to \infty} \log p(c,n, R,1) / \log R.
\end{align}

This way of defining the disconnection exponents makes it intuitive that they should be related to the dimensions of simple and double points on the boundaries of loop-soup clusters, since one can apply almost the same reasoning as for the standard Brownian case (as we have explained at end of Section~\ref{sec:background}).
Nevertheless, rigorously deriving these dimensions is rather technical (similar to \cite{MR1386292,MR2644878}) and will be done in a separate work \cite{Qian-future}.

Similarly to Section~\ref{ss:res}, we will now relate these generalized exponents to general restriction measures. 
The relation comes from the following observation, which is the radial analogue of the chordal result in  \cite{MR3035764} (and can be proven using similar ideas):
\begin{lemma}\label{lem:obs}
Let $K$ be a standard radial restriction sample in $\Ub$ with marked points $0,1$ and exponents $(\alpha, \beta)$. Let $\Cc(K)$ be the union of $K$ with all the clusters that it intersects in an independent loop-soup of intensity $c(\kappa)$ in $\Ub$.
Then the filling of $\Cc(K)$ satisfies radial $\kappa$-restriction of the type~\eqref{eq:gen} with exponents $(\alpha, \beta)$.
\end{lemma}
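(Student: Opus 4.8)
The plan is to verify directly that the filling of $\Cc(K)$, which we denote $\wh{\Cc(K)}$, satisfies the defining relation~\eqref{eq:gen} of the general radial $\kappa$-restriction measure with exponents $(\alpha,\beta)$ and $c=c(\kappa)$; since by Definition~\ref{def:gen} this relation characterizes the law, this yields $\wh{\Cc(K)}\sim\Pf_\kappa^{\alpha,\beta}$. We adapt the chordal argument of~\cite{MR3035764}. Fix $A\in\Qc$, write $D_A=\Ub\setminus A$ (simply connected, containing $0$ and $1$), and let $\Lambda$ be the intensity-$c$ Brownian loop-soup in $\Ub$ used to build $\Cc(K)$, which is independent of $K$. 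The first step is to split $\Lambda$ using the restriction property of the Brownian loop measure~\cite{MR2045953}: the loops of $\Lambda$ contained in $D_A$ form a Poissonian sub-collection $\Lambda^{(A)}$, which is a loop-soup of intensity $c$ in $D_A$, and it is independent of the collection $\Lambda^{*}_A$ of the loops that meet $A$, itself a Poisson point process with intensity $c$ times the Brownian loop measure in $\Ub$ restricted to loops meeting $A$.

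The second step is a topological reduction. Let $\Cc_0$ be the cluster of $K$ built using only the loops of $\Lambda^{(A)}$ (that is, $K$ together with every loop of $\Lambda^{(A)}$ joined to $K$ by a finite chain of pairwise intersecting loops of $\Lambda^{(A)}$), and let $\wh{\Cc_0}$ be its filling. A key point is that, on $\{K\cap A=\emptyset\}$,
\begin{align*}
\{\wh{\Cc(K)}\cap A=\emptyset\}=\{\text{no loop of }\Lambda^{*}_A\text{ meets }\Cc_0\},
\end{align*}
that $\wh{\Cc(K)}=\wh{\Cc_0}$ on this event, and that $\wh{\Cc(K)}\cap A\neq\emptyset$ when $K\cap A\neq\emptyset$. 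The inclusion ``$\supseteq$'' follows from a ``first loop of $\Lambda^{*}_A$ along a chain'' argument showing $\Cc(K)=\Cc_0\subset D_A$, together with the fact that, since $\Cb\setminus D_A$ is connected, the filling of a connected subset of $D_A$ stays inside $D_A$; ``$\subseteq$'' is immediate from $K\subseteq\Cc_0\subseteq\Cc(K)\subseteq\wh{\Cc(K)}$. One also checks that loops lying in a bounded complementary component of $\Cc_0$ cannot reach $A$, so $m_{\Ub}(\wh{\Cc_0},A)=m_{\Ub}(\Cc_0,A)$. Conditioning on $(K,\Lambda^{(A)})$ and using that $\Lambda^{*}_A$ is an independent Poisson point process, the conditional probability that no loop of $\Lambda^{*}_A$ meets $\Cc_0$ is $\exp(-c\,m_{\Ub}(\Cc_0,A))$ (interpreted as $0$ when $\mathrm{dist}(\Cc_0,A)=0$, consistently since then $\wh{\Cc_0}$ already meets $A$). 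Hence, for bounded measurable $g$ on $\Omega$,
\begin{align*}
\Eb\left[g(\wh{\Cc(K)})\,\mathbf{1}_{\wh{\Cc(K)}\cap A=\emptyset}\right]=\Eb\left[g(\wh{\Cc_0})\,\mathbf{1}_{K\cap A=\emptyset}\,\exp\left(-c\,m_{\Ub}(\wh{\Cc_0},A)\right)\right],
\end{align*}
the right-hand expectation being over $K$ with the standard radial restriction law $\Pf^{\alpha,\beta}_{8/3}$ and an independent loop-soup $\Lambda^{(A)}$ of intensity $c$ in $D_A$.

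The last step transports this to $\Ub$ via $f_A\colon D_A\to\Ub$. Since $f_A$ is a homeomorphism fixing $0,1$, sends loops to loops and preserves the intersection and chain structure, $f_A(\Cc_0)$ is the cluster of $f_A(K)$ built from $f_A(\Lambda^{(A)})$, taking the cluster and taking the filling commute with $f_A$, and $f_A(\Lambda^{(A)})$ is a loop-soup of intensity $c$ in $\Ub$ by conformal invariance; equivalently $\wh{\Cc_0}=f_A^{-1}(\wh{\Cc(f_A(K))})$, the filled cluster of $f_A(K)$ in $f_A(\Lambda^{(A)})$. Now use the $c=0$ case of~\eqref{eq:gen} for $K$ (note $c(8/3)=0$), i.e.\ the standard radial restriction property, in the form $\Eb[\Psi(f_A(K))\,\mathbf{1}_{K\cap A=\emptyset}]=|f_A'(0)|^\alpha f_A'(1)^\beta\,\Eb[\Psi(K)]$ for bounded measurable $\Psi$, together with conformal invariance of the loop-soup and the independence of $K$ and $\Lambda^{(A)}$; changing variables $(K,\Lambda^{(A)})\mapsto(f_A(K),f_A(\Lambda^{(A)}))$ in the previous display (and using Fubini) turns it into
\begin{align*}
\Eb\left[g(\wh{\Cc(K)})\,\mathbf{1}_{\wh{\Cc(K)}\cap A=\emptyset}\right]=|f_A'(0)|^\alpha f_A'(1)^\beta\,\Eb\left[g(f_A^{-1}(\wh{\Cc(K)}))\,\exp\left(-c\,m_{\Ub}(f_A^{-1}(\wh{\Cc(K)}),A)\right)\right],
\end{align*}
where on the right $\wh{\Cc(K)}$ again has the law we started with and $f_A^{-1}(\wh{\Cc(K)})$ plays the role of a sample from $\Pb_A$ in Definition~\ref{def:gen}. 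This is precisely~\eqref{eq:gen} with exponents $(\alpha,\beta)$ and $c=c(\kappa)$, so the proof is complete.

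I expect the main obstacle to be the topological reduction of the second step: one must carefully argue that the loops of $\Lambda^{*}_A$ capable of spoiling $\{\wh{\Cc(K)}\cap A=\emptyset\}$ are exactly those hitting $\Cc_0$ (so that the conditional probability is a clean Poissonian exponential with mass $m_{\Ub}(\Cc_0,A)$), and check the standard but slightly delicate facts that the cluster construction and the filling commute with $f_A$ and that $m_{\Ub}(\cdot,A)$ is insensitive to filling. The rest is a routine use of conformal invariance and of the Markov and restriction properties of the loop-soup and of the standard radial restriction measure.
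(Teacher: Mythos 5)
Your argument is correct and is precisely the adaptation of the chordal argument of \cite{MR3035764} that the paper has in mind: the paper states Lemma~\ref{lem:obs} without proof, noting only that it is the radial analogue of that chordal result and ``can be proven using similar ideas''. Your three steps --- splitting the soup into the loops contained in $\Ub\setminus A$ and the independent Poissonian family of loops meeting $A$, reducing the avoidance event to ``no loop of the second family hits $\Cc_0$'' so that the conditional probability is the exponential $\exp(-c\,m_{\Ub}(\Cc_0,A))$, and then transporting by $f_A$ using conformal invariance of the loop-soup together with the $\kappa=8/3$ restriction property of $K$ --- are exactly the intended ones, and the topological points you flag (filling and cluster formation commuting with $f_A$, insensitivity of $m_{\Ub}(\cdot,A)$ to filling) are the standard technicalities any writeup would have to address.
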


This then explains the reason why we defined the generalized exponents in Definition~\ref{intro:def-dis-3} using general radial restriction measures.

However, as in the Brownian case, except for $n=1$, it is not straightforward to establish the equivalence between the definition~\eqref{lim:gen} and Definition~\ref{intro:def-dis-3} (when both of them are defined, i.e., when the argument is $n\in\Nb^*$).
We also plan to tackle this difficulty in \cite{Qian-future}.

Finally, let us emphasize that Definition~\ref{intro:def-dis-3} via the general restriction measures has the advantage of being valid for non-integer arguments and all $\kappa\in(0,4]$. 
Note that the loop-soup interpretation (and also Lemma~\ref{lem:obs}) only makes sense for $\kappa\in(8/3,4]$.

\section{Radial hypergeometric SLE}\label{ShSLE}
In this section, we will define the radial hypergeometric SLEs (hSLEs) and analyse their geometric properties. 

In reality,  an important part of the work is to first find the right definition for these SLEs. However, to keep the presentation simple, we will hide this step and directly give their definition. Their driving functions are given by complicated functions and we will explain the reason for our definition later in Remark~\ref{remark} and Appendix~\ref{A3}. 

\subsection{The Loewner equation}
Let us first recall that radial Loewner evolution $K_t$ in $\overline\Ub$ started from $1$ and targeting the origin can be parametrized by the following Loewner equation
\begin{align}\label{radial eq}
\partial_t g_t(z) =g_t(z) \frac{e^{iW_t}+g_t(z)}{e^{iW_t}-g_t(z)},\quad g_0(z)=z,
\end{align}
in a way that $g_t$ is the conformal map from $\Ub\setminus K_t$ onto $\Ub$ such that $g_t(0)=0$ and  $g_t'(0)=\exp(-t)$.

Schramm invented in \cite{MR1776084} the SLE processes by using as an input driving functions $W_t$ that are Brownian motions.
In particular, a radial SLE$_\kappa$ is generated by $W_t=\sqrt{\kappa}B_t$ in (\ref{radial eq}), where $B_t$ is a standard Brownian motion. Recall that for $\kappa\in(0,4]$, SLE$_\kappa$ is a.s.\ a simple curve.

There is a common variant of radial SLE$_\kappa$ which is called radial SLE$_\kappa(\rho)$. They are random curves in $\overline\Ub$ started from $1$ targeting the origin with an additional marked point  $x_0:=\exp(-2\theta_0 i)$ for $\theta_0\in(0,\pi)$.
They are generated by \eqref{radial eq} with driving function $W_t$ which is the unique solution of the following equations where $\theta_t= (W_t-V_t)/2$
\begin{align*}
& d W_t =\sqrt{\kappa} d B_t +\frac{\rho}{2} \cot\left(\theta_t \right) dt\\
& d V_t=-\cot \left(\theta_t \right) dt, \quad V_0=-2\theta_0.
\end{align*}
Let $x_t:=g_t(x_0)$. Note that $x_t=\exp(i V_t )$.
\smallskip

In the following, we aim to introduce a new variant of radial SLE processes: They are also random curves in $\overline\Ub$ started from $1$ targeting the origin with one marked point  $x_0:=\exp(-2\theta_0 i)$ for $\theta_0\in(0,\pi)$. 
These curves will depend on $\kappa$ and two real parameters $\mu$ and $\nu$ and we denote them by hSLE$_\kappa(\mu,\nu)$.

We will first recall some preliminaries on hypergeometric functions in Section~\ref{ss:hyp}, then define in Section \ref{sec:G} some function $G$ (depending on $\kappa,\mu$ and $\nu$) in terms of hypergeometric functions. Then, we will choose the driving function $W_t$ of such an hSLE process to be the solution of the following equations where $\theta_t= (W_t-V_t)/2$
\begin{equation}\label{driving-function}
\begin{split}
& d W_t=\sqrt{\kappa} dB_t+\frac{\kappa}{2}\frac{G'(\theta_t)}{G(\theta_t)} dt\\
& d V_t=-\cot (\theta_t) dt, \quad V_0=-2\theta_0.
\end{split}
\end{equation}
Finally, in Section \ref{S:geom}, we will discuss the basic geometric properties of the hSLEs defined above, as the parameters vary in a certain range.

\subsection{Preliminaries on hypergeometric functions}\label{ss:hyp}
In this section, we will give some preliminaries on hypergeometric functions.

For all $a,b\in\Cb$ and $c\in\Rb\setminus\Zb_-$ (where $\Zb_-$ is the set of all non-positive integers), the hypergeometric function $_2F_1(a,b;c;\cdot)$ is defined for all $|z|< 1$  by the power series
\begin{align}\label{eq:series}
_2F_1(a,b;c;z)=\sum_{n=0}^\infty \frac{(a)_n (b)_n}{(c)_n} \frac{z^n}{n!},
\end{align}
where $(x)_0=1$ and $(x)_n=x(x+1)\cdots (x+n-1)$ for all $n>0$.
It can then be analytically extended to $\Cb\setminus[1,\infty)$ and is a particular solution of Euler's hypergeometric differential equation
\begin{align}
z(1-z) u''(z)+\left(c-(a+b+1)z \right) u'(z) -ab\, u(z)=0.
\end{align}

If $c-a-b$ is not an integer, then for all $z> 0$, we have (see for example \cite{MR1225604})
\begin{equation}\label{hyper-identity}
\begin{split}
_2F_1(a,b;c;z)=&\frac{\Gamma(c)\Gamma(c-a-b)}{\Gamma(c-a)\Gamma(c-b)} z^{-a}{_2F}_1\left(a,a-c+1;a+b-c+1;1-\frac{1}{z} \right)\\
&+\frac{\Gamma(c)\Gamma(a+b-c)}{\Gamma(a)\Gamma(b)}(1-z)^{c-a-b}z^{a-c}{_2F}_1\left(c-a, 1-a; c-a-b+1; 1-\frac{1}{z} \right),
\end{split}
\end{equation}
and for all $z<0$, we have
\begin{equation}\label{hyper-identity2}
\begin{split}
_2F_1(a,b;c;z)=&\frac{\Gamma(c)\Gamma(b-a)}{\Gamma(b)\Gamma(c-a)} (-z)^{-a}{_2F}_1\left(a,a-c+1;a-b+1;\frac{1}{z} \right)\\
&+\frac{\Gamma(c)\Gamma(a-b)}{\Gamma(a)\Gamma(c-b)}(-z)^{-b}{_2F}_1\left(b, b-c+1; -a+b+1; \frac{1}{z} \right).
\end{split}
\end{equation}

\subsection{The function $G$: definition and properties}\label{sec:G}
In this section, we are going to define the function $G$ and determine some of its basic properties.

Throughout, $G$ depends only on the parameters $\kappa$ and $\mu, \nu$. To simplify the notations, we  define some auxiliary parameters:
\begin{equation}\label{eq:parameters}
\left. 
\begin{split}
&q_1(\kappa, \mu)=\frac{1}{2\kappa}\sqrt{16\kappa \mu +(4-\kappa)^2}, \quad q_2(\kappa, \nu)=\frac{1}{4\kappa}\sqrt{16\kappa \nu +(4-\kappa)^2},\\
&a=\frac14+q_1(\kappa,\mu)+q_2(\kappa,\nu),\quad
b=\frac14-q_1(\kappa,\mu)+q_2(\kappa,\nu),\quad
c=1+2 q_2(\kappa,\nu),\\
&d=-\frac1\kappa+\frac14+q_2(\kappa,\nu), \quad e=2\mu-\frac{(6-\kappa)(\kappa-2)}{8\kappa}.
\end{split}
\qquad\right|
\end{equation}
We will restrict ourselves to $\kappa\in(0,4]$ and $\mu, \nu\in\Rb$ and in the range
\begin{align}\label{eq:range}
(i)\, \nu\ge 0, \quad (ii)\, \mu \le -\frac{\kappa}{32} +\frac{\nu}{4} -\frac{3}{4\kappa} + \frac38 +\frac{1}{32} \sqrt{16\kappa \nu +(4-\kappa)^2}.
\end{align}
Note that (ii) is equivalent to $b\ge 0$ if $\mu\ge -(4-\kappa)^2/ (16\kappa)$. 
For $\mu< -(4-\kappa)^2/ (16\kappa)$, we interpret the square root in $q_1(\kappa, \mu)$ as 
$$q_1(\kappa,\mu)=\frac{i}{2\kappa}\sqrt{-16\kappa \mu -(4-\kappa)^2}.$$
Our definition of hSLE$(\mu,\nu)$ actually works for a slightly larger range of $(\mu,\nu)$, but will only correspond to general restriction measures if $(\mu,\nu)$ is in~\eqref{eq:range}.

\begin{definition}\label{def:G}
Let $G$ be the function that maps $\theta \in(0, \pi)$ to
\begin{equation}\label{def1}
\begin{split}
G(\theta):=&\frac{\Gamma( c)\Gamma(1/2)}{\Gamma( c- a)\Gamma( c- b)} \left(1+(\cot\theta)^2\right)^{ a-{d}}  {_2}F_1\left( a,  a- c+1; \frac12; -(\cot\theta)^2\right)\\
&+\frac{\Gamma(c)\Gamma(-1/2)}{\Gamma(a)\Gamma( b)} \cot\theta \left(1+(\cot\theta)^2\right)^{ b-{d}}{_2}F_1\left( c-a,1- a;\frac32;-(\cot\theta)^2  \right).
\end{split}
\end{equation}
\end{definition}
Let us make a few remarks about the definition.
\begin{itemize}
\item The function $G$ is well defined and analytic on $(0,\pi)$, since $-(\cot\theta)^2<0$ and both $1/2$ and $3/2$ are not in $\Zb_-$.
By (\ref{hyper-identity}) and the fact that $a+b+1/2=c$, for $\theta\in(0, \pi/2)$, $G$ can be simplified to 
\begin{align}\label{simplified-G}
G(\theta)= (\sin\theta)^{2{d}} {_2F_1} (a,b;c; (\sin\theta)^2).
\end{align}
For $\theta\in[\pi/2,\pi)$, $G$ is the analytic extension of (\ref{simplified-G}). Even though we have defined $G$ only on the real interval $(0, \pi)$, it can be viewed as an analytic function in the complex plane, defined in a small neighborhood of $(0, \pi)$ (This is because~\eqref{def1} only has singularities at $\theta$ where $\cot(\theta)=\pm i$ or $\infty$, so these singularities are away from the real interval $(0,\pi)$. At any point  $\theta \in (0, \pi)$, the Taylor series of $G$ has a positive radius of convergence.) This is why we can obtain $G$ as the unique analytic extension of~\eqref{simplified-G} to $(0,\pi)$.

\item Except for the degenerate case $b=0$, the formula~\eqref{simplified-G} is not valid for $\theta\in(\pi/2,\pi)$. It is an important property of $G$ that it is (when $b\not=0$) not symmetric with respect to $\theta=\pi/2$. This is the reason for the asymmetry of the resulting hSLEs mentioned in Section~\ref{intro:SLE}.

\item The function $G$ is real for $(\mu,\nu)$ in~\eqref{eq:range}. This is clear when $\mu\ge- (4-\kappa)^2/(16\kappa)$, since $a,b,c,d$ are all real in this case. When $\mu< -(4-\kappa)^2/(16\kappa)$, $c,d$ are still real and $a,b$ are complex conjugates. Making the series expansion~\eqref{eq:series} for the function~\eqref{simplified-G}, we see that all its terms have real coefficients, hence $G$ is real for $x\in(0,\pi/2)$. Since $G$ is analytic on $(0,\pi)$, it must be real on the whole interval.

\item In the limiting case $b=0$, we have $\Gamma(b)=\infty$, but $G$ is still well-defined as a limit and for all $\theta\in(0,\pi)$, we have 
\begin{align}\label{b=0}
G(\theta)=(\sin\theta)^{2d}.
 \end{align}
In this case, the SLE defined by~\eqref{driving-function} and~\eqref{radial eq} is just a radial SLE$_\kappa(\rho)$ for $\rho=2\kappa d$.
\end{itemize}

We have in fact chosen $G$ to be a solution of (\ref{diff-G}) (for some reasons that will be clear in Lemma \ref{lem:martingale}). See the following lemma.
\begin{lemma}\label{lem3.2}
The function $G$ satisfies the following differential equation
\begin{align}\label{diff-G}
e-\frac{\nu}{2\sin(\theta)^2} +\frac{G'(\theta)}{G(\theta)} \frac{\cot(\theta)}{2}+\frac{\kappa}{8} \frac{G''(\theta)}{G(\theta)} =0.
\end{align}
\end{lemma}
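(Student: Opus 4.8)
The plan is to verify the differential equation \eqref{diff-G} by direct substitution, exploiting the fact that on $(0,\pi/2)$ the function $G$ has the simple closed form \eqref{simplified-G}, namely $G(\theta)=(\sin\theta)^{2d}\,{_2F_1}(a,b;c;(\sin\theta)^2)$, and then invoking analyticity to extend the identity to all of $(0,\pi)$. First I would make the change of variables $s=(\sin\theta)^2$, so that $s\in(0,1)$ as $\theta$ ranges over $(0,\pi/2)$, and write $G(\theta)=s^{d}\,F(s)$ with $F(s)={_2F_1}(a,b;c;s)$. Using $ds/d\theta=2\sin\theta\cos\theta=2\sqrt{s(1-s)}$ (taking the positive branch on $(0,\pi/2)$), I would express $G'(\theta)$ and $G''(\theta)$ in terms of $F$, $F'$, $F'''$ evaluated at $s$, and also rewrite $\cot\theta=\sqrt{(1-s)/s}$ and $1/\sin^2\theta=1/s$. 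Substituting these into \eqref{diff-G} and clearing the common factor $s^{d}F(s)$ (which is nonzero on $(0,1)$) should reduce the claimed ODE to a second-order linear ODE for $F$ in the variable $s$.

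The key step is then to check that this reduced ODE is precisely Euler's hypergeometric equation $s(1-s)F''+(c-(a+b+1)s)F'-abF=0$, given the definitions of $a,b,c,d,e$ in \eqref{eq:parameters}. Concretely, after substitution the coefficients of $F''$, $F'$ and $F$ in the reduced equation will be rational functions of $s$ whose numerators are low-degree polynomials in $s$ with coefficients built from $\kappa,\mu,\nu$ (equivalently from $q_1,q_2$); I would match them term by term against $s(1-s)$, $c-(a+b+1)s$ and $-ab$ up to an overall nonzero common factor. The three nontrivial algebraic identities to confirm are: (1) the coefficient structure forces $2d=$ the exponent that makes the ${_2F_1}$ indicial equation at $0$ consistent, i.e.\ essentially $d(2d-1)\kappa/8 + d\cdot(\tfrac12) - \tfrac{\nu}{2}+e$ combine correctly with the factor $s^{d}$ shift; (2) the relation $a+b = \tfrac12 + 2q_2 = c - \tfrac12$, which holds by inspection of \eqref{eq:parameters}; and (3) that $ab$ matches the constant term, which unwinds to an identity in $q_1,q_2$, since $ab=(\tfrac14+q_2)^2 - q_1^2$. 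These are all elementary once the substitution is done, though bookkeeping-heavy.

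Finally, I would record the extension argument: $G$ is analytic on all of $(0,\pi)$ by the first bullet after Definition~\ref{def:G} (since $-(\cot\theta)^2<0$ keeps the arguments of the ${_2F_1}$'s away from the branch cut $[1,\infty)$, and $\tfrac12,\tfrac32\notin\Zb_-$), so the left-hand side of \eqref{diff-G} is an analytic function on $(0,\pi)$; having shown it vanishes identically on $(0,\pi/2)$, it vanishes on $(0,\pi)$ by the identity theorem. I would also note that the degenerate case $b=0$ (where $G(\theta)=(\sin\theta)^{2d}$ by \eqref{b=0}) can be checked separately as a consistency cross-check, since there the ODE reduces to the defining relation for the SLE$_\kappa(\rho)$ drift with $\rho=2\kappa d$. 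The main obstacle is purely the computational one: correctly transporting the third-derivative-free second-order ODE through the nonlinear change of variables $s=\sin^2\theta$ and then verifying that the resulting polynomial coefficient identities hold under the parametrization \eqref{eq:parameters}; there is no conceptual difficulty, but the algebra must be organized carefully (e.g.\ by working with $q_1^2$ and $q_2$ as the free parameters rather than $\mu,\nu$ directly) to avoid sign and factor errors. I would relegate the longest part of this verification to Appendix~\ref{A1} as the paper's outline suggests.
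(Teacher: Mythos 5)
Your proposal is correct and follows essentially the same route as the paper: the change of variable $z=(\sin\theta)^2$ turns \eqref{diff-G} into a (modified) hypergeometric equation solved by $z^{d}\,{_2F_1}(a,b;c;z)$, and the identity extends to all of $(0,\pi)$ by analyticity of \eqref{def1} and of the coefficients of \eqref{diff-G}. (The reference to $F'''$ in your substitution step is evidently a typo for $F''$, since the equation is second order.)
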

\begin{proof}
For $\theta\in(0,\pi/2)$, we can make the change of variable $z=(\sin\theta)^2$ and let $H(z)=G(\theta)$. Then (\ref{diff-G}) is equivalent to another differential equation for $H(z)$:
\begin{align}\label{diff-H}
e-\frac{\nu}{2z} +\frac{H'(z)}{H(z)}\left(\left(1+\frac{\kappa}{4}\right)-\left(1+\frac{\kappa}{2}\right)z \right) +\frac{H''(z)}{H(z)} \frac{\kappa}{2}z(1-z)=0.
\end{align}
The equation (\ref{diff-H}) is a modified hypergeometric differential equation and has 
\begin{align*}
H(z)=z^{{d}}{_2F_1}(a,b;c;z)
\end{align*}
as one of its two linearly independent  solutions.
This shows that \eqref{simplified-G} satisfies \eqref{diff-G} for $\theta\in(0,\pi/2)$.
Since \eqref{def1} is the analytic continuation of \eqref{simplified-G} and the coefficients of \eqref{diff-G} are clearly analytic on $(0,\pi)$, we get that \eqref{def1} is a solution of \eqref{diff-G}.
\end{proof}

Let us now derive some asymptotic behaviors of $G$ as $\theta$ tends to $0$ and $\pi$, which will be useful in the next section. We restrict ourselves to the case $b\not=0$ (since the case $b=0$ is simply given by~\eqref{b=0}).
\begin{itemize}
\item As  $\theta$ tends to $0$,  it is easy to see by (\ref{simplified-G}) that we have $G(\theta)\sim(\cot\theta)^{-2{d}}\sim \theta^{2{d}}$ and
\begin{align}\label{e1}
\frac{G'(\theta)}{G(\theta)}\underset{\theta\to 0}{=}\frac{2{d}}\theta + o(1/\theta).
\end{align}
\item As $\theta$ tends to $\pi$,
according to the identity (\ref{hyper-identity2}), 
the first and second terms of (\ref{def1}) are respectively equivalent to $C_1\cdot |\cot\theta|^{2c-2-2d}$ and $C_2\cdot |\cot\theta|^{2c-2-2d-1} (\cot\theta)$, where
\begin{align}
\notag
&C_1=\frac{\Gamma(c)\Gamma(1/2)}{\Gamma(c-a)\Gamma(c-b)}\cdot\frac{\Gamma(1/2)\Gamma(c-1)}{\Gamma(a)\Gamma(-1/2-a+c)},\\
\label{c2} &C_2=\frac{\Gamma(c)\Gamma(-1/2)}{\Gamma(a)\Gamma(b)}\cdot\frac{\Gamma(3/2)\Gamma(c-1)}{\Gamma(c-a)\Gamma(1/2+a)}.
\end{align}
Using the relation $a+b+1/2=c$ and the reflection identity $\Gamma(z)\Gamma(1-z)=\pi/\sin(\pi z)$, we can in fact deduce that $C_1=-C_2$. Note that $C_1=0$ if and only if $b=0$ (for $\nu, \mu$ in the range~\eqref{eq:range}), but we have ruled out this case.

Note that the same asymptotics hold as  $\theta\to0$: The first and second terms of (\ref{def1}) are also respectively equivalent to $C_1\cdot |\cot\theta|^{2c-2-2d}$ and $C_2\cdot |\cot\theta|^{2c-2-2d-1} (\cot\theta)$.
However, as $\theta\to 0$, we have $\cot(\theta)>0$, hence these two leading terms cancel out. This explains why $G(\theta)$ is equivalent to a higher order term $(\cot \theta)^{-2d}$.

However, as $\theta\to \pi$, since $\cot(\theta)<0$, these leading terms do not cancel out, yielding
\begin{align}\label{asym-G-pi}
G(\theta)\sim (C_1-C_2) \cdot |\pi-\theta|^{2d+2-2c}.
\end{align}
 and also
\begin{align}\label{e2}
\frac{G'(\theta)}{G(\theta)}\underset{\theta\to\pi}{=}\frac{2c-2-2d}{\pi-\theta} + O(1).
\end{align}
Note that we always have $2c-2-2d>0$.
\end{itemize}

Finally, we also prove the following lemma.
\begin{lemma}\label{lem:G>0}
For all $\theta\in(0, \pi)$, we have $G(\theta)>0$.
\end{lemma}
\begin{proof}
We define $K(\theta) = G(\theta) (\sin \theta)^{-2d}$. It suffices to prove that $K(\theta)>0$ for all $\theta\in(0, \pi)$.
For $\theta\in(0, \pi/2)$, by~\eqref{simplified-G}, we know that $K(\theta)= {_2F_1} (a,b;c; (\sin\theta)^2)$.
Note that for $\nu, \mu$ in the range~\eqref{eq:range}, all the coefficients of~\eqref{eq:series} are positive, hence $K(\theta)>0$ for $\theta\in(0, \pi/2)$. We also know from~\eqref{def1} that 
$$K(\pi/2)=G(\pi/2)=\frac{\Gamma( c)\Gamma(1/2)}{\Gamma( c- a)\Gamma( c- b)}>0.$$
From~\eqref{diff-G}, we deduce that $K$ satisfies the following differential equation
\begin{align}\label{diff-K}
e-d-\frac{\kappa d^2}{2} +\frac{\kappa d +1}{2} \cot\theta \frac{K'(\theta)}{K(\theta)} + \frac{\kappa}{8} \frac{K''(\theta)}{K(\theta)} =0.
\end{align}
Suppose the lemma is not true, so that there exists $\theta_0 \in (\pi/2, \pi)$ such that $K(\theta_0) =0$. Since $K$ is analytic, there exists $n\in\Nb$ such that $K(\theta)= (\theta -\theta_0)^n f(\theta)$, where $f$ is  an analytic function on $(0,\pi)$ with $f(\theta_0)\not=0$. Putting this expression back into~\eqref{diff-K}, in order to cancel out the poles at $\theta_0$, we deduce that we can only have $n=1$. This implies that $\theta_0$ is a simple root of $K$, so that there exists $\theta_1 \in(\pi/2, \pi)$ such that $K(\theta_1)<0$.

As $\theta$ tends to $\pi$, $G$ is equivalent to~\eqref{asym-G-pi}, which is in particular strictly positive, hence $K(\theta)$ is also strictly positive as $\theta$ tends to $\pi$. Since $K(\pi/2)>0$ and $K(\theta_1)<0$, we deduce that there exists $\theta_2\in(\pi/2, \pi)$ where $K$ attains its minimum, with $K(\theta_2)<0$, $K'(\theta_2)=0$ and $K''(\theta_2)\ge 0$.

The range~\eqref{eq:range} implies that $e-d- \kappa d^2 / 2\le 0$ (where the equality holds if and only if the equality in (ii) of~\eqref{eq:range} holds for $\mu$, i.e, $b=0$) and $ (\kappa d +1)/2 >0$. 
If $b=0$, then $G$ is given by~\eqref{b=0} which is strictly positive. Suppose that $b\not=0$ so that $e-d- \kappa d^2 / 2 > 0$.
At $\theta= \theta_2$, the left hand-side of~\eqref{diff-K} is strictly positive, which leads to a contradiction. This completes the proof.
\end{proof}

\subsection{Geometric properties of radial hSLE }\label{S:geom}
In this section, we will derive some basic geometric properties of the radial hSLE$_\kappa(\mu,\nu)$, which is defined to be the radial Loewner evolution encoded by~\eqref{radial eq}, with driving function $W_t$ determined by $G$ and~\eqref{driving-function}.

Let $\gamma$ be a radial hSLE$_\kappa(\mu, \nu)$ starting at $1$ with a marked point $x_0=e^{i 2\theta_0}$ where $\theta_0\in(0,\pi)$. 
Define the stopping time 
\begin{align}\label{stoping_time}
T:=\inf\{t>0: \theta_t=0 \text{ or } \theta_t=\pi\}.
\end{align}
Note that if $T<\infty$, then $T$ is also the first time that $\gamma$ disconnects $x_0$ from $0$ in $\Ub$. Let us first prove the following lemma.

\begin{lemma}\label{lem:gamma_abs_cont}
For all $t<T$, the process $\gamma([0,t])$ is a.s.\ a simple curve which stays in $\Ub$ (except its starting point $\gamma(0)$).
\end{lemma}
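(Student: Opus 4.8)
The plan is to reduce the statement to the standard fact that radial SLE$_\kappa$ is a simple curve for $\kappa\in(0,4]$, via a local absolute continuity (Girsanov) argument. First I would observe that for $t<T$, the quantity $\theta_t$ stays strictly inside $(0,\pi)$, so the drift coefficient $\tfrac{\kappa}{2}\,G'(\theta_t)/G(\theta_t)$ in~\eqref{driving-function} is bounded on $[0,t]$ — this uses that $G$ is analytic and strictly positive on $(0,\pi)$ (analyticity is noted right after Definition~\ref{def:G}; positivity follows since $G$ is real there and, by~\eqref{simplified-G}, positive near $0$, hence positive throughout by continuity and the fact that $G$ cannot vanish without the Loewner flow degenerating — alternatively one restricts to the range~\eqref{eq:range} where this is transparent). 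Likewise $\cot(\theta_s)$ is bounded on $[0,t]$, so $V_s$ and hence the conformal maps $g_s$ are well-behaved up to time $t$.

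The key step is then a Girsanov change of measure. On the event $\{t<T\}$, up to time $t$ the law of the driving function $W$ of the hSLE is absolutely continuous with respect to the law of $\sqrt{\kappa}B$, the driving function of radial SLE$_\kappa$, with Radon--Nikodym derivative
\begin{align*}
\mathcal{Z}_t=\exp\left(\int_0^t \frac{\sqrt\kappa}{2}\frac{G'(\theta_s)}{G(\theta_s)}\,dB_s-\frac{1}{8}\int_0^t\left(\frac{G'(\theta_s)}{G(\theta_s)}\right)^2\kappa\, ds\right),
\end{align*}
which is finite and positive on $\{t<T\}$ precisely because the integrand $G'(\theta_s)/G(\theta_s)$ is bounded there. (One makes this rigorous by stopping at $T_\epsilon:=\inf\{s:\theta_s\notin(\epsilon,\pi-\epsilon)\}\wedge t$, applying Girsanov on $[0,T_\epsilon]$ where everything is genuinely bounded, and then letting $\epsilon\to0$.) Since radial SLE$_\kappa$ is a.s.\ a simple curve staying in $\Ub$ away from its start, and this is an event measurable with respect to the curve up to time $T_\epsilon$, absolute continuity transfers it to the hSLE: $\gamma([0,T_\epsilon])$ is a.s.\ simple and in $\Ub$, for every $\epsilon$. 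Taking a union over $\epsilon\to0$ gives the claim for $\gamma([0,t])$ on $\{t<T\}$, which is what is asserted.

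The main obstacle is the uniform control of $G'/G$ on $[0,t]\cap\{s<T\}$ — i.e.\ ensuring the Girsanov density is a genuine (uniformly integrable) martingale and not merely a local martingale — together with verifying that the SLE$_\kappa$ Loewner flow and the hSLE Loewner flow are driven on a common probability space so that ``the curve up to time $t$'' makes sense under both measures simultaneously. Both are handled by the $\epsilon$-truncation above: on $[0,T_\epsilon]$ the drift is bounded by a deterministic constant (depending on $\epsilon$), Novikov's condition holds trivially, and the two Loewner flows are coupled through the same Brownian motion $B$; the only subtlety is that one must argue $T_\epsilon\uparrow T$ a.s.\ (so that $\{t<T\}=\bigcup_\epsilon\{t<T_\epsilon\}$ up to null sets), which follows from continuity of $t\mapsto\theta_t$ on $[0,T)$. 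The asymptotics~\eqref{e1} and~\eqref{e2} of $G'/G$ near $0$ and $\pi$ are not needed here — they only matter for analyzing what happens \emph{at} time $T$ — so this lemma is genuinely soft once the absolute-continuity framework is set up.
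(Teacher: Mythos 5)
Your proposal is correct and follows essentially the same route as the paper: bound the drift $G'(\theta_t)/G(\theta_t)$ on compact subsets of $(0,\pi)$, apply Girsanov up to the truncated stopping time $T_\eps$ to get absolute continuity with respect to ordinary radial SLE$_\kappa$, transfer the simple-curve property, and let $\eps\to 0$. Your additional remarks on the positivity of $G$ and on Novikov's condition are sensible refinements of points the paper leaves implicit, but they do not change the argument.
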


\begin{proof}
Note that on $(0,\pi)$, $G$ is $C^\infty$ and $G(\theta)>0$ by Lemma~\ref{lem:G>0}, hence the drift term $G'(\theta_t)/G(\theta_t)$ in ~\eqref{driving-function} is bounded on any compact subset of $(0,\pi)$.
By Girsanov theorem, $W_t$ is absolutely continuous w.r.t.\ $\sqrt{\kappa} B_t$, when $\theta_t$ belongs to any given compact subset of $(0, \pi)$. 
More precisely, for $\eps\in(0,\pi/2)$, define the stopping time 
\begin{align}\label{stopping_time}
T_\eps:=\inf\{t>0: \theta_t\le \eps \text{ or } \theta_t>\pi-\eps\}.
\end{align}
Then $\gamma|_{t\in[0, T_\eps]}$ is absolutely continuous w.r.t.\ an ordinary radial SLE$_\kappa$. In particular, it is a.s.\ a simple curve which does not hit the boundary $\partial \Ub$.
Since this is true for all $\eps$, it follows that  for any $t<T$, $\gamma([0,t])$ is a.s.\  simple which does not hit the boundary. 
\end{proof}

Now, we analyse the behavior of $\gamma$ as $t$ tends to $T$.  We aim to prove the following proposition.

\begin{proposition}\label{prop:hsle_geom}
Radial hSLE$_\kappa(\mu,\nu)$ has the following  properties (see Figure \ref{fig:hsle1}):
\begin{enumerate}[(i)]
\item If $b=0$, then it is a radial SLE$_\kappa(\rho)$ with $\rho=2\kappa d$.
In particular, we have $T=\infty$ and $\gamma(\infty)=0$ a.s. See Figure~\ref{fig:hsle1}(i).

\item If $b\not=0$ and $\nu\ge 1/2-\kappa/16$, then $T<\infty$ and $\gamma(T)=x_0$ a.s. Moreover, $\gamma([0,T])$ a.s.\ leaves the origin on its left. See Figure~\ref{fig:hsle1}(ii).

\item If $b\not=0$ and $\nu< 1/2-\kappa/16$, then $T<\infty$ and $\gamma(T) \in \ell_t$ a.s., where $\ell_t$ is  the counterclockwise part  of the boundary of $\Ub\setminus \gamma[0,t]$ from $x_0$ to $\gamma_t$. Moreover, $\gamma([0,T])$ a.s.\ leaves the origin on its left. See Figure~\ref{fig:hsle1}(iii).
\end{enumerate}
\end{proposition}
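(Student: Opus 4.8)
The plan is to study the one-dimensional diffusion $\theta_t = (W_t - V_t)/2$ near the boundary of its state space $(0,\pi)$, and to read off the behavior of $\gamma$ as $t\to T$ from the classification of the endpoints $0$ and $\pi$ as entrance/exit/natural boundaries. From~\eqref{driving-function} we get
\begin{align}\label{eq:dtheta}
d\theta_t = \frac{\sqrt\kappa}{2}\,dB_t + \frac12\left(\frac{\kappa}{2}\frac{G'(\theta_t)}{G(\theta_t)} + \cot(\theta_t)\right)dt,
\end{align}
so the drift is governed by $G'/G$, whose precise asymptotics near $0$ and $\pi$ are exactly~\eqref{e1} and~\eqref{e2}. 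Case (i) is immediate: if $b=0$ then $G(\theta)=(\sin\theta)^{2d}$ by~\eqref{b=0}, so $G'/G = 2d\cot\theta$ and~\eqref{driving-function} reduces to the radial SLE$_\kappa(\rho)$ equations with $\rho = 2\kappa d$; the standard fact that radial SLE$_\kappa(\rho)$ with this $\rho$ (which lies in the appropriate range since $\nu\ge0$) converges to $0$ with $T=\infty$ then gives the claim.

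For cases (ii) and (iii) assume $b\neq 0$. First I would show $T<\infty$ a.s. Near $\theta=0$, plugging~\eqref{e1} into~\eqref{eq:dtheta} gives $d\theta_t \approx \frac{\sqrt\kappa}2 dB_t + \frac{1}{2}\left(\frac{\kappa d}{\theta_t} + \frac1{\theta_t}\right)dt$, i.e.\ a Bessel-like drift of order $(\kappa d + 1)/(2\theta_t)$; near $\theta=\pi$, using~\eqref{e2} and $\cot\theta \sim -1/(\pi-\theta)$, the drift in $(\pi-\theta_t)$ has order $-\bigl(\kappa(2c-2-2d) - 2\bigr)/\bigl(4(\pi-\theta_t)\bigr) \cdot 2$; one computes these effective dimensions and checks, via the standard scale-function/speed-measure criteria (Feller's test, as in the treatment of SLE$_\kappa(\rho)$ in e.g.\ the references on radial SLE), that for the parameter ranges in question $\theta_t$ reaches $\{0,\pi\}$ in finite time. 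The key dichotomy between (ii) and (iii) is \emph{which} endpoint is hit: the threshold $\nu = 1/2 - \kappa/16$ is precisely where the effective Bessel dimension of $\theta_t$ at $\pi$ (equivalently, of $\pi-\theta_t$ at $0$), which depends on $\nu$ through $c = 1+2q_2$ and the $\cot$ drift, crosses the value $2$ — above it the process cannot reach $\pi$, so it must exit through $0$, meaning $\gamma$ disconnects $x_0$ from $0$ with $\theta_T = 0$; below it $\pi$ becomes accessible as well. I would make this precise by comparing $\theta_t$ near each endpoint with a Bessel process of the appropriate dimension and invoking the transience/recurrence trichotomy.

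Next, translate the endpoint behavior of $\theta_t$ into statements about the curve $\gamma$. Recall $V_t = \arg x_t$ and $W_t = \arg(\text{tip})$, with $\theta_t = (W_t-V_t)/2$ the half-angle between the image of the marked point and the tip under $g_t$. When $\theta_t \to 0$, the tip of $\gamma$ and the marked point $x_0$ merge on the same side, which forces $\gamma(T) = x_0$ — this is case (ii); when $\theta_t \to \pi$, they merge from the other side, i.e.\ the tip hits the counterclockwise boundary arc $\partial_t$ from $x_0$ to $\gamma_t$, which is case (iii). One must check $T$ is genuinely the first disconnection time and that $g_t$ extends continuously to the tip at time $T$ (continuity of radial SLE-type curves, which follows from absolute continuity w.r.t.\ ordinary SLE$_\kappa$ away from the endpoints plus a standard a priori estimate near $T$). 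Finally, for the ``leaves the origin on its left'' assertion in both (ii) and (iii): the origin stays in the unbounded-from-$\gamma$ complementary component throughout $[0,T)$ since $g_t$ fixes $0$, and the orientation is determined by the sign of $V_t - W_t = -2\theta_t < 0$ together with the direction in which $\theta_t$ drifts — because $\theta_t$ starts in $(0,\pi)$ and the geometry of the Loewner flow places $x_t = e^{iV_t}$ counterclockwise from the tip $e^{iW_t}$, the closing-up of $\gamma$ at $T$ encircles $x_0$ but not $0$, leaving $0$ on the left. The main obstacle is the delicate boundary classification of the diffusion~\eqref{eq:dtheta} at $\theta=\pi$: the drift there is a sum of two singular terms ($\frac{\kappa}2 G'/G$ with the constant $C_1 = -C_2$ subtlety, and $\cot\theta$) and one needs~\eqref{asym-G-pi} and~\eqref{e2} with enough precision — down to the $O(1)$ correction — to pin down the effective dimension and hence locate the threshold $\nu = 1/2 - \kappa/16$ exactly; I expect this to be where most of the real work lies.
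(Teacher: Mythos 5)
Your case (i) and your argument that $T<\infty$ with the origin left on the left are on the right track and match the paper: near $\theta=0$ the process compares to a Bessel process of dimension $2+4q_2(\kappa,\nu)>2$ (so $0$ is never hit), and near $\theta=\pi$ the process $\pi-\theta_t$ compares to a Bessel process of dimension $2-4q_2(\kappa,\nu)<2$ (so $\pi$ is hit in finite time). But your mechanism for separating cases (ii) and (iii) is wrong. Since $q_2(\kappa,\nu)>0$ throughout the range $\nu\ge 0$, the endpoint $\pi$ is \emph{always} accessible and $0$ \emph{never} is, whatever the value of $\nu$; in both cases (ii) and (iii) the diffusion exits through $\theta=\pi$. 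The Bessel dimension at $\pi$ crosses $2$ only at $q_2=0$, i.e.\ at $\nu=-(4-\kappa)^2/(16\kappa)$, which lies outside the admissible range and is not the threshold $\nu=1/2-\kappa/16$. Moreover your geometric identification is inverted: $\theta_t\to 0$ would mean $\gamma$ pinches off $x_0$ leaving the origin on its \emph{right}, contradicting the "origin on the left" claim of case (ii); in fact $\gamma(T)=x_0$ in case (ii) is attained with $\theta_T=\pi$, after the curve has wound counterclockwise around the origin and returned to $x_0$ from the other side.

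The missing idea is that the (ii)/(iii) dichotomy is not visible in the boundary classification of $\theta_t$ at all; it is a finer question of \emph{where} on the arc $\partial_t$ the pinch occurs. The paper resolves it by a coordinate change of the type in \cite{MR2188260}: map $\Ub$ to $\Hb$ sending $1,x_0$ to $0,\infty$, view the image $\wt\gamma$ as a chordal curve, and observe that $\gamma(T)=x_0$ exactly when $\wt\gamma$ escapes to $\infty$. One then tracks $X_t=|z_t-w_t|$ (the distance from the image of the origin to the image of the driving point), computes $d\log X_t$ by It\^o calculus as in~\eqref{eq:X_t}, and after the time change $r(t)=\int_0^t(\cot\theta_s)^2\,ds$ finds that $\log X_t\to-\infty$ if and only if $\frac{\kappa}{2}q_2(\kappa,\nu)-\frac12<0$, which is equivalent to $\nu<1/2-\kappa/16$. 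This second computation is independent of, and in addition to, the Bessel analysis of $\theta_t$; without it your proof cannot locate the threshold.
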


\begin{proof}
In case (i) when $b=0$, by~\eqref{b=0}, we have $G'(\theta)/ G(\theta) =2d \cot(\theta)$. Putting it back into~\eqref{driving-function}, we get the same equation as that of a radial SLE$_\kappa(2\kappa d)$.

In the rest of the proof, we consider $b\not=0$.
Note that $\theta_t$ is solution to the stochastic differential equation
\begin{align}\label{eq:theta}
d\theta_t= \frac{\sqrt{\kappa}}{2} dB_t+\frac{\kappa}{4}\frac{G'(\theta_t)}{G(\theta_t)}dt +\frac12\cot\theta_t dt.
\end{align}
The drift term in \eqref{eq:theta} is continuous in $\theta_t$ for $\theta_t\in (0,\pi)$ and tends to $\infty$ when $\theta_t$ approaches $0$ and $\pi$ with respective speeds
$$(\frac{\kappa}{2}d+1/2)\frac{1}{\theta_t}+ O(1) \text{ as } \theta_t \to 0 \quad \text{and} \quad  \!\left(-\frac{\kappa}{4}(2d+2-2c)-\frac12 \right)\frac{1}{\pi-\theta_t}+O(1) \text{ as } \theta_t \to \pi$$
due to (\ref{e1}) and (\ref{e2}).
One can then make a Girsanov transformation and show that:
\begin{itemize}
\item When $\theta_t$ is in a neighborhood of $0$, by (\ref{e1}), it is absolutely continuous w.r.t.\ the solution of the following SDE:
\begin{align}\label{SDE1}
d\theta_t= \frac{\sqrt{\kappa}}{2} dB_t+(\frac{\kappa}{2}d+1/2)\frac{1}{\theta_t} dt,
\end{align}
which is a Bessel process with dimension $2+4q_2(\kappa,\nu)>2$.  Hence $\theta_t$ will a.s.\ not hit $0$.
This implies that $\gamma|_{t\in [0,T]}$ will a.s.\ not not disconnect $x_0$ from $0$ in a way that leaves $0$ on its right. 

\item As we argued in the proof of Lemma~\ref{lem:gamma_abs_cont},  on any compact subinterval of $(0, \pi)$, $G'(\theta)/G(\theta)$ is bounded, and so is $\cot(\theta)$. 
Consequently, by~\eqref{eq:theta}, for any $\eps>0$, before the stopping time $T_\eps$ defined in~\eqref{stopping_time}, $\theta_t$ is absolutely continuous w.r.t.\ $(\sqrt{\kappa}/2) B_t$.
Since the Brownian motion started in $(\eps, \pi-\eps)$ a.s.\ hits $\eps$ or $\pi-\eps$ at a finite time, so does $\theta_t$. We have argued in the previous bullet point that $\theta_t$ a.s.\ does not hit $0$, hence if $\eps$ is sufficiently small, each time $\theta_t$ hits $\eps$, it will a.s.\ come back to $2\eps$. Then starting from $2\eps$, $\theta_t$ has a positive probability of hitting $\pi-\eps$ before $\eps$. This implies that $\theta_t$ will end up hitting $\pi-\eps$ at a finite time. This is true for all $\eps>0$.
When $\theta_t$ is in a neighborhood of $\pi$, by (\ref{e2}), the process $\pi-\theta_t$ is absolutely continuous w.r.t.\ the solution of the following SDE:
\begin{align}\label{SDE2}
d \omega_t = \frac{\sqrt{\kappa}}{2} dB_t+ \!\left( \frac{\kappa}{4}(2d+2-2c)+1/2\right) \frac{1}{\omega_t} dt,
\end{align}
which is a Bessel process with dimension $2-4q_2(\kappa,\nu)<2$.  Hence $\theta_t$  will  a.s.\ hit $\pi$ at a finite time.
This implies that $\gamma|_{t\in [0,T]}$ will  a.s.\ disconnect $x_0$ from the origin in a way that leaves $0$ on its left. 
\end{itemize}

The above arguments show that as long as $b\not=0$, $\gamma|_{t\in [0,T]}$ will a.s.\ disconnect $x_0$ from the origin in a way that leaves $0$ on its left. 
At the disconnection time, $\gamma$ can either hit exactly $x_0$, or hit some other point on $\ell_t$, always leaving the origin on its left.
To see which case we are in, one needs to do a coordinate change (of the type \cite{MR2188260}).

\vspace{1mm}
\begin{figure}[h!]
\centering
\includegraphics[width=0.8\textwidth]{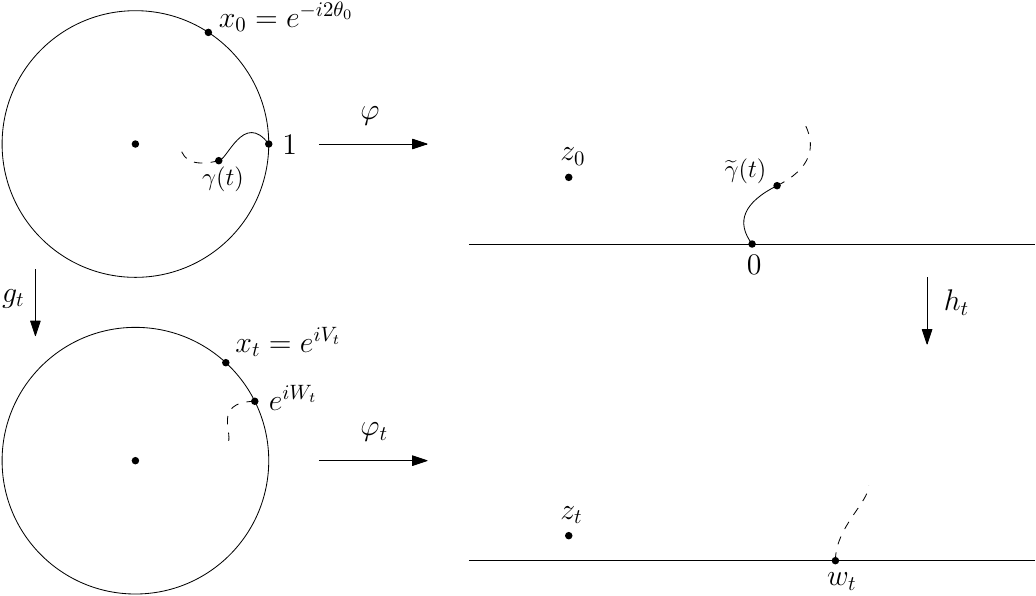}
\caption{The change of coordinate for the radial hSLE}
\label{fig:coordinate}
\end{figure}

We want to map $\gamma$ to the upper half-plane by sending $1, x_0$ to $0, \infty$, so that we can view the image of $\gamma$ as a chordal SLE from $0$ to $\infty$. The event that $\gamma(T)=x_0$ will then be the same as the event that the image of $\gamma$ goes to $\infty$ without hitting the boundary $\Rb$. 

More precisely (see Figure \ref{fig:coordinate}), let $\varphi$ be the M\"obius map  from $\Ub$ onto $\Hb$ that sends $1, x_0$ to $0, \infty$ and given by
\begin{align*}
\varphi(x)= C \frac{\psi(x)}{\psi(x)+\tan(\theta_0)},
\end{align*}
where  $C\in\Rb$ and $\psi(x)=i(1-x)/(1+x)$, so that $\psi^{-1}(z)=(i-z)/(i+z)$.
This implies
\begin{align}\label{eq:phi-1}
\varphi^{-1}(z)=\frac{i(C-z)-z\tan(\theta_0)}{i(C-z)+z\tan(\theta_0)}=e^{-2i\theta_0} \!\left(1-\frac{iC \sin(2\theta_0)}{z}+O(1/z^2) \right) \text{ as } z\to \infty.
\end{align}
For all $t<T$, let $x_t=g_t(x_0)=e^{iV_t}$. Let $\wt \gamma$ be the image of $\gamma$, parametrized in a way that $\wt \gamma(t)= \varphi(\gamma(t))$.
Let $h_t$ be the conformal map from $\Hb\setminus \wt\gamma([0,t])$ onto $\Hb$, normalized at infinity in a way that there exist $s(t)\in \Rb$, so that
\begin{align}\label{eq:h_t}
h_{t}(z)= z+{2 s(t)}/{z} +O\left({1}/{z^2}\right) \, \text{ as } \, z\to\infty.
\end{align}
Let $\varphi_t$ be the M\"obius map from $\Ub$ onto $\Hb$ given by $h_t \circ \varphi \circ g_t^{-1}$.
Let $w_t=\varphi_t(e^{iW_t})$ and $z_t=\varphi_t(0)$.
Note that since  $\varphi_t$ sends $x_t$ to $\infty$, there exists $C_t\in\Rb$ so that
\begin{align}\label{eq:varphi_t}
\varphi_t(x)=w_t+ C_t \frac{\psi(e^{-iW_t}x)}{\psi(e^{-iW_t} x)+\tan(\theta_t)}=w_t+ \frac{iC_t(e^{iW_t}-x)}{(\tan(\theta_t)-i)(x-x_t)}.
\end{align}
Using~\eqref{eq:phi-1} and developing $g_t \circ \varphi^{-1}$ at $z=\infty$, we get
\begin{align}\label{d1}
g_t(\varphi^{-1}(z))-x_t=-  g_t'(x_0)  iC \sin(2\theta_0) e^{-2i\theta_0} \frac{1}{z} +O(1/z^2).
\end{align}
Combining~\eqref{eq:varphi_t} and~\eqref{d1}, we can then develop $h_t=\varphi_t\circ g_t \circ \varphi^{-1}$ at $z=\infty$ and get its leading term 
\begin{align}\label{eq:coef_z}
-\frac{iC_t(e^{iW_t}-x_t)}{(\tan(\theta_t)-i)e^{-2i\theta_0} g_t'(x_0) iC\sin(2\theta_0)} z.
\end{align}
By~\eqref{eq:h_t}, the coefficient of $z$ given by~\eqref{eq:coef_z} should be equal to $1$, yielding
\begin{align}\label{ctc}
C_t/C=\frac{(\tan(\theta_t)-i) g_t'(x_0) \sin(2\theta_0)}{(x_t-e^{iW_t})e^{2i\theta_0}}=e^{-iV_t} g_t'(x_0) e^{-2i\theta_0}  \sin(2\theta_0)/ \sin(2\theta_t).
\end{align}
Now we want to inspect the quantity $X_t:=|z_t-w_t|$. The chordal SLE $\wt\gamma$ goes to infinity if and only if $X_t$ goes to infinity.
Note that by~\eqref{eq:varphi_t}, we have
\begin{align*}
z_t-w_t=C_t \cos(\theta_t) e^{i\theta_t}.
\end{align*}
By~\eqref{ctc}, this implies that for some $C'\in\Rb$, we have
\begin{align}\label{xt}
X_t=C_t\cos(\theta_t)=C' e^{-iV_t} g_t'(x_0) /\sin(\theta_t).
\end{align}
Applying It\^o calculus to~\eqref{xt}, we get that
\begin{align}\label{eq:X_t}
d\log X_t= -\frac{1}{2(\sin\theta_t)^2}-\frac{\cot(\theta_t)}{2} \!\left(\sqrt{\kappa} dB_t+\frac{\kappa}{2}\frac{G'(\theta_t)}{G(\theta_t)} +\cot(\theta_t) \right)dt+(1+(\cot\theta_t)^2)\frac{\kappa}{8} dt.
\end{align}
Note that as $t\to T$, we have $\theta_t\to\pi$, so $\cot\theta_t\sim -\sin\theta_t\to 0$. Making the change of time $r(t)=\int_0^t (\cot\theta_s)^2 ds$ for $t\in[0,T]$ so that $r\in [0,\infty)$, and letting $\wt X_{r(t)}=X_{t}$, we get that as $t\to T$ (hence as $r\to \infty$), we have
\begin{align*}
d\log \wt X_r=\frac{\sqrt{\kappa}}{2} dB_r + \!\left( \frac{\kappa}{8}-1-\frac{\kappa}{4} (2d+2-2c) + o(1) \right) dr,
\end{align*}
where the term $o(1)$ tends to $0$ as $\theta_t\to \pi$ (hence as $r\to \infty$).
The process $\log X_t$ converges to $-\infty$ as $t\to T$ if and only if $\log \wt X_r$ converges to $-\infty$ as $r\to\infty$. This happens if and only if
\begin{align*}
\frac{\kappa}{8}-1-\frac{\kappa}{4} (2d+2-2c)=\frac{\kappa}{2} q_2(\kappa,\nu) -\frac12 <0.
\end{align*}
The inequality above is also equivalent to $\nu < \frac12-\kappa/16$.
Therefore, if $\nu \ge \frac12-\kappa/16$, then the chordal curve $\wt\gamma$ goes to $\infty$, corresponding to the event that $\gamma$ hits $x_0$, as in case (ii) of Proposition~\ref{prop:hsle_geom}. Otherwise if $\nu < \frac12-\kappa/16$, then $\wt\gamma$ hits $\Rb^-$ swallowing $z$, corresponding to the event that $\gamma(T) \in \ell_t$, as in case (iii) of Proposition~\ref{prop:hsle_geom}.
\end{proof}

\section{Construction of general radial restriction measures}\label{Sec:cons}
In this section, we will construct general radial restriction measures using the radial hSLE processes, and consequently prove Theorem~\ref{intro-thm:restriction}.
The strategy of this section is similar to that of \cite{MR1992830} (also see \cite[Section 5]{MR2060031}). Some extra care is needed in the analysis of the limiting behavior as the hSLEs reach their ends.

Throughout, we fix $\kappa\in(0,4]$ and $\mu, \nu$ in the range~\eqref{eq:range}. Let $a,b,c,d,e$ be given by~\eqref{eq:parameters}.
We will first construct a random set $K$ in Section~\ref{SS restriction}, then prove in Section~\ref{SS martingale} that it indeed satisfies radial $\kappa$-restriction and determine its parameters.

\subsection{Method of construction}\label{SS restriction}

Let us now explicitly construct the random set $K$. Let $\gamma$ be a radial hSLE$_\kappa(\mu,\nu)$ from $1$ aiming at the origin with a marked point at $e^{i0^-}$ (directly below $1$). Let $T$ be the stopping time defined by~\eqref{stoping_time}. The definition of $K$ depends on the parameters $\mu,\nu$ as follows, see Figure~\ref{hsle2}.

\vspace{1mm}
\begin{figure}[h!]
\centering
\includegraphics[width=\textwidth]{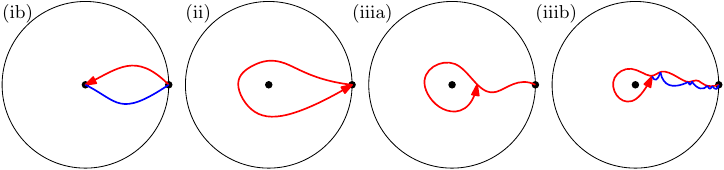}
\caption{Illustration of the construction of $K$ in different cases. We first run the red curves which are hSLE. Then in some cases, we run a second blue curve which is distributed as SLE$_\kappa(\rho)$.}
\label{hsle2}
\end{figure}
\vspace{-1mm}

\begin{enumerate}[(i)]
\item If $b=0$, then $\gamma$ is an SLE$_\kappa(\rho)$ curve where $\rho=2\kappa d$, according to (i) in Proposition~\ref{prop:hsle_geom}.
\vspace{-1mm}
\begin{itemize}
\item[(ia)] If $\nu=0$, then $\rho=0$, hence $\gamma$ is a radial SLE$_\kappa$. Let $K$ be $\gamma([0,\infty])$ which is a simple curve from $1$ to $0$.
\item[(ib)] If $\nu>0$, then in the domain $\Ub\setminus \gamma$, we grow a chordal SLE$_\kappa(\rho-2)$ curve $\gamma'$ from $1$ to $0$ with a marked point immediately to its right. Let $K$ be the compact set enclosed by $\gamma$ and $\gamma'$.
See Figure~\ref{hsle2}(ib).
\end{itemize}

\item If $b\not=0$ and $\nu\ge\frac12-\kappa/16$, then $\gamma$ will a.s.\ make a simple counterclockwise loop around the origin before returning to $1$ at time $T$, according to  (ii) in Proposition~\ref{prop:hsle_geom}. Let $K$ be the compact set enclosed by $\gamma([0,T])$. See Figure \ref{hsle2}(ii). 

\item  If $b\not=0$ and $\nu\in\!\left[0,\frac12-\kappa/16\right)$, then $\gamma$ will a.s.\ make a counterclockwise loop around the origin, before  intersecting its own left boundary at time $T$, according to  (iii) in Proposition~\ref{prop:hsle_geom}. 
\vspace{-1mm}
\begin{itemize}
\item[(iiia)] If $\nu=0$, then let $K$ be the compact set enclosed by $\gamma([0,T])$. See Figure~\ref{hsle2}(iiia).

\item[(iiib)] If $\nu>0$, then in the connected component of $\Ub\setminus \gamma([0,T])$ which does not contain the origin, we grow a SLE$_\kappa(\rho)$ curve $\gamma'$ from $1$ to $\gamma(T)$ with a marked point immediately to its right. Then let $K$ be the compact set enclosed by $\gamma$ and $\gamma'$. See Figure~\ref{hsle2}(iiib).
\end{itemize}
\end{enumerate}

\subsection{General restriction property} \label{SS martingale}
The goal of this section is to prove the following proposition.

\begin{proposition}\label{prop-restriction}
The set $K$ constructed in Section~\ref{SS restriction} satisfies radial $\kappa$-restriction property with exponents
\vspace{-3mm}
\begin{align}\label{alpha-beta}
\alpha=2 \mu,\quad \beta=\frac{1}\kappa+\nu+2 q_2(\kappa,\nu).
\end{align}
\end{proposition}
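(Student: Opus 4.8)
The plan is to follow the classical Lawler–Schramm–Werner strategy from \cite{MR1992830}: fix a test set $A\in\Qc$ and show that a suitable functional of the conformal maps $g_t$ and $f_{A_t}$ (where $A_t$ is the image of $A$ under the Loewner maps of $\gamma$, appropriately normalised) is a local martingale, then integrate. Concretely, I would let $g_t$ be the radial Loewner maps generating the hSLE curve $\gamma$, set $\tilde g_t = g_{A_t}\circ g_t$ where $g_{A_t}$ uniformizes the complement of the image of $A$, and let $h_t$ be the corresponding map on the other side so that $h_t\circ f_A = f_{A_t}\circ g_t$. Writing $\theta_t$ for the half-argument process as in \eqref{eq:theta}, and $\tilde\theta_t$ for its image under the conformal distortion by $A$, the candidate martingale should be of the form
\begin{align*}
M_t = \mathbf{1}_{\gamma[0,t]\cap A=\emptyset}\,|h_t'(0)|^{\alpha}\, h_t'(e^{iW_t})^{\text{something}}\,\left(\frac{G(\tilde\theta_t)}{G(\theta_t)}\right)\,\prod(\text{distortion factors})\,\exp\big(-c(\kappa) m_{\Ub}(\gamma[0,t],A)\big),
\end{align*}
with the powers $\alpha=2\mu$ and an exponent governing the behavior near $1$ chosen precisely so that the drift vanishes. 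The loop-measure term $m_{\Ub}(\gamma[0,t],A)$ evolves with a known rate (this is the computation behind the restriction property of SLE, see \cite{MR1992830, MR2118865, MR2518970}), and the fact that $G$ solves the ODE \eqref{diff-G} in Lemma~\ref{lem3.2} is exactly what is needed to cancel the remaining drift coming from the $\tilde\theta_t$-dynamics — this is the ``reason'' for the definition of $G$ alluded to in the remark before Lemma~\ref{lem3.2} and should be isolated as a lemma (``Lemma~\ref{lem:martingale}'', referenced in the text). I expect the bulk of the It\^o computation, including the precise bookkeeping of the $f_A'$, $g_t'$ and Schwarzian-type terms, to be relegated to Appendix~\ref{A1}.

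Once the local martingale $M_t$ is established, the second step is to take expectations and pass to the limit $t\to T$. Here one uses optional stopping together with uniform integrability, which requires controlling $M_t$ as $\gamma$ closes up its loop: in cases (ii) and (iii) of the construction, $\theta_t\to\pi$, and one must use the asymptotics \eqref{asym-G-pi} and \eqref{e2} of $G$ near $\pi$ to see that the boundary terms have the right limiting exponents, and in the subcritical cases (ib) and (iiib) one additionally runs the auxiliary SLE$_\kappa(\rho)$ curve $\gamma'$, for which the restriction property is already known (it is just the chordal/radial SLE$_\kappa(\rho)$ restriction formula \eqref{K-chordal}), and multiplies the two contributions using the conditional independence of $\gamma'$ given $\gamma$ together with the multiplicativity of the loop-measure term $m_{\Ub}$. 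The outcome is that $\mathbf{P}(K\cap A=\emptyset)$, suitably weighted, equals $|f_A'(0)|^\alpha f_A'(1)^\beta\exp(-c(\kappa) m_{\Ub}(K,A))$, i.e.\ $K$ satisfies \eqref{eq:gen}; reading off the constants from the exponents that made the drift vanish gives $\alpha=2\mu$ and, after collecting the $G$-asymptotic exponent at $1$ together with the SLE$_\kappa(\rho)$ contribution $\alpha(\kappa,\rho)$ with $\rho=2\kappa d$, the value $\beta=\frac1\kappa+\nu+2q_2(\kappa,\nu)$ as claimed in \eqref{alpha-beta}.

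The main obstacle, in my view, is not the martingale identity itself (that is a mechanical, if lengthy, It\^o computation once $G$ is chosen to solve \eqref{diff-G}) but the limiting argument at $t=T$: establishing uniform integrability of $M_t$ and showing that no mass is lost as the hSLE disconnects $x_0$ from $0$. One has to rule out the possibility that $\gamma$ accumulates against $A$ in a way that makes the indicator or the $h_t'$ factors degenerate, and one has to verify that on the event $\gamma[0,T]\cap A=\emptyset$ the maps $f_{A_t}$ converge to $f_A$ composed with the final uniformizing map in the appropriate sense (this is where ``some extra care is needed in the analysis of the limiting behavior'', as noted at the start of Section~\ref{Sec:cons}). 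A secondary technical point is checking, for the subcritical constructions (ib), (iiia), (iiib), that gluing the SLE$_\kappa(\rho)$ curve $\gamma'$ really produces a set in $\Omega$ (simply connected, meeting $\partial\Ub$ only at $1$) and that the two restriction formulas combine exactly, with the loop-soup term factoring as $m_{\Ub}(\gamma,A)+m_{\Ub}(\gamma',A\setminus\gamma\text{-side})$ plus the loops straddling both — this requires the restriction property of SLE$_\kappa(\rho)$ to be applied in the random domain $\Ub\setminus\gamma$, which is legitimate by conformal invariance and the Markov property but should be stated carefully.
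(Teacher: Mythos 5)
Your proposal follows essentially the same route as the paper: the local martingale $M_t=|h_t'(0)|^{2\mu}\,|h_t'(e^{iW_t})|^{(6-\kappa)/(2\kappa)}\,|h_t'(e^{iV_t})|^{\nu}\,\frac{G(\nu_t)}{G(\theta_t)}\exp\bigl(-\frac{c(\kappa)}{6}\int_0^t|Sh_s(W_s)|\,ds\bigr)$ whose drift vanishes precisely because $G$ solves~\eqref{diff-G} (Lemma~\ref{lem:martingale}, proved by It\^o calculus in Appendix~\ref{A1}), a Girsanov identification of the weighted law with $\Pb_A$, the careful limit $t\to T$ of each factor, and in cases (ib)/(iiib) the combination with the chordal restriction property of the auxiliary SLE$_\kappa(\rho)$ curve $\gamma'$. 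The only substantive detail left implicit in your ``distortion factors'' is the marked-point term $|h_t'(e^{iV_t})|^{\nu}$, whose limit at $t=T$ is not $1$ but exactly the constant $C^{\nu}$ appearing in the restriction formula for $\gamma'$ (Lemma~\ref{lem:limit2}) — this is the hinge that makes the two contributions multiply correctly.
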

We first remark that Proposition~\ref{prop-restriction} does imply Theorem~\ref{intro-thm:restriction}:
 The method in Section~\ref{SS restriction} constructs all $\kappa$-restriction measures with exponent $(\alpha,\beta)$ for $\kappa\in(0,4]$   and $(\alpha,\beta)$ in the range
$$\alpha\le \eta_\kappa(\beta), \quad \beta\ge (6-\kappa)/(2\kappa).$$
More precisely, the different cases of Section~\ref{SS restriction} correspond to the following ranges of parameters:
\begin{itemize}
\item[(ia)]  corresponds to $\alpha=\eta_\kappa(\beta), \beta= (6-\kappa)/(2\kappa)$; \vspace{-1mm}
\item[(ib)] corresponds to $\alpha=\eta_\kappa(\beta), \beta\ge (6-\kappa)/(2\kappa)$; \vspace{-1mm} 
\item[(ii)] corresponds to $\alpha< \eta_\kappa(\beta), \beta\ge (12-\kappa)(\kappa+4)/(16\kappa)$; \vspace{-1mm}
\item[(iiia)] corresponds to $\alpha< \eta_\kappa(\beta), \beta=(6-\kappa)/(2\kappa)$;  \vspace{-1mm}
\item[(iiib)] corresponds to $\alpha< \eta_\kappa(\beta),  \beta\in ((6-\kappa)/(2\kappa), (12-\kappa)(\kappa+4)/(16\kappa))$.
\end{itemize}
 The geometric properties of radial hSLEs given by Proposition~\ref{prop:hsle_geom} then imply the geometric properties of radial $\kappa$-restriction measures, as described in Theorem~\ref{intro-thm:restriction} (also see Figure~\ref{fig:restriction}).

To prove Proposition~\ref{prop-restriction}, we will  rely on an appropriate local martingale, given in Lemma~\ref{lem:martingale}.
Let us first define some quantities. See Figure~\ref{commutation}.
\begin{figure}[h!]
\centering
\includegraphics[width=0.6\textwidth]{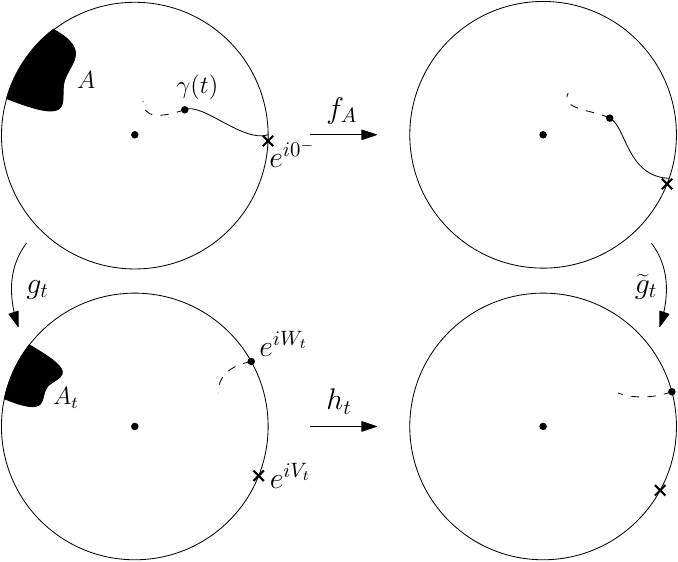}
\caption{The commutation diagram for the conformal maps $h_t$, $g_t$, $\wt g_t$, $f_A$.}
\label{commutation}
\end{figure}
Let $\Fc_t$ be the filtration of the Brownian motion used to generate $\gamma$. For all $A\in\Qc$, let $\tau$ be the first time that $\gamma$ intersects $A$. Recall $T$ is the stopping time defined by~\eqref{stoping_time}.
For all $t<T\wedge \tau$, let $A_t=g_t(A)$ and $h_t=f_{A_t}$.
Let $\nu_t=\frac12\arg (h_t(e^{iW_t})-h_t(e^{iV_t})).$ 
We also recall the Schwarzian derivative
\begin{align}\label{eq:Sch}
S f(z)=\frac{f'''(z)}{f'(z)} -\frac{3f''(z)^2}{2 f'(z)^2}.
\end{align}

\begin{lemma}\label{lem:martingale}
The following is a local martingale for the filtration $(\Fc_t)_{0\le t<T\wedge \tau}$:
\begin{align*}
M_t=&|h_t'(0)|^{2\mu} \, |h_t'(e^{iW_t})|^{(6-\kappa)/(2\kappa)} \, |h_t'(e^{iV_t})|^\nu \, \frac{G(\nu_t)}{G(\theta_t)}\, \exp\left( \int_0^t -\frac{c(\kappa)}{6} |Sh_s(W_s)| ds \right).
\end{align*}
\end{lemma}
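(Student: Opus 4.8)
\textbf{Proof plan for Lemma~\ref{lem:martingale}.}
The strategy is the standard one for establishing restriction-type martingales for SLE variants, following \cite{MR1992830} and \cite[Section~5]{MR2060031}: compute the It\^o derivative of $\log M_t$ and verify that the drift vanishes, using the differential equation~\eqref{diff-G} satisfied by $G$ to cancel the terms coming from the hypergeometric factor. First I would collect the Loewner-flow input. Differentiating the radial Loewner equation~\eqref{radial eq} in the standard way yields evolution equations for $h_t'(0)$, $h_t'(e^{iW_t})$, $h_t'(e^{iV_t})$, and for $\nu_t$; these are routine but lengthy, so I would relegate the bulk of them to Appendix~\ref{A1} and only quote the outcome here. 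The key quantities are: (a) $d\log|h_t'(0)|$ has no martingale part and a drift expressible through $h_t'(0)$-independent conformal data; (b) $d\log|h_t'(e^{iW_t})|$ has both a martingale part (proportional to $dW_t$ via the chain rule through the moving point $e^{iW_t}$) and a drift involving the Schwarzian $Sh_t(W_t)$ — this is exactly where the $|h_t'(e^{iW_t})|^{(6-\kappa)/(2\kappa)}$ exponent and the $-\tfrac{c(\kappa)}{6}|Sh_s(W_s)|$ term in the exponential are designed to conspire; (c) $d\log|h_t'(e^{iV_t})|$ with its drift; and (d) $d\nu_t$, which one checks satisfies an SDE of the same shape as~\eqref{eq:theta} for $\theta_t$ but with $\nu_t$ in place of $\theta_t$ and with the diffusion coefficient modified by the factor $h_t'(e^{iW_t})$ appearing through the chain rule (this modification is what is absorbed by the change of time implicit in comparing $G(\nu_t)$ with $G(\theta_t)$).

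Next I would assemble $d\log M_t$ by linearity. Write $M_t = \exp(Y_t)$ where
\[
Y_t = 2\mu \log|h_t'(0)| + \tfrac{6-\kappa}{2\kappa}\log|h_t'(e^{iW_t})| + \nu\log|h_t'(e^{iV_t})| + \log G(\nu_t) - \log G(\theta_t) - \int_0^t \tfrac{c(\kappa)}{6}|Sh_s(W_s)|\,ds.
\]
It\^o's formula gives $dM_t = M_t(dY_t + \tfrac12 d\langle Y\rangle_t)$, so it suffices to show the finite-variation part of $dY_t + \tfrac12 d\langle Y\rangle_t$ vanishes. The martingale part of $dY_t$ comes only from the terms involving the moving points $e^{iW_t}$, $e^{iV_t}$ and from $d\log G(\theta_t)$, $d\log G(\nu_t)$; I would collect these to read off $d\langle Y\rangle_t$. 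The drift contributions split into three groups: (1) Schwarzian terms, which cancel against the explicit exponential provided $c(\kappa)$ is the loop-soup intensity~\eqref{kappa-c} — this uses the well-known identity relating $Sh$ to the $h_t'(e^{iW_t})$-drift with coefficient $(6-\kappa)/(2\kappa)$, i.e.\ exactly the restriction exponent for the curve itself; (2) terms proportional to $\cot^2\theta_t$ and $\csc^2\theta_t$ (and their $\nu_t$ analogues coming from $G$), together with the second-derivative term $\tfrac{\kappa}{8}G''/G$ from It\^o applied to $\log G$; these are precisely the combination appearing in~\eqref{diff-G}, so invoking Lemma~\ref{lem3.2} forces them to a constant $-e$; (3) the remaining constant drifts from $\log|h_t'(0)|$, $\log|h_t'(e^{iV_t})|$ and from the $G$-terms, which must sum to zero — and this is where the specific choices $\alpha = 2\mu$ and the value of $e = 2\mu - (6-\kappa)(\kappa-2)/(8\kappa)$ in~\eqref{eq:parameters} enter: the residual constant is a linear combination of $\mu$, $\nu$, $q_2(\kappa,\nu)$, $d$, $e$, and $1/\kappa$ that vanishes identically by the definitions in~\eqref{eq:parameters}.

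The main obstacle, I expect, is step (c)/(d): correctly tracking how the driving term $dW_t = \sqrt{\kappa}\,dB_t + \tfrac{\kappa}{2}(G'(\theta_t)/G(\theta_t))\,dt$ from~\eqref{driving-function} propagates through the conformal map $h_t$ to the image point, so that $d\nu_t$ is written in the same functional form as the $\theta_t$-SDE up to the time change by $|h_t'(e^{iW_t})|$. This bookkeeping — computing $\partial_t h_t$ (which is \emph{not} simply a Loewner equation in $\Hb$ because the target point $0$ of the radial flow is also being mapped) and then the cross-variations between $d\nu_t$, $d\log|h_t'(e^{iW_t})|$ and $dW_t$ — is the technical heart of the computation. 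I would organize it by first deriving the PDE for $h_t$ (differentiate $h_t\circ g_t = \wt g_t \circ f_A$ from the commutation diagram in Figure~\ref{commutation}), then extract the needed Taylor coefficients at $e^{iW_t}$, $e^{iV_t}$, and $0$. Once these are in hand, the drift cancellation is a finite, if tedious, algebraic verification using~\eqref{diff-G} and~\eqref{eq:parameters}; I would present the final collected drift, indicate the three cancellation mechanisms above, and defer the raw expansions to Appendix~\ref{A1}.
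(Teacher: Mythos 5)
Your plan is correct and is essentially the paper's own proof: It\^o calculus on $\log M_t$, with $\partial_t h_t$ derived from the commutation relation $h_t=\wt g_t\circ f_A\circ g_t^{-1}$, the Schwarzian drift cancelled by the exponential via the usual central-charge identities for the exponent $(6-\kappa)/(2\kappa)$, and the remaining drift organized so that the ODE~\eqref{diff-G} (applied once at $\theta_t$ and once at $\nu_t$) together with the definition of $e$ in~\eqref{eq:parameters} kills everything — exactly as carried out in Appendix~\ref{A1}. The only loose point is your claim that $d\nu_t$ has "the same shape" as the $\theta_t$-SDE up to a time change: its drift in fact involves $G'(\theta_t)/G(\theta_t)$ and an extra $\phi_t''(W_t)$ term, but since you correctly identify this bookkeeping as the technical heart to be done explicitly, this does not affect the soundness of the plan.
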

\begin{remark}\label{remark}
We remark that, the form of the local martingale $(M_t)$ above can be guessed from the form of the driving function~\eqref{driving-function}. 
One possible point of view is to see $G$ as the ``partition function'' of the hSLE (see \cite{MR2518970,MR2571956}) and then guess the form of this local martingale in analogy to the restriction martingales for SLE$_\kappa(\rho)$s.
However, such arguments can not be made as a proof of this lemma. 
We will actually prove it by performing It\^o calculus, and we postpone it to Appendix \ref{A3}. 
In particular, as we will show in Appendix \ref{A3}, $(M_t)$ is a local martingale only if  $G$ satisfies~\eqref{diff-G}.
\end{remark}

For the moment, $M_t$ is only defined for $t<T\wedge \tau$. We will restrict ourselves on the event $\{T<\tau\}$ or
equivalently $\{\gamma\cap A=\emptyset\}$ (here and in the sequel, we sometimes denote the set $\gamma([0,T])$ by $\gamma$) and will  define $M_{T}$ as the limit of $M_t$ as $t\to T$ (note that if $T>\tau$, then $M_\tau=0$ a.s.,  although we do not use this fact). 
 Let us now analyse the behavior of $M_t$ as $t\to T$.

\begin{lemma}\label{lem:limit}
We restrict ourselves on the event $\gamma \cap A=\emptyset$. For all the cases (ia), (ib), (ii), (iiia) and (iiib), as $t\to T$, the three terms  $|h_t'(0)|$,  $ |h_t'(e^{iW_t})|$ and $G(\nu_t)/G(\theta_t)$  tend to $1$.
For the case (ii), we have in addition that $ |h_t'(e^{iV_t})|$ tends to $1$.
\end{lemma}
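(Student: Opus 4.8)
\textbf{Proof plan for Lemma~\ref{lem:limit}.}

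The plan is to analyze the behavior as $t\to T$ of each of the three (or four) conformal-map factors in $M_t$, by understanding how the geometry of $\gamma([0,t])$ and of $A_t=g_t(A)$ degenerates at the stopping time. The guiding principle is the following: on the event $\gamma\cap A=\emptyset$, as $t\to T$ the curve $\gamma([0,t])$ grows to enclose a region whose boundary near the ``pinch point'' becomes a thin tentacle, and $h_t=f_{A_t}$ is a conformal map that fixes $0$ and $1$ of a domain $\Ub\setminus A_t$ in which $A_t$ is being pushed away from the relevant points. First I would set up, for each of the cases (ia), (ib), (ii), (iiia), (iiib), a precise description of what $T$ is and where the images $g_t(0)$, $g_t(e^{iW_t})$, $g_t(e^{iV_t})$ go: in cases (ib), (iiia), (iiib), (ii) the disconnection time $T$ is finite and $\theta_t\to\pi$, so $e^{iW_t}$ and $e^{iV_t}=x_t$ collide on the unit circle, while in case (ia) we have $T=\infty$ and $\gamma$ hits $0$.

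The key steps, in order:
\begin{enumerate}[(1)]
\item \textbf{The factor $|h_t'(0)|$.} Since $0\notin A$ and $\gamma$ stays away from $A$, the sets $A_t=g_t(A)$ remain at positive distance (in the $\Ub$-metric seen from $0$) from $0$; in fact $A_t$ gets pushed toward the part of $\partial\Ub$ near where $e^{iW_t}$ and $x_t$ collide, so the harmonic measure of $A_t$ from $0$ tends to $0$. By Koebe/distortion estimates for the normalized map $f_{A_t}$ (fixing $0$ and $1$), $f_{A_t}'(0)\to 1$. This is the most robust of the three and works uniformly across all cases.
\item \textbf{The factor $|h_t'(e^{iW_t})|$.} This is the boundary-derivative of $f_{A_t}$ at the ``tip'' point $e^{iW_t}$. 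On $\{\gamma\cap A=\emptyset\}$, the point $e^{iW_t}$ and the set $A_t$ stay separated; as $t\to T$, $A_t$ is localized near $x_t$ while $e^{iW_t}$ also approaches $x_t$ but — crucially — from the side not occupied by $A_t$ (because $A_t\in\Qc$ with $0,1\notin A$, and the curve approaches $1=e^{iW_T}$). One shows the ratio of harmonic measures degenerates so that the local behavior of $f_{A_t}$ near $e^{iW_t}$ is asymptotically a rotation, giving $|f_{A_t}'(e^{iW_t})|\to 1$. Here I would use the explicit form $\nu_t=\tfrac12\arg(h_t(e^{iW_t})-h_t(e^{iV_t}))$ together with the collision $\theta_t\to\pi$ to control things.
\item \textbf{The factor $G(\nu_t)/G(\theta_t)$.} Both $\theta_t\to\pi$ and $\nu_t\to\pi$ as $t\to T$ (the image points collide the same way under $h_t$ since $A_t$ is squeezed off to one side), so one needs the asymptotics \eqref{asym-G-pi}: $G(\theta)\sim (C_1-C_2)|\pi-\theta|^{2d+2-2c}$. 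Thus $G(\nu_t)/G(\theta_t)\sim |(\pi-\nu_t)/(\pi-\theta_t)|^{2d+2-2c}$, and the content of this step is to show $(\pi-\nu_t)/(\pi-\theta_t)\to 1$. This reduces to a conformal-distortion statement: the angle subtended at $x_t$ between $e^{iW_t}$ and the ``correct'' arc is preserved to leading order by $h_t$ because $A_t$ retreats to the far side. I expect to need the derivative estimates from Appendix~\ref{A1} / the coordinate-change computations in Proposition~\ref{prop:hsle_geom}.
\item \textbf{The extra factor $|h_t'(e^{iV_t})|$ in case (ii).} In case (ii) the curve closes up exactly at $x_0$, i.e.\ $e^{iW_T}=e^{iV_T}=1$ is an interior-of-boundary point of $\Ub\setminus\gamma$ separated from $A$, so $f_{A_t}$ is asymptotically a rotation near $e^{iV_t}$ as well, by the same argument as step (2). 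In the other cases $\gamma(T)$ is not $x_0$ and this factor need not converge to $1$ (which is why the lemma only claims it for (ii)); correspondingly it gets absorbed differently in the eventual boundary term — but that is for the next lemma, not here.
\end{enumerate}

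The main obstacle will be step (3) — controlling the ratio $G(\nu_t)/G(\theta_t)$, equivalently showing $(\pi-\nu_t)/(\pi-\theta_t)\to1$ — because it requires a uniform comparison of two harmonic measures / subtended angles near a point where \emph{both} $e^{iW_t}$ and $x_t$ are pinching against $\partial\Ub$ simultaneously, with $A_t$ squeezed into the same corner. A naive Koebe bound only gives boundedness, not convergence to $1$; I expect to need the quantitative Loewner-flow estimates (of the type used in \cite{MR1992830} and worked out in Appendix~\ref{A1}) showing that the ``amount'' of $A_t$ near the pinch point shrinks faster than the pinch itself, which is exactly where the hypothesis $\gamma\cap A=\emptyset$ (keeping $A$ a fixed positive distance from the limiting configuration) is used. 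I would organize the argument by first handling case (ii), where the geometry is cleanest (the curve closes on itself at a single boundary point far from $A$), and then treating (ib), (iiia), (iiib) by noting that the relevant pinch happens along the hSLE part $\gamma$ before any SLE$_\kappa(\rho)$ piece $\gamma'$ is grown, so the limit is governed by the same estimate. The case (ia) is separate and easy: $T=\infty$, $g_t(A)$ shrinks to nothing as $t\to\infty$, and all factors trivially tend to $1$.
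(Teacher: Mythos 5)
Your plan is correct and follows essentially the same route as the paper: harmonic-measure comparisons seen from the origin to show that $A_t$ collapses much faster than the arc separating $e^{iW_t}$ from $e^{iV_t}$, the asymptotics \eqref{asym-G-pi} to reduce $G(\nu_t)/G(\theta_t)\to1$ to $(\pi-\nu_t)/(\pi-\theta_t)\to1$, and the observation that case (ii) is special because $\gamma(T)=x_0$ so that a fixed boundary arc adjacent to $e^{iV_t}$ dominates $A_t$ in harmonic measure. One small correction: in step (2), $A_t$ actually sits \emph{on} the collapsing counterclockwise arc from $e^{iW_t}$ to $e^{iV_t}$ (not on the far side of $e^{iW_t}$); the estimate goes through because $A_t$ is much closer to $e^{iV_t}$ than to $e^{iW_t}$ in harmonic measure, which is the comparison your plan correctly identifies as the crux.
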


\begin{proof}
We will prove the lemma for the cases (ii) and (iiia), (iiib). The proof in the cases  (ia) and (ib) is slightly different, but is in fact simpler and follows from the same type of reasoning, hence we leave it to the reader.
We illustrate the case (iiib) in Figure~\ref{fig:limit}. 

Suppose that we are in the cases (ii), (iiia) or (iiib).
Since $\gamma([0,T])$ forms a closed loop around the origin, as $t$ tends to $T$, the harmonic measure seen from the origin in the domain $\Ub\setminus \gamma([0,t])$ of the counterclockwise part of boundary from $e^{i0^-}$ to $\gamma(t)$ tends to $1$. 
This implies that after conformally mapping $\Ub\setminus \gamma([0,t])$ to $\Ub$ by $g_t$,  the point $e^{iV_t}$ is counterclockwisely very close to $e^{iW_t}$.
Moreover, $A_t$ is attached to the counterclockwise arc from $e^{iW_t}$ to $e^{iV_t}$. The harmonic measure of $A_t$ seen from the origin is also small, because a Brownian motion started at the origin has very small probability of hitting $A_t$ before $\gamma([0,t])$ as $t\to T$ (since it has to first exit the ``quasi-loop'' formed by $\gamma([0,t])$ without hitting it). This already implies that  $|h_t'(0)|$ tends to $1$.

Let $a_t$ be the point in $A_t\cap \partial \Ub$ which is the closest to $e^{iW_t}$. The harmonic measure seen from the origin of the counterclockwise arc from $e^{iW_t}$ to $a_t$ is much larger than the harmonic measure of $A_t$. This is because in $\Ub\setminus \gamma([0,t])$, if we condition a Brownian motion started from the origin to stop at the clockwise part of boundary from $e^{i0^-}$ to $\gamma(t)$, then with conditional probability tending to $1$, it is going to stop in a neighborhood of the tip $\gamma(t)$, rather than hitting $A_t$. This is because in order to hit that part of boundary, the Brownian motion has to first exit the ``quasi-loop'', hence get very close to the tip $\gamma(t)$. From there, it has macroscopic distance to $A_t$, hence is much more likely to hit somewhere near $\gamma(t)$ before $A_t$.
This proves that $ |h_t'(e^{iW_t})|$ tends to $1$.
Finally, by~\eqref{asym-G-pi}, we know that $G(\nu_t)/G(\theta_t)$ is asymptotical to $( H_t/|W_t -V_t| )^{2d+2-2c}$, where  $H_t$ is the harmonic measure seen from the origin in $\Ub$ of the counterclockwise arc from $h_t(e^{iW_t})$ to $h_t(e^{iV_t})$. Since the harmonic measure of $A_t$ in $\Ub$ seen from $0$ is much smaller than $|W_t-V_t|$, we have that $( H_t/|W_t -V_t| )$ tends to $1$. Therefore $G(\nu_t)/G(\theta_t)$ also tends to $1$ as $t\to T$.

\begin{figure}[h]
\centering
\includegraphics[width=0.7\textwidth]{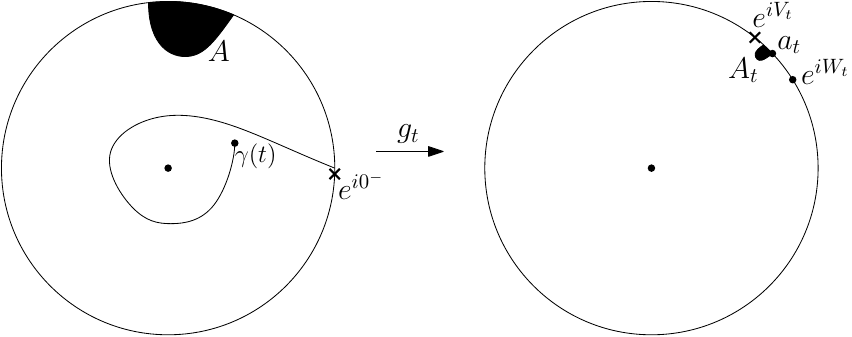}
\caption{Limiting behavior as $t\to T$ in case (iiib): The counterclockwise arc from $e^{iW_t}$ to $e^{iV_t}$ gets very small, and $A_t$ is attached to that arc. The size of $A_t$ is again much smaller than $|V_t-W_t|$. Moreover, $A_t$ is much closer to $e^{iV_t}$ than to $e^{iW_t}$. This explains why $ |h_t'(e^{iW_t})|$ tends to $1$, but $ |h_t'(e^{iV_t})|$ does not.}
\label{fig:limit}
\end{figure}

Now, suppose that we are in case (ii), and let us show that $ |h_t'(e^{iV_t})|$ tends to $1$ as $t\to T$. 
Let $b_t$ be the point in $A_t\cap \partial \Ub$ which is the closest to $e^{iV_t}$.
Then, seen from $0$, the harmonic measure of the counterclockwise arc from $b_t$ to $e^{iV_t}$ is much bigger than the harmonic measure of $A_t$. Indeed, in $\Ub\setminus \gamma([0,t])$, if we condition a Brownian motion started from the origin to stop at the clockwise part of boundary from $e^{i0^-}$ to $\gamma(t)$, then for any fixed $\eps>0$ (we would choose $\eps$ small enough so that $A$ is disjoint from the clockwise arc from $e^{i0^-}$ to $e^{-i\eps}$), with conditional probability bounded from below as $t\to T$, it will stop on the clockwise arc from $e^{i0^-}$ to $e^{-i\eps}$ (Note that $\gamma(t)$ tends to $e^{i0^+}$ in case (ii). In other cases, this statement is not true).
However, the conditional probability that such a Brownian motion hits $A$ tends to $0$. Applying the conformal map $g_t$, it then follows that in the image domain $\Ub$, seen from $0$, the harmonic measure of $A_t$ is much smaller than that of the counterclockwise arc from $b_t$ to $e^{iV_t}$. This then implies that $ |h_t'(e^{iV_t})|$ tends to $1$.
\end{proof}

It now only remains to analyse the limit of $ |h_t'(e^{iV_t})|$ for the cases (ib) and (iiib) (for the cases (ia) and (iiia), since $\nu=0$, the term $ |h_t'(e^{iV_t})|$ does not exist in the local martingale $M_t$).  Let $D_\gamma$ be the connected component of $\Ub\setminus \gamma([0,T])$ which is connected to $\partial \Ub$.
Let $\wh f$ be some conformal map from $D_\gamma$ onto $\Ub$ that sends $\gamma(T)$ to $-1$ and $1$ to $1$. Let $\wh h_A$ be some conformal map from $\Ub\setminus \wh f(A)$ onto $\Ub$ that leaves $-1,1$ fixed. There is one degree of freedom in the choice of $\wh f$ or $\wh h_A$ (and we will fix them later), but the quantity $|\wh h_A'(-1) \wh h_A'(1)|$ doesn't depend on the choice of $\wh f$ or $\wh h_A$, and we denote it by $C$.

The quantity $C$ has the following meaning: Note that, conditionally on $\gamma$, in  the domain $D_\gamma$, $\gamma'$ is a SLE$_\kappa(\rho-2)$, hence satisfies (one-sided) chordal $\kappa$-restriction with parameter $\nu$ (see Definition~\ref{def:gen_chordal} and remark below): 
Let $\Pb^\gamma$ be the conditional law of $\gamma'$.
Let $f_{\gamma, A}$ be a conformal map from $D_\gamma$ onto  $D_\gamma\setminus A$ that leaves $\gamma(T)$ and $1$ fixed. There is one degree of freedom in the choice of $f_{\gamma, A}$, but the (conditional) law of $f_{\gamma, A}(\gamma')$ is independent of the choice, and we denote it by $\Pb^\gamma_A$.
Then we have
\begin{align}\label{eq:C}
\frac{d\Pb^\gamma(\gamma')}{d \Pb^\gamma_A(\gamma')} \mathbf{1}_{\gamma'\cap A=\emptyset} =\mathbf{1}_{\gamma'\cap A=\emptyset}\, C^\nu \exp\!\left(-c(\kappa) m_{D_\gamma}(\gamma', A) \right).
\end{align}
We now state the following lemma.

\begin{lemma}\label{lem:limit2}
On the event $\gamma\cap A=\emptyset$, in the cases (ib) and (iiib),  as $t\to T$, $ |h_t'(e^{iV_t})|$ tends to $C$.
\end{lemma}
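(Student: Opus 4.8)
The plan is to identify the limiting value of $|h_t'(e^{iV_t})|$ by comparing two ways of encoding the conditional restriction behavior of the second curve $\gamma'$: once via the boundary derivative $C=|\wh h_A'(-1)\wh h_A'(1)|$ associated to a fixed choice of uniformizing map of $D_\gamma$, and once via the conformal maps $h_t=f_{A_t}$ appearing in the Loewner flow. The key observation is that as $t\to T$, the domain $\Ub\setminus\gamma([0,t])$, when pushed forward by $g_t$, degenerates in the controlled way described in Lemma~\ref{lem:limit}: the counterclockwise arc from $e^{iW_t}$ to $e^{iV_t}$ collapses, and $A_t$ sits attached to that vanishing arc, much closer to $e^{iV_t}$ than to $e^{iW_t}$. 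So $e^{iW_t}$ ``sees'' $A_t$ only through a small window, which is why $|h_t'(e^{iW_t})|\to 1$, whereas $e^{iV_t}$ remains at macroscopic distance from $A_t$ within the relevant subdomain, so $|h_t'(e^{iV_t})|$ retains a nontrivial limit governed by the geometry of $\gamma$ and $A$ inside $D_\gamma$ — and that limit should be exactly $C$.

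First I would set up the comparison carefully. Fix the degree of freedom in $\wh f$: choose $\wh f$ to be the conformal map from $D_\gamma$ onto $\Ub$ sending $\gamma(T)\mapsto -1$, $1\mapsto 1$, and (say) with a third normalization, e.g. fixing the image of some interior reference point or equivalently fixing $|\wh f'(1)|$. Then $\wh h_A$ is the map from $\Ub\setminus\wh f(A)$ onto $\Ub$ fixing $\pm 1$, and $C=|\wh h_A'(-1)\wh h_A'(1)|$ is independent of these choices by the standard argument (a M\"obius transformation of $\Ub$ fixing $\pm 1$ scales the two boundary derivatives by reciprocal factors). Next, for $t<T$, factor the map $h_t=f_{A_t}$: we have $g_t:\Ub\setminus\gamma([0,t])\to\Ub$ and $h_t:\Ub\setminus A_t\to\Ub$, and on the component $D_\gamma^{(t)}$ of $\Ub\setminus\gamma([0,t])$ attached to $\partial\Ub$, the composition $h_t\circ g_t$ uniformizes $D_\gamma^{(t)}\setminus A$ onto $\Ub$. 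I would then write $|h_t'(e^{iV_t})|$ as a ratio of boundary derivatives of two uniformizing maps of $D_\gamma^{(t)}$ evaluated at the boundary point $x_0$ (the preimage of $e^{iV_t}$): namely the map $g_t$ itself versus the map $h_t\circ g_t$. Using the chain rule, $|h_t'(e^{iV_t})|=|(h_t\circ g_t)'(x_0)|/|g_t'(x_0)|$, so the task reduces to comparing the boundary scaling at $x_0$ of the uniformizing map of $D_\gamma^{(t)}$ with that of $D_\gamma^{(t)}\setminus A$.

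The main step is then a continuity/convergence argument: as $t\to T$, $D_\gamma^{(t)}\to D_\gamma$ in the Carathéodory sense with respect to an interior point, and crucially the marked boundary point $e^{i0^-}$ (preimage of $\gamma(t)$ side) converges to $\gamma(T)$ while the image $e^{iV_t}$ of $x_0$ under $g_t$ converges to a boundary point that, after renormalizing the maps consistently with the $\wh f$ normalization, matches the picture for $D_\gamma$. Concretely, I would compose with M\"obius maps of $\Ub$ to bring $g_t$ into the normalization of $\wh f$ (sending the tip to $-1$, $1$ to $1$, and matching the third normalization), pass to the limit using Carathéodory kernel convergence together with boundary-regularity of the maps near the non-degenerate boundary point $x_0$ (here one uses that $\gamma(T)$ and $A$ stay bounded away from $x_0$ inside $D_\gamma$, so the boundary behavior at $x_0$ is controlled), and conclude that the ratio $|(h_t\circ g_t)'(x_0)|/|g_t'(x_0)|$ converges to $|\wh h_A'(-1)\wh h_A'(1)|$... wait, more precisely to the analogous ratio for $\wh f$ versus $\wh h_A\circ\wh f$ at $x_0$, which by the chain rule is $|\wh h_A'(\wh f(x_0))|$. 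So I should be a bit more careful: the quantity that naturally appears is $|\wh h_A'(\wh f(x_0))|$, and I must check that $\wh f(x_0)\to -1$ as well (i.e. $x_0$ is carried to the tip in the limit), so that $|\wh h_A'(\wh f(x_0))|\to|\wh h_A'(-1)|$, and then account for the normalization mismatch at $1$ to recover the full product $|\wh h_A'(-1)\wh h_A'(1)|=C$. Tracking these normalizations and checking that $\wh f(x_0)\to -1$ (equivalently, that the harmonic measure from the origin of the arc between the image of $x_0$ and the tip goes to zero, which is precisely the content of the collapse in Lemma~\ref{lem:limit}) is where the real work lies.

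\textbf{Main obstacle.} I expect the chief difficulty to be the bookkeeping of the conformal normalizations in the limit: $h_t$ is pinned by fixing $0,1$, whereas $\wh f,\wh h_A$ are pinned by fixing $\pm 1$ on $\partial\Ub$, and one must show these two normalization conventions become compatible as $t\to T$ and that the ``extra'' boundary derivative factor at $e^{iW_t}$ (which tends to $1$ by Lemma~\ref{lem:limit}) is exactly what reconciles a single boundary derivative $|\wh h_A'(-1)|$ with the symmetric product $C=|\wh h_A'(-1)\wh h_A'(1)|$. Establishing the Carathéodory convergence with uniform control of boundary derivatives near $x_0$ — and in particular proving that $x_0$ is swept to the degenerating tip — relies on the harmonic-measure estimates already proved in Lemma~\ref{lem:limit}, so the analytic content is in place; the remaining task is to assemble it into the clean statement $|h_t'(e^{iV_t})|\to C$ without sign or normalization errors.
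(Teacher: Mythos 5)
Your overall strategy --- identify the limit of $|h_t'(e^{iV_t})|$ with a conformally defined quantity of the limiting configuration $(D_\gamma,\gamma(T),1,A)$, using the harmonic-measure collapse estimates from Lemma~\ref{lem:limit} --- is in the same spirit as the paper's, but the step where you identify that quantity contains an error that leaves the main point unproved. The prime end $x_0=e^{i0^-}$ is \emph{not} swept to the tip: as a prime end of $D_\gamma$ it is sent by $\wh f$ to $1$, while $\gamma(T)$ is sent to $-1$. (The images $e^{iV_t}$ and $e^{iW_t}$ do collapse to a common point under $g_t$, but they correspond to distinct prime ends of $D_\gamma$ and are separated again by the renormalizing M\"obius maps you propose to insert.) Consequently the quantity your Carath\'eodory argument would naturally produce is $|\wh h_A'(1)|$ for \emph{some} choice of $\wh h_A$ --- and a single boundary derivative of $\wh h_A$ is not well defined: it changes under the one-parameter family of admissible $\wh h_A$, since a M\"obius self-map of $\Ub$ fixing $\pm1$ rescales the derivatives at $-1$ and $1$ by reciprocal factors, as you yourself note. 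Only the product $|\wh h_A'(-1)\,\wh h_A'(1)|=C$ is an invariant. So the heart of the lemma is precisely to show that the normalization singled out by the Loewner flow makes the single derivative at the image of $x_0$ equal to the full product; your sketch defers exactly this step, and the sentence about the factor at $e^{iW_t}$ ``reconciling'' the single derivative with the product rests on the misidentified marked points.

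The paper closes this gap by working throughout with the invariant product: it defines $C_t$ as the product of the two boundary derivatives of $\wh h_t$ at the two marked points of the time-$t$ configuration (a quantity independent of all choices), and evaluates it with two different normalizations --- once with $x_1=-1$, $x_2=1$ and $\wh f_t$ normalized so that it converges to $\wh f$, which gives $C_t\to C$; and once with $\wh f_t=g_t$, $x_1=e^{iW_t}$, $x_2=e^{iV_t}$, which gives $C_t\sim |h_t'(e^{iW_t})|\,|h_t'(e^{iV_t})|$ up to the M\"obius correction factor $H_t/|W_t-V_t|\to1$. Combining this with $|h_t'(e^{iW_t})|\to1$ from Lemma~\ref{lem:limit} yields the claim. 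If you wish to keep your single-derivative formulation, you would still need to reproduce essentially this two-normalization computation to pin down which $\wh h_A$ appears; a Carath\'eodory argument alone cannot do it, because in the $g_t$-picture the set $A_t$ collapses at the same scale as its distance to $e^{iV_t}$ (this is exactly why the limit is not $1$), so there is no uniform boundary control at that point before the M\"obius renormalization is performed.
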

\begin{proof}
We will prove the lemma for the case (iiib). The case (ib) is easier and can be proven with similar ideas.

Fix $x_1, x_2\in\partial \Ub$ distinct and $t\in(0,T)$. Let $\wh f_t$ be some conformal map from $\Ub\setminus \gamma([0,t])$ onto $\Ub$ that sends $\gamma(t)$ to $x_1$ and $1$ to $x_2$.
Let $\wh h_t$ be some conformal map from $\Ub\setminus \wh f_t(A)$ onto $\Ub$ that leaves $x_1,x_2$ fixed. 
There is one degree of freedom in the choice of $\wh f_t$ or $\wh h_t$ (and we will fix them later), but the quantity $|\wh h_t'(-1) \wh h_t'(1)|$ doesn't depend on the choices of $\wh f_t, \wh h_t$ or the points $x_1, x_2$, and we denote it by $C_t$.

Here is one possible set of choices: We fix $x_1=-1$ and $x_2=1$. We choose $\wh f_t$ with the additional condition that $\wh f_t'(-1)=1$. Then, as $t\to T$, $\wh f_t$ converges to a conformal map $\wh f$ from $D_\gamma$ onto $\Ub$ that sends $\gamma(T)$ to $-1$ and $1$ to $1$ and such that $\wh f'(-1)=1$. This implies that $C_t$ tends to $C$ as $t\to T$.

Here is another possible set of choices: For each $t\in (0,T)$, we fix $x_1=e^{iW_t}$, $x_2=e^{i V_t}$ and $\wh f_t =g_t$.
Let $\wh h_t=s_t\circ h_t$, where $s_t$ is some conformal map from $\Ub$ onto itself that sends $h_t(e^{iW_t})$, $h_t(e^{iV_t})$ to $e^{iW_t}, e^{iV_t}$. The quantity $|s_t'(h_t(e^{iW_t})) s_t'(h_t(e^{iV_t}))|$ doesn't depend on the choice of $s_t$. As we have explained in the proof of Lemma~\ref{lem:limit} (also see Figure~\ref{fig:limit}), the points $e^{iW_t}$ and $e^{iV_t}$ tend to be very close and so do the points $h_t(e^{iW_t})$ and $h_t(e^{iV_t})$.
Therefore $|s_t'(h_t(e^{iW_t})) s_t'(h_t(e^{iV_t}))|$ is asymptotical to  $H_t/|W_t -V_t| $, where  $H_t$ is the harmonic measure seen from the origin in $\Ub$ of the counterclockwise arc from $h_t(e^{iW_t})$ to $h_t(e^{iV_t})$. This quantity tends to $1$ by the proof of Lemma~\ref{lem:limit}. This implies that $C_t$ is asymptotical to $|h_t'(e^{iW_t}) h_t'(e^{iV_t})|$. Since $|h_t'(e^{iW_t})|$ tends to $1$, we actually have that  $C_t$ is asymptotical to $|h_t'(e^{iV_t})|$. Since $C_t$ tends to $C$, this implies the lemma. 
\end{proof}

\begin{lemma}\label{lem:limitMT}
On the event $\gamma\cap A=\emptyset$, as $t\to T$, $M_t$ tends to a limit which we denote by $M_{T}$. Moreover,
\begin{itemize}
\item For the cases (ia), (ii), (iiia), we have $M_{T} \mathbf{1}_{\gamma\cap A=\emptyset}=\mathbf{1}_{\gamma\cap A=\emptyset}\, \exp \!\left(-  c(\kappa) m_\Ub(\gamma, A) \right).$
\item For the cases (ib), (iiib), we have $M_{T} \mathbf{1}_{\gamma\cap A=\emptyset}=\mathbf{1}_{\gamma \cap A=\emptyset}\, \exp \!\left( - c(\kappa) m_\Ub(K, A) \right)\, C^\nu.$
\end{itemize}
\end{lemma}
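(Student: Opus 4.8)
The plan is to let $t\uparrow T$ in the explicit product defining $M_t$ and to control each of its five factors in turn. Throughout one works on the event $\{\gamma\cap A=\emptyset\}$, i.e.\ $T<\tau$, so that $h_t=f_{A_t}$ is defined for every $t<T$ and $\gamma([0,T])$ and $A$ are disjoint compact sets, whence $\mathrm{dist}(\gamma([0,T]),A)>0$; this positivity will be used repeatedly. Four of the five factors are already handled by the two preceding lemmas: by Lemma~\ref{lem:limit}, in each of the cases (ia), (ib), (ii), (iiia), (iiib) the factors $|h_t'(0)|$, $|h_t'(e^{iW_t})|$ and $G(\nu_t)/G(\theta_t)$ tend to $1$ as $t\to T$; and the remaining factor $|h_t'(e^{iV_t})|^{\nu}$ is identically $1$ when $\nu=0$ (cases (ia), (iiia)), tends to $1$ in case (ii) (again Lemma~\ref{lem:limit}), and tends to $C$ in the cases (ib), (iiib) (Lemma~\ref{lem:limit2}).

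The only factor needing new input is the exponential one, $\exp\!\big(-\tfrac{c(\kappa)}{6}\int_0^t|Sh_s(W_s)|\,ds\big)$. Here I would invoke the classical identification of this Schwarzian time-integral with a mass of the Brownian loop measure — the very identity that makes $M_t$ a local martingale in Lemma~\ref{lem:martingale}, see \cite{MR1992830} and Appendix~\ref{A3} — namely that for every $t<T$
$$\exp\!\Big(-\tfrac{c(\kappa)}{6}\int_0^t|Sh_s(W_s)|\,ds\Big)=\exp\!\big(c(\kappa)\,m_\Ub(\gamma([0,t]),A)\big).$$
As $t\uparrow T$ the traces $\gamma([0,t])$ increase to $\gamma([0,T])$; since any loop that meets $\gamma([0,T])$ but meets no $\gamma([0,t])$ with $t<T$ must pass through the single point $\gamma(T)$, and loops through a fixed point form an $m_\Ub$-null set, monotone convergence gives $m_\Ub(\gamma([0,t]),A)\uparrow m_\Ub(\gamma([0,T]),A)$, a limit which is finite because $\gamma([0,T])$ and $A$ lie at positive distance. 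Hence this factor converges to $\exp(c(\kappa)m_\Ub(\gamma([0,T]),A))$, and consequently $M_t$ converges, as $t\to T$, to the product of the five limits, which one denotes $M_T$.

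Multiplying out finishes the identification. In the cases (ia), (ii), (iiia) the first four limits are all $1$, so $M_T\mathbf{1}_{\gamma\cap A=\emptyset}=\mathbf{1}_{\gamma\cap A=\emptyset}\exp(c(\kappa)m_\Ub(\gamma,A))$, as asserted. In the cases (ib), (iiib) one obtains instead $M_T\mathbf{1}_{\gamma\cap A=\emptyset}=\mathbf{1}_{\gamma\cap A=\emptyset}\,C^{\nu}\exp(c(\kappa)m_\Ub(\gamma,A))$; this is the asserted expression once one recalls that $C$ is, by its definition just before the lemma, the restriction derivative of $D_\gamma$ at the points $-1,1$, and that the loops meeting $K$ but not $\gamma$ lie in $D_\gamma$ and are exactly the loops meeting $\gamma'$ — so that, on the event $\{K\cap A=\emptyset\}$ on which \eqref{eq:C} is applied, the present formula combines with the chordal $\kappa$-restriction property of $\gamma'$ to produce the factor $\exp(c(\kappa)m_\Ub(K,A))\,C^{\nu}$.

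The hard part, if any, will be making the exponential step rigorous all the way up to the terminal time $T$ in the cases where $\gamma$ self-intersects (iiib) or returns to $\partial\Ub$ (ii): one must simultaneously rule out a blow-up of the Schwarzian integral near $T$ and an escape of loop mass in the limit. Both are, however, consequences of the single estimate $\mathrm{dist}(\gamma([0,T]),A)>0$ on $\{\gamma\cap A=\emptyset\}$, so I do not expect a serious obstacle; the real substance of the limiting analysis was already absorbed into Lemmas~\ref{lem:limit} and~\ref{lem:limit2}, and what remains here is the assembling of those limits together with the standard loop-measure identity for the Schwarzian term.
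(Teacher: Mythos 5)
Your proof is correct and follows essentially the same route as the paper: combine Lemmas~\ref{lem:limit} and~\ref{lem:limit2} for the derivative and $G$-ratio factors with the identification of the Schwarzian time-integral as the Brownian loop mass $m_\Ub(\gamma([0,t]),A)$, and let $t\to T$ (the paper simply asserts this last convergence, whereas you supply the monotone-convergence and null-set argument; the finiteness of the limit from $\mathrm{dist}(\gamma([0,T]),A)>0$ is exactly the right justification). Your observation that the limit actually obtained in cases (ib), (iiib) is $C^\nu\exp\!\left(c(\kappa)\,m_\Ub(\gamma,A)\right)$ rather than the stated $C^\nu\exp\!\left(c(\kappa)\,m_\Ub(K,A)\right)$ is also well taken --- $M_T$ is $\gamma$-measurable while $m_\Ub(K,A)$ is not, the paper's own proof yields only $m_\Ub(\gamma,A)$, and this is the form used in the subsequent display~\eqref{eq:gamma'}, with $m_\Ub(K,A)$ recovered only after multiplying by the chordal restriction identity~\eqref{eq:C} for $\gamma'$.
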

\begin{proof}
We restrict ourselves on the event $\gamma \cap A=\emptyset$. For all $t\in[0,T]$, we have
\begin{align}\label{eq:loop_mass_t}
m_\Ub(\gamma([0,t]), A)= \int_0^t \frac{1}{6} |Sh_s(W_s)| ds.
\end{align}
One can see \cite[equation (23)]{MR2045953} for a chordal version of this formula in the upper half plane $\Hb$. The above formula is a radial version, which can be obtained from the chordal version by a coordinate change.
Consequently, we have
\begin{align*}
\exp\left( \int_0^T -\frac{c(\kappa)}{6} |Sh_s(W_s)| ds \right) = \exp \!\left( - c(\kappa) m_\Ub(\gamma, A) \right).
\end{align*}
Combining with Lemma~\ref{lem:limit} and~\ref{lem:limit2}, we complete the proof.
\end{proof}

We are now ready to prove Proposition~\ref{prop-restriction}.
\begin{proof}[Proof of Proposition~\ref{prop-restriction}]
We denote by $\Pb$ the probability measure under which we define $\gamma$ and $K$.  For any $A\in \Qc$, we denote by $\Pb_A$ the image measure of $\Pb$ under the conformal map $f_A$.
Our first goal is to interpret the normalized local martingale $(M_t/ \Eb(M_0))$ as the Radon-Nikodym derivative of $\Pb_A$ with respect to $\Pb$.

Let us fix $A\in\Qc$. Note that $|h_t'(0)| \ge 1$ and  is decreasing in $t$, hence $|h_t'(0)|^{2\mu}$ stays bounded. Also, we have $|h_t'(e^{iW_t})|\le 1$ and $|h_t'(e^{iV_t})| \le 1$. Since $(6-\kappa)/(2\kappa)>0$ and $\nu\ge 0$, the  terms $|h_t'(e^{iW_t})|^{(6-\kappa)/(2\kappa)}$ and $|h_t'(e^{iV_t})|^\nu$ are also bounded.

\begin{itemize}
\item If $b\not=0$, then  $G$ is increasing to $\infty$ as $\theta\to \pi$ by~\eqref{asym-G-pi}. Note that we always have $\nu_t \le \theta_t$ (see Figure~\ref{commutation}). Also note that $G(\theta)>0$ for all $\theta\in(0,\pi)$ by Lemma~\ref{lem:G>0}, $G$ is asymptotic to $\theta^{2d}$ as $\theta\to 0$ due to~\eqref{simplified-G}. 
Altogether, we deduce that the term $G(\nu_t)/ G(\theta_t)$ is bounded.

\item If $b=0$, then we are in the degenerate cases (ia) and (ib). The curve $\gamma$ is a radial SLE$_\kappa(\rho)$. We have $G(\theta) =(\sin\theta)^{2d}$. Since $\nu_t\le \theta_t$, the term $G(\nu_t)/ G(\theta_t)$ can possibly explode only if $\theta_t$ tends to $\pi$.
For $\delta>0$, let 
\begin{align}\label{eq:S_delta}
S_\delta:=\inf\{t>0: \theta_t \ge \pi-\delta\}.
\end{align}
Note that $S_\delta$ tends to $T=\infty$ as $\delta\to 0$.
We remark that if $\kappa=8/3$, then $M_t$ coincides with the local martingale considered in \cite[Lemma 14]{MR3293294}. In \cite[Proof of Proposition 15]{MR3293294}, Wu argued that the local martingale $M_t$ is bounded by $1$, and then used $\Eb[M_T]=M_0$. However, this is in fact not true, and we can construct cases where $M_t>1$. It also does not seem obvious for us to show that it is bounded. We will therefore argue differently. 

\end{itemize}
We still need to consider the exponential term in $M_t$. This term can possibly explode if $\gamma$ hits $A$. However, if we impose that the distance between $\gamma$ and $A$ is at least some $\eps>0$, then this exponential term stays bounded, due to~\eqref{eq:loop_mass_t} and the fact that the mass of loops $m_\Ub(\gamma([0,t]), A)$ is bounded (because the Brownian loop measure has finite mass on loops in $\Ub$ with diameter at least $\eps$ \cite{MR2045953}).

For any $\eps>0$,  let $\tau_\eps$ be the first time that $\gamma$ reaches the $\eps$-neighborhood of $A$. 
If $b\not=0$, then let $S_\delta=T$ for all $\delta>0$. If $b=0$, then let $S_\delta$ be given by~\eqref{eq:S_delta}.
It follows from the arguments above that $(M_t, 0\le t\le \tau_\eps\wedge S_\delta)$ is bounded.
Furthermore, if we view $M$ as a function of the curve $\gamma$, then for any $m>0$, there exists $\eps, \delta>0$ such that for any curve $\gamma$ which is continuous and such that $d(\gamma,A)\ge \eps$, we have $M_{t\wedge S_\delta}(\gamma)\le m$ for all $t\ge 0$. Moreover, we can choose $\eps, \delta$ in a way that $\eps, \delta$ go to $0$ as $m$ goes to $\infty$.

For any $t>0$, let $\Pb_t$ be $\Pb$ restricted to $\Fc_t$. We also let $\Pb_{\tau_\eps \wedge S_\delta}$ be $\Pb$ restricted to $\Fc_{\tau_\eps \wedge S_\delta}$.
Note that $M_{\tau_\eps\wedge S_\delta \wedge t}/\Eb(M_0)$ is a bounded martingale with expectation $1$. By Girsanov's theorem and~\eqref{eq:dM/M}, weighting $\Pb_{\tau_\eps \wedge S_\delta}$ by $M_{\tau_\eps \wedge S_\delta}/\Eb(M_0)$ gives rise to a new probability measure $\wt\Pb_{\tau_\eps \wedge S_\delta}$  on $\Fc_{\tau_\eps \wedge S_\delta}$ such that  for $0\le t\le \tau_\eps \wedge S_\delta$, the driving function $W_t$ of $\gamma$ satisfies
\begin{align}\label{driving_function_girsanov}
d W_t =\sqrt{\kappa} d\wt B_t +\!\left(\frac{6-\kappa}{2}\frac{\phi_t''(W_t)}{\phi_t'(W_t)} +\frac{\kappa}{2} \phi_t'(W_t) \frac{G'(\nu_t)}{G(\nu_t)} \right) dt,
\end{align}
where $\phi_t(z)=-i\ln h_t(e^{iz})$ (see Section~\ref{A:ito}) and $\wt B_t$ is a Brownian motion under $\wt\Pb_{\tau_\eps \wedge S_\delta}$ (recall that under $\Pb_{\tau_\eps \wedge S_\delta}$,  the driving function $W_t$ of $\gamma$ is given by~\eqref{driving-function}). 
As $\eps$ tends to $0$, $\tau_\eps \wedge S_\delta$ increases to $\tau \wedge S_\delta$ and the measures $\wt\Pb_{\tau_\eps \wedge S_\delta}$ are consistent, hence we can obtain a probability measure $\wt \Pb_{\tau \wedge S_\delta}$ on $\Fc_{\tau \wedge S_\delta}$ which coincides with $\wt \Pb_{\tau_\eps \wedge S_\delta}$ on $\Fc_{\tau_\eps \wedge S_\delta}$ for all $\eps>0$. 
Then, letting $\delta$ tend to $0$, we can further obtain a probability measure $\wt \Pb_{\tau \wedge T}$ on $\Fc_{\tau \wedge T}$ which coincides with $\wt \Pb_{\tau_\eps \wedge S_\delta}$ on $\Fc_{\tau_\eps \wedge S_\delta}$ for all $\eps, \delta>0$.
Under $\wt\Pb_{\tau \wedge T}$, $(\gamma_t, 0\le t < \tau\wedge T)$ is an SLE with driving function $ W_t$ given by~\eqref{driving_function_girsanov}.

On the other hand, by Lemma~\ref{lem:change_measure_conformal}, a radial SLE in $\Ub$ driven by~\eqref{driving_function_girsanov} has the same law as the image under $f_A^{-1}$ of an hSLE$_\kappa(\mu,\nu)$ in $\Ub$. In particular, it a.s.\ does not intersect $A$.
Therefore,  $\wt\Pb_{\tau\wedge T}$ is in fact equal to $\Pb_{A}$. Moreover, under $\wt\Pb_{\tau \wedge T}$, we have $\tau=\infty$ a.s.
Hence we have proved
\begin{align}\label{eq:RN}
d\Pb_A(\gamma)\mathbf{1}_{\gamma \cap A=\emptyset}=\frac{M_T}{\Eb(M_0)}d\Pb(\gamma)\mathbf{1}_{\gamma\cap A=\emptyset}.
\end{align}
\medbreak

If we are in situation (ia), (ii) or (iiia), then Lemma~\ref{lem:limitMT}, equation \eqref{eq:RN} and the fact that 
\begin{align}\label{eq:M0}
\Eb(M_0)=|f_A'(0)|^\alpha f_A'(1)^\beta
\end{align}
imply that $\gamma$ (hence also $K$) satisfies the formula~\eqref{eq:gen} and we have completed the proof. 

If we are in situation (ib) or (iiib), then again by Lemma~\ref{lem:limitMT} and \eqref{eq:RN}, \eqref{eq:M0}, we have
\begin{align}\label{eq:gamma'}
|f_A'(0)|^\alpha f_A'(1)^\beta d\Pb_A(\gamma)\mathbf{1}_{\gamma \cap A=\emptyset}= \exp\!\left(-c(\kappa) m_{\Ub}(\gamma,A) \right)  d\Pb(\gamma)\mathbf{1}_{\gamma\cap A=\emptyset} C^\nu.
\end{align}
Multiplying~\eqref{eq:gamma'} by $\mathbf{1}_{\gamma'\cap A=\emptyset}$ and applying~\eqref{eq:C}, we get
\begin{align*}
&|f_A'(0)|^\alpha f_A'(1)^\beta d\Pb_A(\gamma) d\Pb_A^\gamma(\gamma') \mathbf{1}_{(\gamma\cup\gamma') \cap A=\emptyset}\\
=&\exp\!\left(- c(\kappa) \!\left(m_{\Ub}(\gamma,A) + m_{D_\gamma}(\gamma', A) \right) \right)  d\Pb(\gamma) d\Pb^\gamma(\gamma')  \mathbf{1}_{(\gamma\cup\gamma') \cap A=\emptyset}.
\end{align*}
The equation above coincides with~\eqref{eq:gen}, because the event $(\gamma\cup\gamma') \cap A=\emptyset$ is the same as $K \cap A=\emptyset$, and  that
\[ d\Pb_A(\gamma) d\Pb_A^\gamma(\gamma') = d\Pb_A(K), \quad  d\Pb(\gamma) d\Pb^\gamma(\gamma') = d\Pb(K).
\]
Also note that $m_{\Ub}(\gamma,A)+ m_{D_\gamma}(\gamma', A) =m_\Ub(K, A)$. This completes the proof.
\end{proof}

\section{Proof of Theorem~\ref{main-theorem}}\label{sec-disconnection}
In the present section, we aim to prove Theorem~\ref{main-theorem}. We will first recall in Section~\ref{S:E} some results on eigenvalue expansions for diffusion hitting times. Then in Section~\ref{S:P}, we will prove Theorem~\ref{main-theorem} by analysing the diffusion process $\theta_t$ related to the hSLE defined in Section~\ref{ShSLE}.

\subsection{Eigenvalue expansions for diffusion hitting times}\label{S:E}
In this section, we will recall some results on eigenvalue expansions for diffusion hitting times, based on classical diffusion theory and a result of Kent~\cite{MR576891}. 

Consider a diffusion process $X$ defined on an interval $[r_0, r_1]$ where $-\infty\le r_0 < r_1 \le \infty$. Suppose that $X$ is associated with an infinitesimal generator $\Lc$ which is a second order linear operator. By classical diffusion theory (see, e.g., \cite{MR0345224,MR0247667}), the diffusion process $X$ can be characterised  by the \emph{speed measure} $m(x)$, \emph{natural scale} $s(x)$ and \emph{killing measure} $k(x)$.
We use Mandl's terminology for the boundaries: \emph{regular, entrance, exit} and \emph{natural} (see \cite{MR0247667}).  For the purpose of the present paper, we restrict ourselves to the case where the boundary condition at $r_0$ is \emph{entrance}.
For a continuous function $u$ defined on $(r_0, r_1)$, we  define its right-hand derivative with respect to the natural scale to be
\begin{align*}
u^+(x)=\lim_{\eps\to 0} (u(x+\eps) -u(x)) / (s(x+\eps) -s(x)),
\end{align*} 
when the limit above does exist.

For $r_0\le a < b \le r_1$, let $\tau_{a,b}$ be the first time that $X$ hits $b$, starting at $a$. Let $\phi_{a,b}$  be the moment generating function of $\tau_{a,b}$, given by $\phi_{a,b}(\lambda) = \Eb (\exp(\lambda \tau_{a,b}))$.
For fixed $\lambda$, let $v_\lambda$ be a solution to
\begin{align*}
\Lc v+ \lambda v=0  \quad \text{with initial condition}\quad v^+(r_0)=0.
\end{align*}
The solution above is unique up to multiplicative constant. We can fix it by imposing $v_\lambda(r_0)=1$. Then, by  \cite{MR0345224}, we have
\begin{align*}
\phi_{a,b}(\lambda)= v_\lambda(a)/v_\lambda(b).
\end{align*}
Based on the above observations, Kent further deduced the following theorem on the eigenvalue expansions for $\tau_{a,b}$, that we state in a form which suits our purpose.

\begin{theorem}[\cite{MR576891}]\label{thm:kent}
For $r_0\le a < b \le r_1$, the zeros of $v_\lambda(b)$, viewed as a function of $\lambda$, are simple and positive and form a sequence
$0<\lambda_1<\lambda_2<\cdots.$

For all $t>0$, we have
\vspace{-2mm}
\begin{align*}
\Pb(\tau_{a,b}>t) = \sum_{n=1}^\infty a_n \exp(-\lambda_{n} t),
\end{align*}
\vspace{-2mm}
where
\vspace{-2mm}
\begin{align*}
a_n=\prod_{k=0,k\not=n}^\infty (1-\lambda_n/\lambda_k)^{-1}.
\end{align*}
\end{theorem}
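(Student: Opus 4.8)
The plan is to follow Kent's argument \cite{MR576891}: upgrade the identity $\phi_{a,b}(\lambda)=v_\lambda(a)/v_\lambda(b)$ recalled above into a Mittag--Leffler expansion of the moment generating function, and then invert it term by term. Since the case relevant to us has the starting point $a$ equal to the entrance boundary $r_0$, the normalization $v_\lambda(r_0)=1$ reduces the identity to $\phi_{r_0,b}(\lambda)=1/v_\lambda(b)$, and I would carry out the proof in this case (for general $a$ one simply carries along the extra entire factor $v_\lambda(a)$).

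First I would pin down the analytic structure of $\lambda\mapsto v_\lambda(b)$. Since $v_\lambda$ solves the linear second-order equation $\Lc v+\lambda v=0$ with boundary data $v^+(r_0)=0$ and $v_\lambda(r_0)=1$, a Picard--Volterra iteration shows that for each fixed $x$ the map $\lambda\mapsto v_\lambda(x)$ is entire and, by the classical Sturm--Liouville asymptotics, of order at most $1/2$; in particular $v_0\equiv 1$, so $v_0(b)=1\neq 0$. The zeros of $\lambda\mapsto v_\lambda(b)$ are exactly the eigenvalues of the realization of $-\Lc$ on $(r_0,b)$ that is self-adjoint for the speed measure, with the entrance condition at $r_0$ and a Dirichlet condition at $b$; Sturm--Liouville oscillation theory then gives that they are real, simple, and form an increasing sequence diverging to $+\infty$, and that they are positive, because $-\Lc$ is nonnegative on this space and $0$ is not an eigenvalue (the only $\Lc$-harmonic function with $v^+(r_0)=0$ is constant, and a constant does not vanish at $b$). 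This proves the first assertion of the theorem, and since the order is $<1$ the Hadamard factorization reads $v_\lambda(b)=\prod_{n\ge 1}(1-\lambda/\lambda_n)$ with no exponential factor.

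Next I would expand $1/v_\lambda(b)$, a meromorphic function with simple poles at the $\lambda_n$: integrating $1/\big((z-\lambda)v_z(b)\big)$ over a sequence of circles on which $|v_z(b)|$ is bounded below gives the partial-fraction expansion $1/v_\lambda(b)=\sum_{n\ge 1}a_n/(1-\lambda/\lambda_n)$, where, reading off the residue from the factorization, $a_n=\lim_{\lambda\to\lambda_n}(1-\lambda/\lambda_n)/v_\lambda(b)=\prod_{k\neq n}(1-\lambda_n/\lambda_k)^{-1}$. Finally I would invert the transform: for $s>-\lambda_1$ one has $\Eb(e^{-s\tau_{r_0,b}})=\phi_{r_0,b}(-s)=\sum_{n\ge 1}a_n\lambda_n/(\lambda_n+s)$, and since $\lambda_n/(\lambda_n+s)=\int_0^\infty\lambda_n e^{-\lambda_n t}\,e^{-st}\,dt$, interchanging sum and integral identifies the density of $\tau_{r_0,b}$ as $\sum_n a_n\lambda_n e^{-\lambda_n t}$; integrating from $t$ to $\infty$ yields $\Pb(\tau_{r_0,b}>t)=\sum_n a_n e^{-\lambda_n t}$, and putting $t=0$ recovers the consistency relation $\sum_n a_n=\phi_{r_0,b}(0)=1$.

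The hard part will be the complex-analytic bookkeeping of the third paragraph: obtaining uniform lower bounds for $|v_z(b)|$ along a suitable exhausting family of contours so that the Mittag--Leffler expansion and the term-by-term Laplace inversion are legitimate, together with the boundary-classification input (entrance at $r_0$) needed to guarantee that the zeros of $v_\lambda(b)$ coincide exactly with the Dirichlet eigenvalues and are all simple and positive. All of this is precisely what Kent establishes in \cite{MR576891} (with \cite{MR0345224} supplying the underlying diffusion theory), so in practice the proof amounts to invoking his theorem in the normalization convenient for Section~\ref{S:P}.
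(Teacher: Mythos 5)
The paper does not prove this statement at all: Theorem~\ref{thm:kent} is imported as a black box from Kent \cite{MR576891} (with \cite{MR0345224} supplying the identity $\phi_{a,b}(\lambda)=v_\lambda(a)/v_\lambda(b)$), so there is no internal proof to compare against. Your reconstruction follows exactly the route Kent takes — entirety and order $\le 1/2$ of $\lambda\mapsto v_\lambda(b)$ via Picard iteration, identification of its zeros with the Dirichlet/entrance eigenvalues by Sturm--Liouville theory, Hadamard factorization without exponential factor, Mittag--Leffler expansion of $1/v_\lambda(b)$, and term-by-term Laplace inversion — and the outline is correct; the genuinely delicate steps (lower bounds on $|v_z(b)|$ along an exhausting family of contours, and the justification of interchanging sum and integral) are exactly the ones you flag and defer to \cite{MR576891}, which is consistent with the paper's own treatment.

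One point worth making explicit: the formula $a_n=\prod_{k\ne n}(1-\lambda_n/\lambda_k)^{-1}$ is only literally correct when $a=r_0$, since for general $a\in(r_0,b)$ the residue of $\phi_{a,b}$ at $\lambda_n$ carries the extra factor $v_{\lambda_n}(a)$ (and one must also rule out cancellation between zeros of $v_\lambda(a)$ and $v_\lambda(b)$). You handle this correctly by restricting to $a=r_0$, which is the only case the paper actually uses (in Section~\ref{S:P} the diffusion $\theta_t$ starts at the entrance boundary $0$); the theorem as stated in the paper for general $a$ is slightly imprecise on this point, as is its indexing (the product over $k$ starts at $0$ while the eigenvalue sequence starts at $1$). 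Neither affects the application.
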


\subsection{Proof of Theorem~\ref{main-theorem}}\label{S:P}
We are now ready to prove Theorem~\ref{main-theorem}.

For $\kappa\in(0,4]$ and $\alpha,\beta$ in the range (\ref{range1}), let $K$ be a radial $\kappa$-restriction measure with parameters $(\alpha,\beta)$. Let $K_0$ be the connected component containing the origin of the interior of $K$.
Let $L$ be the conformal radius of $K_0$ seen from the origin.  
In Section~\ref{SS restriction}, we have constructed $K$ using an hSLE curve $\gamma$, which is in turn parametrized in terms of the conformal radius of its complement. Therefore, we in fact have $L=e^{-T}$ where $T$ is the stopping time defined in~\eqref{stoping_time} ($T$ is also the first time that $\gamma$ disconnects $0$ from $\infty$).

Therefore, proving Theorem~\ref{main-theorem} boils down to estimating the tail probability of $T$. Note that $T$ is the first hitting time at $\pi$ by the diffusion process $\theta_t$ started at $0$ and governed by the following equation
\begin{align}\label{eq-theta}
 d \theta_t = \frac12 \sqrt{\kappa} dB_t +\frac{\kappa}{4 }\frac{G'(\theta_t)}{G(\theta_t)} dt +\frac12 \cot(\theta_t) dt.
\end{align}
We have already argued in Section \ref{S:geom} that $\theta_t$ a.s.\ never hits $0$ (except at $t=0$) and will a.s.\ hit $\pi$. 
We will apply Theroem~\ref{thm:kent} and explicitly compute the relevant eigenvalues.
More precisely, we will prove the following lemma, which then implies Theorem~\ref{main-theorem}.
\begin{lemma}
For any $t>0$, we have
\begin{align}\label{T>t}
\Pb(T>t)=\sum_{n=0}^\infty a_n \exp(-\lambda_n t).
\end{align}
where $(\lambda_n)_{n\in\Nb}$ is a positive increasing sequence given by
\begin{align}\label{eq:lambda_n}
\lambda_n=\left(n^2+n-\frac12 \right)\frac{\kappa}{8}-\frac{n-1}{2}-\frac{1}{\kappa}+\frac{\beta}{2}+\left(\frac18\left(n+\frac12 \right)-\frac{1}{4\kappa} \right)\sqrt{16\kappa\beta+(4-\kappa)^2}-\alpha,
\end{align}
and
\vspace{-4mm}
\begin{align*}
a_n=\prod_{k=0,k\not=n}^\infty (1-\lambda_n/\lambda_k)^{-1}.
\end{align*}
\end{lemma}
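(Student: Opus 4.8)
The plan is to apply Theorem~\ref{thm:kent} to the diffusion process $\theta_t$ governed by~\eqref{eq-theta}, taking $a=r_0=0$ as an entrance boundary and $b=r_1=\pi$, so that $T=\tau_{0,\pi}$. First I would record the infinitesimal generator of~\eqref{eq-theta}, namely
\[
\Lc = \frac{\kappa}{8}\,\frac{d^2}{d\theta^2} + \left(\frac{\kappa}{4}\frac{G'(\theta)}{G(\theta)}+\frac12\cot\theta\right)\frac{d}{d\theta},
\]
and verify that $0$ is indeed an entrance boundary (not exit): this follows from the asymptotics~\eqref{e1}, which give drift $\sim(\tfrac{\kappa}{2}d+\tfrac12)/\theta$ near $0$ and hence a Bessel process of dimension $2+4q_2(\kappa,\nu)>2$, exactly as argued in Section~\ref{S:geom}. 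Similarly~\eqref{e2} shows $\pi$ is reached a.s. According to Kent's setup I then need to solve $\Lc v_\lambda + \lambda v_\lambda = 0$ with the entrance condition $v_\lambda^+(0)=0$, and the tail $\Pb(T>t)=\sum_n a_n e^{-\lambda_n t}$ is governed by the zeros $\lambda_n$ of $\lambda\mapsto v_\lambda(\pi)$.

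The key computational step is to solve the ODE $\Lc v+\lambda v=0$ explicitly. I would look for solutions of the form $v(\theta)=G(\theta)^{-1} w(\theta)$ for a suitable correction, or more directly guess that eigenfunctions have the shape $v(\theta) = G(\theta)^{-1}(\sin\theta)^{p}\,{}_2F_1(\cdot,\cdot;\cdot;(\sin\theta)^2)$ for appropriate exponents/parameters depending on $\lambda$; the substitution $z=(\sin\theta)^2$ should again turn the equation into a hypergeometric ODE, parallel to the computation in Lemma~\ref{lem3.2} (equations~\eqref{diff-G}--\eqref{diff-H}). The divisor $G$ is natural here because Lemma~\ref{lem:martingale} already singled it out as the ``partition function''; dividing by it should remove the first-order singular terms of $\Lc$ and leave a clean hypergeometric problem. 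The eigenvalue condition $v_\lambda(\pi)=0$ then becomes a condition that a hypergeometric function (evaluated via the connection formula~\eqref{hyper-identity}, since $\theta\to\pi$ corresponds to the analytic continuation past $z=1$) vanishes; this happens precisely when one of its parameters is a non-positive integer $-n$, which turns the ${}_2F_1$ into a polynomial and forces $\lambda$ to take the discrete values~\eqref{eq:lambda_n}. Carrying out this matching — identifying which parameter must equal $-n$ and solving the resulting quadratic in $\lambda$ (using the substitutions $q_1,q_2$ of~\eqref{eq:parameters} and the relations~\eqref{alpha-beta} between $(\mu,\nu)$ and $(\alpha,\beta)$) — is where the formula with the $\sqrt{16\kappa\beta+(4-\kappa)^2}$ term and the quadratic-in-$n$ leading behavior emerges.

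The main obstacle I anticipate is twofold. First, the algebra of translating between the hSLE parameters $(\mu,\nu)$, the hypergeometric parameters $(a,b,c,d)$, and the target restriction exponents $(\alpha,\beta)$, while simultaneously tracking the eigenvalue parameter $\lambda$ through the connection formula, is delicate; one must be careful that the correct branch/root of the quadratic is chosen so that $(\lambda_n)$ is the increasing positive sequence claimed (and that $\lambda_0=\eta_\kappa(\alpha,\beta)=\eta^0_\kappa$, matching~\eqref{thm:eta_kappa}). Second, one must justify the applicability of Kent's theorem at the technical level: confirming the entrance classification at $0$ rigorously from~\eqref{e1}, confirming the simplicity and positivity of the zeros (which Theorem~\ref{thm:kent} provides once we know we are in its setting), and verifying that the $(M_t)$-based parametrization genuinely gives $L=e^{-T}$ so that $\Pb(L<1/R)=\Pb(T>\log R)$, whence~\eqref{series} follows by setting $t=\log R$ and $\eta^n_\kappa = \lambda_n$, $a^n_\kappa = a_n$. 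The rest — matching~\eqref{eq:lambda_n} to~\eqref{intro:kappa_ab} and extracting~\eqref{thm:eta_kappa} as the $n=0$, $\alpha=0$ case — is then bookkeeping.
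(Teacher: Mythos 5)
Your proposal follows essentially the same route as the paper: reduce to the hitting time of $\pi$ for the diffusion $\theta_t$, verify the entrance/exit classification via the Bessel asymptotics~\eqref{e1} and~\eqref{e2}, solve $\Lc v+\lambda v=0$ by writing $v_\lambda=g_\lambda/G$ so that the substitution $z=(\sin\theta)^2$ yields a hypergeometric equation with $e$ shifted to $e+\lambda$, and locate the zeros of $\lambda\mapsto v_\lambda(\pi)$ through the $\theta\to\pi$ connection-formula coefficient, which vanishes exactly when $b_\lambda$ or $c-a_\lambda$ is a non-positive integer; this gives $q_1(\kappa,\mu+\lambda_n/2)=\tfrac{2n+1}{4}+q_2(\kappa,\nu)$ and hence~\eqref{eq:lambda_n} after the translation~\eqref{alpha-beta}. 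This is the paper's argument in all essentials, so no further comparison is needed.
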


\begin{proof}
The infinitesimal generator $\Lc$ of the process $\theta_t$ is given by
\begin{align*}
\Lc f(\theta)=\frac{\kappa}{8}f''(\theta)+\frac{\kappa}{4}\frac{G'(\theta)}{G(\theta)} f'(\theta) +\frac12 \cot(\theta)f'(\theta).
\end{align*}
The natural scale $s(\theta)$ associated to $\theta$ satisfies
\begin{align*}
\Lc s=0.
\end{align*}
Equivalently, the function $h:=s'$ satisfies the following first order linear differential equation
\begin{align*}
h'(\theta)/h(\theta)=-2\frac{G'(\theta)}{G(\theta)}-\frac{4}{\kappa} \cot(\theta).
\end{align*}
Integrating both sides, we deduce that for some $c>0$, we have
\begin{align*}
s'(\theta)=h(\theta)= c  \sin(\theta)^{-4/\kappa} G(\theta)^{-2}.
\end{align*}
Since $G(\theta) \sim \theta^{2d}$ as $\theta\to 0$, we deduce that
\begin{align*}
s(\theta)-s(0)= c\, \theta^{1-4d-4/\kappa} (1+o(1)) \text{ as } \theta\to 0.
\end{align*}
For $\lambda>0$, let $f_\lambda$ be the unique solution to the following differential equation
\begin{align}\label{h_lambda}
\Lc f +\lambda f=0
\end{align}
with initial conditions
\begin{align}\label{eq:initial1}
&\lim_{\theta\to 0} (f(\theta)-f(0))/(s(\theta)-s(0))=\lim_{\theta\to 0} \theta^{-1+4d+4/\kappa}(f(\theta)-f(0))=0\\
&f_\lambda(0)=1. \label{eq:initial2}
\end{align}
It then remains to solve (\ref{h_lambda}) and to identify the sequence of successive zeros (in increasing order) of $f_\lambda(\pi)$ viewed as a function of $\lambda$.

To solve (\ref{h_lambda}), we try to find a solution in the form 
\begin{align}\label{fgl}
f_\lambda(\theta)=g_\lambda(\theta)/G(\theta),
\end{align}
where $G$ is given by Definition~\ref{def:G}. This implies that $g_\lambda$ should be a solution of
\begin{align}\label{diff-G-lambda}
e+\lambda-\frac{\nu}{2\sin(\theta)^2} +\frac{g'(\theta)}{g(\theta)} \frac{\cot(\theta)}{2}+\frac{\kappa}{8} \frac{g''(\theta)}{g(\theta)} =0.
\end{align}
Note that (\ref{diff-G-lambda}) has the same form as (\ref{diff-G}), except that we replace $e$ by $\wt e=e+\lambda$. 
This equation can be transformed into a hypergeometric differential equation after a change of variable (as in the proof of Lemma~\ref{lem3.2}) and has two linearly independent solutions. The initial condition~\eqref{eq:initial2} implies that $g_\lambda$ should have the same form as (\ref{def1}).
More concretely, $g_\lambda$ is given by~\eqref{def1} where we replace the parameters $a,b$ by the following $a_\lambda,b_\lambda$ 
\begin{align*}
 a_\lambda=\frac14+q_1(\kappa, \mu+\lambda/2)+q_2(\kappa,\nu),\quad
 b_\lambda=\frac14-q_1(\kappa, \mu+\lambda/2)+q_2(\kappa,\nu)
\end{align*}
and leave  $c,d$ invariant. 
More precisely, we have
\begin{align}\label{f-lambda1}
f_\lambda(\theta)= {_2}F_1 (a_\lambda, b_\lambda; c; \sin(\theta)^2)/ {_2}F_1 (a,b;c; \sin(\theta)^2), \quad \text{for } \theta\in(0,\pi/2),
\end{align}
and that $f_\lambda$ is the analytical continuation of~\eqref{f-lambda1} for $\theta\in [\pi/2,\pi)$.
We can then check that $|f_\lambda'(0)|<\infty$, hence the condition~\eqref{eq:initial1} is also satisfied.
Therefore, $f_\lambda$ given by \eqref{f-lambda1} is indeed the unique solution to~\eqref{h_lambda} which satisfies the initial conditions~(\ref{eq:initial1}, \ref{eq:initial2}).

In order to apply Theorem~\ref{thm:kent}, we need to compute the value of $f_\lambda(\theta)$ as $\theta\to\pi$.
Recall that by~\eqref{asym-G-pi}, we have that as $\theta\to \pi$,
\begin{align*}
G(\theta) \sim -2 C_2(a,b,c) (\pi-\theta)^{2d+2-2c},
\end{align*}
where $C_2$ is given by~\eqref{c2}, i.e.,
\begin{align*}
C_2(a,b,c)=\frac{\Gamma(c)\Gamma(-1/2)}{\Gamma(a)\Gamma(b)}\cdot\frac{\Gamma(3/2)\Gamma(c-1)}{\Gamma(c-a)\Gamma(1/2+a)}.
\end{align*}
For the same reasons, we have that  as $\theta\to \pi$,
\begin{align*}
g_\lambda(\theta) \sim -2 C_2(a_\lambda,b_\lambda,c) (\pi-\theta)^{2d+2-2c}.
\end{align*}
Therefore
\begin{align*}
f_\lambda(\pi)=C_2(a_\lambda, b_\lambda, c)/C_2(a,b,c),
\end{align*}
Note that for $\alpha,\beta$ in the range~\eqref{range1}, since $b\not=0$, we have that $C_2(a,b,c)\not=0$.
The function $C_2(a_\lambda, b_\lambda, c)$ equals zero if and only if its denominator
\begin{align*}
\Gamma(a_\lambda) \Gamma(b_\lambda) \Gamma(c-a_\lambda) \Gamma(a_\lambda+1/2)
\end{align*}
is $\infty$. The Gamma function equals $\infty$ if and only if its argument is in $\Zb_-$. For $\mu,\nu$ in the range~\eqref{eq:range} (but we rule out $b=0$) and $\lambda\ge 0$, only $b_\lambda$ and $c- a_\lambda$ can possibly belong to $\Zb_-$. The sequence of $(\lambda_n)_{n\in\Nb}$ that makes $b_\lambda \in \Zb_-$ or $c- a_\lambda\in \Zb_-$ is given by
\begin{align*}
q_1(\kappa, \mu+\lambda_n/2)=\frac{2n+1}{4} +q_2(\kappa,\nu).
\end{align*}
Solving this equation yields to
\begin{align*}
\lambda_n=&\left(n+\frac12 \right)^2\frac{\kappa}{8}+\frac{\nu}{2}-\frac{3}{32\kappa}(4-\kappa)^2-2\mu
+\left(n+\frac12 \right)\frac{1}{8}\sqrt{16\kappa\nu+(4-\kappa)^2},
\end{align*}
which is equal to~\eqref{eq:lambda_n}, by~\eqref{alpha-beta}. This completes the proof.
\end{proof}

\section{Some further remarks}\label{sec:further}
Up until now, we have completed the proofs of all the main results of this article. In this section, we will make some additional remarks.

\subsection{Remarks on generalized intersection exponents}\label{intro:intersection}
Recall that the half-plane and whole-plane Brownian intersection exponents were established in \cite{MR1879850, MR1879851, MR1899232}, and are closely related to the Brownian disconnection exponent. 
We believe that one can also generalize the intersection exponents to all $\kappa\in(0,4]$, and relate these exponents to (other forms of) general restriction measuers.

In Section~\ref{sec:further_restriction}, we will define and study the chordal and trichordal general $\kappa$-restriction measures for $\kappa\in(0,4]$. In the study of general trichordal restriction measures, the following exponents will show up. (More precisely, these exponents come from the equations of the chordal hypergeometric SLEs used to construct the trichordal $\kappa$-restriction measures.)
For $\kappa\in(0,4]$ and $x>0$, let
\begin{align}\label{V-kappa}
V_{\kappa}(x)=\sqrt{16\kappa x+(4-\kappa)^2}-(4-\kappa), \quad V_\kappa^{-1}(x)=\frac{x^2+2(4-\kappa)x}{16\kappa}.
\end{align}
For $n\in\Nb$ and $\alpha_1, \cdots, \alpha_n >0$, let
\begin{align}\label{tilde-xi-c}
&\wt\xi_{\kappa}(\alpha_1,\cdots, \alpha_n)=V_\kappa^{-1}\left(V_\kappa(\alpha_1)+\cdots +V_\kappa(\alpha_n) \right)\\
\label{xi-c}
&\xi_{\kappa}(\alpha_1,\cdots,\alpha_n)=\frac{1}{32\kappa}\left(V_\kappa(\alpha_1)+\cdots +V_\kappa(\alpha_n) \right)^2
-\frac{1}{8\kappa}(4-\kappa)^2.
\end{align}
One can see that $\wt\xi_\kappa$ and $\xi_\kappa$ coincide with the Brownian intersection exponents when $\kappa=8/3$, and they satisfy the same cascade relations as the ones satisfied by the Brownian intersection exponents (see e.g.\ \cite{MR1899232}):
\begin{align*}
\wt\xi_{\kappa}(\alpha_1,\cdots, \alpha_n, \wt\xi_{\kappa}(\beta_1,\cdots, \beta_k))=\wt\xi_{\kappa}(\alpha_1,\cdots, \alpha_n, \beta_1,\cdots, \beta_k),\\
\xi_{\kappa}(\alpha_1,\cdots, \alpha_n, \wt\xi_{\kappa}(\beta_1,\cdots, \beta_k))=\xi_{\kappa}(\alpha_1,\cdots, \alpha_n, \beta_1,\cdots, \beta_k).
\end{align*}
These evidences suggest, at a heuristic level, that the exponents~\eqref{tilde-xi-c} and~\eqref{xi-c} should respectively be the half-plane and whole-plane generalized intersection exponents. We mention that the exponents~\eqref{tilde-xi-c} and~\eqref{xi-c} have also appeared in \cite{MR2112128, MR2581884} in the context of quantum gravity and the KPZ relation.
However, it remains an interesting question to establish the physical meaning of these exponents in terms of non-intersection probability between random sets, so as to show that they are indeed the ``correct'' generalization of Brownian intersection exponents.

\subsection{General chordal and trichordal restriction measures}\label{sec:further_restriction}
The goal of this section is to write down a more complete theory on general restriction measures. More concretely, we will describe the chordal and trichordal counterparts of the general radial restriction measures that we considered in this work.  As we mentioned earlier, the exponents~\eqref{tilde-xi-c} and~\eqref{xi-c} will show up in the study of the general trichordal restriction measures.

The construction of  the general chordal and trichordal restriction measures (for $\kappa\in(0,4]$) is in fact much more straightforward than in the radial case. It is known that the chordal restriction measures can be constructed by chordal SLE$_{8/3}(\rho)$ processes (see \cite{MR1992830, MR2060031}) and the trichordal restriction measures can be constructed by chordal hSLE$_{8/3}(\mu, \nu, \lambda)$ processes (see \cite{MR3827221}). Therefore, we can simply change the value of $\kappa$ in the SLEs to construct the corresponding general restriction measures.
Recall that radial restriction measures were constructed in \cite{MR3293294}  with a method (see discussion below Theorem~\ref{intro-thm:restriction}) which cannot be generalized to other $\kappa$.

\subsubsection{General chordal restriction measures}
Let us first give the definition of general chordal restriction measures. 
Let $\Omega_1$ be the collection of all  simply connected compact sets $K\subset \overline\Ub$ such that $K\cap \partial \Ub=\{-1, 1\}$.
Let $\Qc_1$ be the collection of all compact sets $A\subset\overline\Ub$ such that $\Ub\setminus A$ is simply connected and $-1,1\not\in A$. 
For all $A\in\Qc_1$, let $\wt f_A$ be a conformal map from $\Ub\setminus A$ onto $\Ub$ that leaves $-1,1$ fixed.

\begin{definition}[General chordal restriction measure]\label{def:gen_chordal}
For $\kappa\in(0,4]$ related to $c$ by~\eqref{kappa-c}, a probability measure $\Pb$ on $\Omega_1$ (or a random set $K$ of law $\Pb$) is said to satisfy chordal $\kappa$-restriction with exponents $\alpha \in\Rb$, if for all $A\in\Qc_1$, we have
\begin{align}\label{eq:gen_chordal}
\frac{d\Pb(K)}{d\Pb_A(K)}\mathbf{1}_{K\cap A=\emptyset}=\mathbf{1}_{K\cap A=\emptyset} \left(\wt f_A'(1) \wt f_A(-1)\right)^\alpha \exp\left(c(\kappa) \, m_{\Ub}(K,A)\right),
\end{align}
where $\Pb_A=\Pb\circ \wt f_A$ and $m_{\Ub}(K,A)$ is the mass under the Brownian loop measure of all loops in $\Ub$ that intersect both $K$ and $A$.
\end{definition}
For each $\alpha\in\Rb$, the uniqueness of measures that satisfy Definition~\ref{def:gen_chordal} remains to be proved (except for $\kappa=8/3$). In the following Proposition~\ref{intro-thm:restriction_chordal}, we will prove the existence of such measures for a certain range of $\alpha$.

We remark that there is also a one-sided version of chordal $\kappa$-restriction measures if we restrict to $A\in\Qc_1$ such that $A\cap \partial \Ub$ is a subset of the lower half circle. It is known that chordal SLE$_\kappa(\rho)$ curves satisfy one-sided chordal $\kappa$-restriction with parameter (see  \cite{MR1992830})
\begin{align}\label{eq:alpha-chordal}
\alpha(\kappa, \rho)=(\rho+2)(\rho+6-\kappa)/(4\kappa).
\end{align}
The range $\rho\in(-2,\infty)$ gives rise to the range $\alpha\in(0,\infty)$.
Let us now record a result for the (two-sided) chordal $\kappa$-restriction measures.

\begin{figure}[h!]
\centering
\includegraphics[width=.73\textwidth]{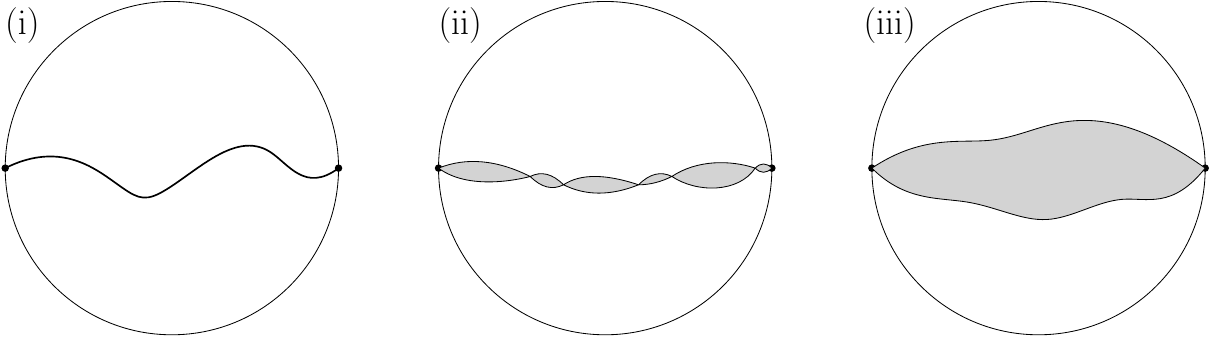}
\caption{Geometric properties of general chordal restriction measures}
\label{fig:chordal}
\end{figure}

\begin{proposition}\label{intro-thm:restriction_chordal}
For any $\kappa\in(0,4]$, and  $\alpha$ in the following range
\begin{align}\label{eq:thm_range_chordal}
\alpha \ge (6-\kappa)/(2\kappa),
\end{align}
there exists a measure $\Pf_\kappa^{\alpha}$ which satisfies chordal $\kappa$-restriction with parameter $\alpha$.
Moreover, if $K$ is a sample with law $\Pf_\kappa^{\alpha}$, then it satisfies the following geometric properties:
\begin{enumerate}[(i)]
\item If $\alpha=(6-\kappa)/(2\kappa)$, then $K$ is given by a chordal SLE$_\kappa$ curve, hence is a.s.\ a simple curve, see Figure~\ref{fig:chordal}(i). 

\item If  $\alpha\in ((6-\kappa)/(2\kappa),  (12-\kappa)(\kappa+4)/(16\kappa))$, then $K$ is a.s.\ not a simple curve, but has cut-points, see Figure~\ref{fig:chordal}(ii).

\item If $\alpha\ge  (12-\kappa)(\kappa+4)/(16\kappa)$, then $K$ a.s.\ does not have cut-points, see Figure~\ref{fig:chordal}(iii).
\end{enumerate}
\end{proposition}

\begin{proof}
The case (i) is a known property of chordal SLE$_\kappa$ curves  \cite{MR1992830}. For $\alpha > (6-\kappa)/(2\kappa)$, we will use the same method of construction as in \cite[Section 5.2]{MR2060031} for the $\kappa=8/3$ case.
We first construct the lower boundary of $K$ using a chordal SLE$_\kappa(\rho)$ curve $\eta_1$ in $\Ub$ from $1$ to $-1$ with marked point immediately to the right of $1$, where $\rho$ is related to $\alpha$ by~\eqref{eq:alpha-chordal}.
Note that $\alpha > (6-\kappa)/(2\kappa)$ corresponds to $\rho>0$.
Then, given $\eta_1$, we can construct the upper boundary of $K$ as a SLE$_\kappa(\rho-2)$ curve $\eta_2$ in the connected component $\Ub\setminus \eta_1$ which is above $\eta_1$, from $1$ to $-1$ with marked point immediately to the left of its starting point.
The compact set $K$ bounded by the two curves then satisfies chordal $\kappa$-restriction with parameter $\alpha$. The proof of the last fact is completely analogous to \cite[Section 5.2]{MR2060031}, namely we use the same restriction martingale as there, by changing $\kappa=8/3$ to general $\kappa$. For the sake of brevity, we leave the details to the interested reader.

The geometric property of $K$ is determined by whether $\eta_2$ hits $\eta_1$. Since $\eta_2$ is a SLE$_\kappa(\rho-2)$ curve, it a.s.\ hits the boundary (hence $\eta_1)$ if and only if $\rho<\kappa/2$, by \cite[Lemma 8.3]{MR1992830}. Putting this back into~\eqref{eq:alpha-chordal} completes the proof.
\end{proof}
We believe that~\eqref{eq:thm_range_chordal} is the maximal range for which chordal $\kappa$-restriction measures with parameter $\alpha$ exist. It was proved to be the case for $\kappa=8/3$ \cite{MR1992830}, but it remains to be proved for other $\kappa\in(0,4]$.

\subsubsection{General trichordal restriction measures}
Let us now define the general trichordal restriction measures. Fix $x,y,z \in \partial \Ub$. 
Let $\Omega_2$ be the collection of all  simply connected compact sets $K\subset \overline\Ub$ such that $K\cap \partial \Ub=\{x, y, z\}$.
Let $\Qc_2$ be the collection of all compact sets $A\subset\overline\Ub$ such that $\Ub\setminus A$ is simply connected and $x, y, z \not\in A$. 
For all $A\in\Qc_2$, let $\wh f_A$ be a conformal map from $\Ub\setminus A$ onto $\Ub$ that leaves $x, y, z$ fixed.

\begin{definition}[General trichordal restriction measure]\label{def:gen_trichordal}
For $\kappa\in(0,4]$ related to $c$ by~\eqref{kappa-c}, a probability measure $\Pb$ on $\Omega_2$ (or a random set $K$ of law $\Pb$) is said to satisfy trichordal $\kappa$-restriction with exponents $(\alpha, \beta,\gamma) \in\Rb^3$, if for all $A\in\Qc_2$, we have
\begin{align}\label{eq:gen_trichordal}
\frac{d\Pb(K)}{d\Pb_A(K)}\mathbf{1}_{K\cap A=\emptyset}=\mathbf{1}_{K\cap A=\emptyset} \wh f_A'(x)^\alpha \wh f_A(y)^\beta \wh f_A(z)^\gamma \exp\left(c(\kappa) \, m_{\Ub}(K,A)\right),
\end{align}
where $\Pb_A=\Pb\circ \wh f_A$ and $m_{\Ub}(K,A)$ is the mass under the Brownian loop measure of all loops in $\Ub$ that intersect both $K$ and $A$.
\end{definition}
For each $(\alpha, \beta,\gamma) \in\Rb^3$, the uniqueness of measures that satisfy Definition~\ref{def:gen_trichordal} remains to be proved (except for $\kappa=8/3$). In the following Proposition~\ref{prop:restriction_trichordal}, we will prove the existence of such measures for a certain range of $\alpha, \beta, \gamma$.

\begin{figure}[h!]
\centering
\includegraphics[width=\textwidth]{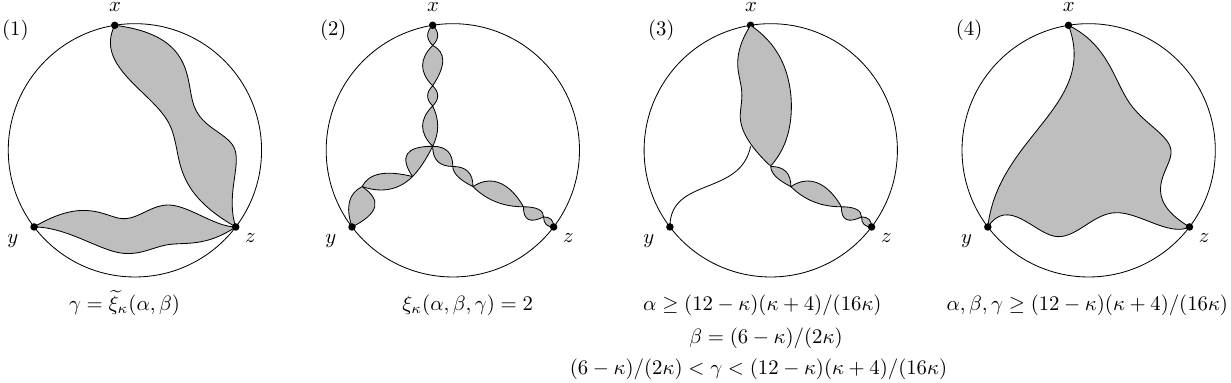}
\caption{Geometric properties of general trichordal restriction measures}
\label{fig:trichordal}
\end{figure}

\begin{proposition}\label{prop:restriction_trichordal}
For any $\kappa\in(0,4]$, and $\alpha, \beta, \gamma$ in the following range
\begin{enumerate}[(a)]
\item $\alpha, \beta,\gamma \ge (6-\kappa)/(2\kappa)$,
\item $\alpha \le \wt \xi_\kappa(\beta, \gamma), \,\, \beta \le \wt \xi_\kappa(\alpha, \gamma), \,\, \gamma \le  \wt \xi_\kappa(\alpha, \beta)$,
\item $\xi_\kappa(\alpha, \beta, \gamma) \le 2$
\end{enumerate}
there exists a measure $\Pf_\kappa^{\alpha,\beta,\gamma}$ which satisfies trichordal $\kappa$-restriction with parameters $\alpha, \beta, \gamma$.
Moreover, if $K$ is a sample with law $\Pf_\kappa^{\alpha, \beta, \gamma}$, then it satisfies the following geometric properties:
\begin{enumerate}[(i)]
\item If $\alpha = \wt \xi_\kappa(\beta, \gamma)$ (resp.\ $\beta = \wt \xi_\kappa(\alpha, \gamma)$, $\gamma =  \wt \xi_\kappa(\alpha, \beta)$), then the boundary point $x$ (resp.\ $y,z$) is a cut-point of $K$. See Figure~\ref{fig:trichordal} (1).

\item If $\xi_\kappa(\alpha, \beta, \gamma) = 2$, then $K$ has a triple disconnecting point in the middle. See Figure~\ref{fig:trichordal} (2).
\item For $\alpha < \wt \xi_\kappa(\beta, \gamma)$ (resp.\ $\beta < \wt \xi_\kappa(\alpha, \gamma)$, $\gamma <  \wt \xi_\kappa(\alpha, \beta)$), the local geometric property of $K$ in a small neighborhood of $x$ (resp.\ $y,z$) is the same as that of a chordal $\kappa$-restriction measure with parameter $\alpha$ (resp.\ $\beta, \gamma$). See Figure~\ref{fig:trichordal}.

%\item If $\alpha$ (resp.\ $\beta, \gamma$) is equal to $(6-\kappa)/(2\kappa)$, then $K$ is a.s.\ a simple curve in some neighborhood of $x$ (resp.\ $y,z$), see Figure~\ref{fig:trichordal} (2) (5). 
%
%\item If  $\alpha$ (resp.\ $\beta, \gamma$) is in $((6-\kappa)/(2\kappa),  (12-\kappa)(\kappa+4)/(16\kappa))$, then $K$ is a.s.\ not a simple curve in any neighborhood of $x$ (resp.\ $y,z$), but $K$ a.s.\ has cut-points in any neighborhood of $x$ (resp.\ $y,z$), see Figure~\ref{fig:trichordal} (3) (5) (6).
%
%\item If $\alpha$ (resp.\ $\beta, \gamma$) is at least  $(12-\kappa)(\kappa+4)/(16\kappa)$, then $K$ a.s.\ has no cut-points in some neighborhood of $x$ (resp.\ $y,z$), see Figure~\ref{fig:trichordal}(iii).

\end{enumerate}
\end{proposition}
\begin{proof}
We will use the same method of construction as in \cite{MR3827221}. The case (i) is a limiting special case, for which the boundaries of $K$ can be constructed using chordal SLE$_\kappa(\rho_1, \rho_2)$ curves.
For all the other cases, we can first construct the part of boundary between $x$ and $y$, using a chordal hypergeometric SLE curve $\eta_1$. Then given $\eta_1$, in the connected component of $\Ub\setminus\eta_1$ which contains $z$ on the boundary, we construct the part of boundary from $y$ to $z$, using another  chordal hypergeometric SLE curve $\eta_2$. Finally, given $\eta_1, \eta_2$, we construct the boundary between $x$ and $z$ using a third  chordal hypergeometric SLE curve in the appropriate connected component of $\Ub\setminus(\eta_1\cup\eta_2)$. 

The proofs of the above facts are completely analogous to \cite{MR3827221}, namely we only need to change the value of $\kappa$. The exponents $\wt\xi_\kappa$ and $\xi_\kappa$ in the range emerge from the equations of the chordal hSLE and the associated restriction martingale. For the sake of brevity, we leave the details to the interested reader.

The geometric properties of $K$ can be deduced from the boundary-hitting behavior of the chordal hypergeometric SLEs. Such properties were written down in  \cite[Section 4.3]{MR3827221} for $\kappa=8/3$, but can easily be generalized to other $\kappa\in(0,4]$. 
\end{proof}
We believe that the range in Proposition~\ref{prop:restriction_trichordal} is the maximal range for which trichordal $\kappa$-restriction measures with parameters $(\alpha,\beta,\gamma)$ exist. It was proved to be the case for $\kappa=8/3$ \cite{MR3827221}, but it remains to be proved for other $\kappa\in(0,4]$.

\begin{remark}
%Let us add some details to the proof of Proposition~\ref{prop:restriction_trichordal}. 
%In  \cite{MR3827221}, the equation of chordal hypergeometric SLE was only written down for $\kappa=8/3$: It could have been written down for general $\kappa$, but we have substituted $\kappa$ with the value $8/3$ in many (not all) places. It is not difficult to replace $8/3$ by $\kappa$ in  \cite{MR3827221}, but for clarity, and in view of the confusion in the community, let us write down the equation of chordal hypergeometric hSLE$_\kappa(\nu, \mu,\lambda)$ here.

Let $F$ denote the hypergeometric function ${_2F_1}(a, b; c; \cdot)$ where
\begin{align*}
a= \lambda+\mu+\nu+8/\kappa-1, \quad b= \mu+\nu-\lambda, \quad c= 2\nu+4/\kappa.
\end{align*}
The hypergeometric hSLE$_\kappa(\nu, \mu,\lambda)$ in the upper half-plane from $0$ to $\infty$ with force points $x_2<x_1 <0$ is defined in  \cite{MR3827221} to be a curve $(\gamma_t, t\ge 0)$ generated by the Loewner equation
\begin{align*}
d g_t(z) =\frac{2dt}{g_t(z) - W_t},
\end{align*}
where $g_t$ is the conformal map from the infinite connected component of $\Hb\setminus \gamma([0,t])$ onto $\Hb$ normalized by $g_t(z)=z+2t/z+o(1/z)$. The driving function $W_t$ satisfies
\begin{align}\label{eq:hsle1}
d W_t = \sqrt{\kappa} d B_t + \left( \frac{\kappa \nu}{W_t -O_t} + \frac{\kappa\mu}{W_t -V_t} -\frac{\kappa}{O_t- V_t} \frac{F'(\frac{O_t-W_t}{O_t- V_t})}{F (\frac{O_t-W_t}{O_t- V_t} )} \right) dt,
\end{align}
where $O_t=g_t(x_1)$ and $V_t=g_t(x_2)$.\footnote{The equation~\eqref{eq:hsle1} was written for general $\kappa$ in \cite[(4.21)]{MR3827221}. However, when defining the parameters $a,b, c$ in \cite[(4.2)]{MR3827221}, we directly substituted $\kappa$ with $8/3$, because we were only interested in the $\kappa=8/3$ case there.} 
The intermediate iSLE$(\kappa; \rho)$ defined by Zhan in \cite{MR2646499} is equal to hSLE$_\kappa(\nu, \mu, \lambda)$ (see e.g.\ \cite[Remark 2.17]{MR4235483}) for
$$\mu=\nu=(\rho+2)/\kappa, \quad  \lambda=1-8/\kappa.$$
The fact that the iSLE is a subfamily of hSLE can be seen by comparing the driving functions, but can be more directly and intuitively understood as a consequence of the trichordal restriction property which ``defines'' the hypergeometric SLE: We believe that the family  hSLE$_{\kappa}(\nu, \mu,\lambda)$ is the maximal 
 family of curves that satisfy the trichordal $\kappa$-restriction (this is proved in \cite{MR3827221} for $\kappa=8/3$).  
The iSLE$(\kappa;\rho)$ is the time-reversal of the SLE$_\kappa(\rho)$, hence satisfies trichordal $\kappa$-restriction, so it should belong to the family of hSLE.
Similarly, one can also see that the family of hSLE contains the family of SLE$_\kappa(\rho_1, \rho_2)$ processes. It is easy to work out the correspondence of the parameters in this case:
%It is pointed out in  \cite{MR3827221} that hSLE$_{\kappa}(\nu, \mu,\lambda)$ is equal to SLE$_\kappa(\rho_1, \rho_2)$ if $b=c$, with $\rho_1=\kappa\nu, \rho_2=\kappa(-\lambda-\nu-8/\kappa+1)$. Consequently, 
%More precisely, SLE$_\kappa(\rho_1, \rho_2)$ is equal to hSLE$_{\kappa}(\nu, \mu,\lambda)$ with
 \begin{align*}
\nu=\rho_1/\kappa, \quad \mu=-(\rho_2+4)/\kappa+1, \quad \lambda=-(\rho_1+\rho_2+8)/\kappa+1.
\end{align*}
For the same reason, the time-reversal of an SLE$_\kappa(\rho_1, \rho_2)$ with force points $x_
2<x_1=0^-$ should also belong to the hSLE$_{\kappa}(\nu, \mu,\lambda)$ family. For this case, the correspondence of the parameters is computed in \cite[Remark 5.2]{MR4374683}:
\begin{align*}
\nu=(\rho_1+\rho_2)/\kappa, \quad \mu=(\kappa-4-\rho_2)/\kappa, \quad \lambda=(\kappa-8-\rho_1)/\kappa.
\end{align*}
\end{remark}

%\subsection{Commutation relation for hypergeometric SLEs}\label{subsec:commutation}
%In this section, we will make a few remarks on the commutation relation for both chordal and radial hypergeometric SLEs.
%The commutation relation allows one to couple several SLEs in the same domaine, and was a key ingredient in proving the reversibility \cite{MR2435856} and duality \cite{MR2439609, MR2571956} of SLEs. This type of relation was studied in a general framework in \cite{MR2358649}.
%
%The restriction measures allow us to visualize systems of commuting SLEs in a direct and simple way.
%The restriction measures also allow us to 
%
%\subsubsection{Chordal hypergeometric SLEs}
%
%\subsubsection{Radial hypergeometric SLEs}
%
%Both the chordal and radial hypergeometric SLEs should satisfy certain commutation relations, when the parameters are well chosen. 

\appendix
\section{More computations about SLE}\label{A1}
In this appendix, we are going to prove some results used in Section~\ref{SS martingale}. We are mostly going to carry out It\^o calculus, taking into account of the coordinate changes under various conformal maps.
The computations are lengthy, but they seem to be unavoidable.  Computations of this type have occurred before in other SLE works (see, e.g., \cite{MR1992830}), but in simpler settings. 

\subsection{It\^o computation for radial SLE}\label{A:ito}
In this section, we are going to carry out It\^o computation for a radial SLE given by~\eqref{radial eq} with driving function $(W_t)$. Our only assumption on $W_t$ is that it is a semi-martingale with martingale part $\sqrt{\kappa} dB_t$. Otherwise, the computation here does not depend on the exact form of $(W_t)$, hence is valid for general forms of radial SLE$_\kappa$ with any $\kappa>0$.

We will mainly look at how the curve transforms under the conformal map $f_A$ for $A\in\Qc$.
Recall that we have defined $A_t=g_t(A)$, $h_t=f_{A_t}$ and $\wt g_t=h_t\circ g_t \circ f_A$ (see Figure~\ref{commutation}).
For all $z\in \Rb$, define $\phi_t(z)=-i\ln h_t(e^{iz})$ where the branch of the logarithm is chosen in a way that $\phi_0(z)=z$ and $\phi_t(z)$ is continuous in $t$. Let $\wt W_t=\phi_t(W_t)$. 
Then we have
\begin{align*}
|h_t' (e^{iW_t})| =\phi_t' (W_t), \quad |h_t'(e^{iV_t})|=\phi_t'(V_t), \quad \nu_t =(\phi_t(W_t)-\phi_t(V_t))/2.
\end{align*}
We aim to derive the It\^o differential of the terms $ h_t(e^{iW_t})$, $ h_t'(e^{iW_t})$ and $\phi_t'(W_t)$.

Since $h_t=\wt g_t\circ f_A \circ g_t^{-1}$, using the chain rule, we get
\begin{align*}
d \wt g_t(z)=\wt g_t(z) \frac{e^{i\wt W_t}+\wt g_t(z)}{e^{i\wt W_t}-\wt g_t(z)} |h_t'(e^{iW_t})|^2 dt.
\end{align*}
Knowing that $h_t'(e^{iW_t})=|h_t'(e^{iW_t})|e^{i\wt W_t -i W_t}$, this implies that
\begin{align}\label{eq1}
\left(\frac{d}{dt} h_t\right) (z) = h_t(z) \frac{e^{i\wt W_t}+h_t(z)}{e^{i\wt W_t}-h_t(z)} |h_t'(e^{iW_t})|^2-h_t'(z) z \frac{e^{iW_t}+z}{e^{i W_t}-z}.
\end{align}
In~\eqref{eq1}, we want to plug in $z=e^{iW_t}$. The denominators of the right hand-side of~\eqref{eq1} are zero at $z=e^{iW_t}$, but~\eqref{eq1} still has a limit as $z$ tends to $e^{iW_t}$.
Let $\eps=e^{iW_t}-z$, then $h_t(z)=e^{i\wt W_t}-h_t'(e^{iW_t})\eps$ and $h_t'(z)=h_t'(e^{iW_t})-h_t''(e^{iW_t})\eps$. By looking at the Taylor development of the right hand-side of~\eqref{eq1} as $\eps$ tends to $0$, we get that
\begin{align*}
\left(\frac{d}{dt} h_t\right)(e^{iW_t})=-3 |h_t'(e^{iW_t})|^2 e^{i\wt W_t}+3h_t'(e^{iW_t})e^{iW_t}+3h_t''(e^{iW_t})e^{2iW_t}.
\end{align*}
Therefore
\begin{align*}
d h_t(e^{iW_t})=& (\partial_t h_t)(e^{iW_t}) dt +h_t'(e^{iW_t}) e^{iW_t} i dW_t -\frac{\kappa}{2} \left[h_t''(e^{iW_t})e^{2i W_t}
+h_t'(e^{iW_t})e^{iW_t}\right] dt\\
=& h_t'(e^{iW_t}) e^{iW_t} i dW_t -3\frac{h_t'(e^{iW_t})^2}{h_t(e^{iW_t})}e^{2iW_t} dt +(3-\frac{\kappa}{2}) \left[ h_t'(e^{iW_t})e^{iW_t}+h_t''(e^{iW_t})e^{2iW_t} \right] dt.
\end{align*}
Then
\begin{align}
\notag d\phi_t(W_t)=&-i \frac{dh_t(e^{iW_t})}{h_t(e^{iW_t})}+i\frac{d\langle h_t(e^{iW_t}) \rangle}{2h_t(e^{iW_t})^2}\\
\notag =&\frac{h_t'(e^{iW_t})}{h_t(e^{iW_t})}e^{iW_t} dW_t +3i  \frac{|h_t'(e^{iW_t})|^2}{h_t(e^{iW_t})} e^{i\wt W_t}dt-3i\frac{h_t'(e^{iW_t})}{h_t(e^{iW_t})}e^{iW_t} dt\\
\notag &-i3\frac{h_t''(e^{iW_t})}{h_t(W_t)}e^{2iW_t} dt+i\frac{\kappa}{2} \left[\frac{h_t''(e^{iW_t})}{h_t(e^{iW_t})}e^{2i W_t}
+\frac{h_t'(e^{iW_t})}{h_t(e^{iW_t})}e^{iW_t}\right] dt -i\frac{\kappa}{2}\frac{h'(e^{iW_t})^2}{h(e^{iW_t})^2}e^{2iW_t}dt\\
\label{phi_t(W_t)} =&\phi_t'(W_t) d W_t + (\frac{\kappa}{2}-3) \phi_t''(W_t) dt.
\end{align}
Note that
\begin{align*}
&\phi_t'(z)=\frac{h_t'(e^{iz})}{h_t(e^{iz})}e^{iz}, \quad \phi_t''(z)=\left[ \frac{h_t''(e^{iz})}{h_t(e^{iz})}-\frac{h_t'(e^{iz})^2}{h_t(e^{iz})^2} \right]e^{2iz}i+\frac{h_t'(e^{iz})}{h_t(e^{iz})}e^{iz}i,\\
&\phi_t'''(z)=\left[-\frac{h_t'''(e^{iz})}{h_t(e^{iz})}+3\frac{h_t'(e^{iz}) h_t''(e^{iz})}{h_t(e^{iz})^2}-2\frac{h_t'(e^{iz})^3}{h_t(e^{iz})^3} \right]e^{3iz} +3\left[\frac{h_t'(e^{iz})^2}{h_t(e^{iz})^2} -\frac{h_t''(e^{iz})}{h_t(e^{iz})}\right]e^{2iz}-\frac{h_t'(e^{iz})}{h_t(e^{iz})} e^{iz}.
\end{align*}
Differentiating (\ref{eq1}), we get
\begin{align*}
\left(\frac{d}{dt} h'_t\right)(z)=& h_t'(z)  \frac{e^{i\wt W_t}+h_t(z)}{e^{i\wt W_t}-h_t(z)} |h_t'(e^{iW_t})|^2+ h_t(z) \frac{2e^{i\wt W_t} h_t'(z)}{(e^{i\wt W_t}-h_t(z))^2} |h_t'(e^{iW_t})|^2\\
& -h_t''(z)z \frac{e^{iW_t}+z}{e^{i W_t}-z}-h_t'(z)\frac{e^{iW_t}+z}{e^{i W_t}-z}-h_t'(z)z\frac{2e^{iW_t}}{(e^{iW_t}-z)^2}.
\end{align*}
Therefore
\begin{align*}
\left(\frac{d}{dt} h'_t\right)(e^{iW_t})=& |h_t'(e^{iW_t})|^2\left( -\frac{h_t''(e^{iW_t})}{h_t'(e^{iW_t})}e^{i\wt W_t}-h_t'(e^{iW_t})\right)
+2e^{i\wt W_t} |h_t'(e^{iW_t})|^2\Bigg( \frac{h_t'''(e^{iW_t})}{2h_t'(e^{iW_t})^2}e^{i\wt W_t}+\frac{3h_t''(e^{iW_t})}{2h_t'(e^{iW_t})}\\
&-\frac{h_t'''(e^{iW_t})}{3h_t'(e^{iW_t})^2}e^{i\wt W_t}
+\frac{3h_t''(e^{iW_t})^2}{4h_t'(e^{iW_t})^3}e^{i\wt W_t}-\frac{h_t''(e^{iW_t})}{h_t'(e^{iW_t})}-\frac{h_t''(e^{iW_t})^2}{h_t'(e^{iW_t})^3}e^{i\wt W_t} \Bigg)+3h_t''(e^{iW_t}) e^{iW_t}\\
&+2h_t'''(e^{iW_t}) e^{2iW_t}+h_t'(e^{iW_t})+2h_t''(e^{iW_t}) e^{iW_t}-2e^{iW_t} \left(h_t''(e^{iW_t})+\frac12 h_t'''(e^{iW_t}) e^{iW_t}\right)\\
=&-e^{2iW_t}\frac{h_t'(e^{iW_t})^3}{h_t(e^{iW_t})^2}+3h_t''(e^{iW_t}) e^{iW_t} +\frac43 e^{2iW_t}h_t'''(e^{iW_t})
+h_t'(e^{iW_t})-\frac12 e^{2iW_t} \frac{h_t''(e^{iW_t})^2}{h_t'(e^{iW_t})}.
\end{align*}
Then
\begin{align*}
d h_t'(e^{iW_t})=& (\partial_t h_t')(e^{iW_t}) dt+h_t''(e^{iW_t}) e^{iW_t} i dW_t -\frac{\kappa}{2}\left( h_t'''(e^{iW_t})e^{2iW_t} + h_t''(e^{iW_t})e^{iW_t}\right)dt\\
=& h_t''(e^{iW_t}) e^{iW_t} i dW_t-e^{2iW_t}\frac{h_t'(e^{iW_t})^3}{h_t(e^{iW_t})^2} dt +(3-\frac{\kappa}{2}) h_t''(e^{iW_t}) e^{iW_t}dt \\
&+(\frac43-\frac{\kappa}{2}) e^{2iW_t}h_t'''(e^{iW_t})dt
+h_t'(e^{iW_t}) dt -\frac12 e^{2iW_t} \frac{h_t''(e^{iW_t})^2}{h_t'(e^{iW_t})} dt.
\end{align*}
Consequently
\begin{align*}
d\phi_t'(W_t)=&\frac{dh_t'(e^{iW_t})}{h_t(e^{iW_t})}e^{iW_t} dt-\frac{h_t'(e^{iW_t}) dh_t(e^{iW_t})}{h_t(e^{iW_t})^2}e^{iW_t}-\kappa \frac{h_t'(e^{iW_t})^3}{h_t(e^{iW_t})^3}e^{3iW_t}dt+\frac{h_t'(e^{iW_t})}{h_t(e^{iW_t})} e^{iW_t} i dW_t\\
&-\frac{\kappa}{2} \frac{h_t'(e^{iW_t})}{h_t(e^{iW_t})} e^{iW_t} dt +\kappa \frac{h_t'(e^{iW_t}) h_t''(e^{iW_t})}{h_t(e^{iW_t})^2} e^{3iW_t} dt -\kappa \frac{h_t''(e^{iW_t})}{h_t(e^{iW_t})}e^{2iW_t} dt +\kappa \frac{h_t'(e^{iW_t})^2}{h_t(e^{iW_t})^2} e^{2i W_t} dt\\
=&\phi_t''(W_t) dW_t- (\frac{4}{3}-\frac{\kappa}{2}) \phi_t'''(W_t)dt +\frac12 \frac{\phi_t''(W_t)^2}{\phi_t'(W_t)} dt+\frac16(\phi_t'(W_t)-\phi_t'(W_t)^3) dt.
\end{align*}

\subsection{Change of driving function under conformal map}
\begin{lemma}\label{lem:change_measure_conformal}
Let $\gamma$ be a radial hSLE$_\kappa(\mu,\nu)$ in $\Ub$. For any $A\in\Qc$, let $\wt\gamma$ be the image by $f_A^{-1}$ of $\gamma$. Then up to reparametrization, $\wt \gamma$ is a radial SLE with driving function
\begin{align}\label{eq:df_t_gamma}
dW_t= \sqrt{\kappa}d B_t+\!\left( \frac{6-\kappa}{2}\frac{\phi_t''(W_t)}{\phi_t'(W_t)} +\frac{\kappa}{2} \phi_t'(W_t) \frac{G'(\nu_t)}{G(\nu_t)} \right) dt.
\end{align}
\end{lemma}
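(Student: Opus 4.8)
The plan is to prove Lemma~\ref{lem:change_measure_conformal} by a coordinate-change (It\^o) computation that is, in effect, the calculus of Section~\ref{A:ito} run backwards. Let $\gamma$ be the hSLE$_\kappa(\mu,\nu)$, with radial Loewner maps $(g_s)$ in its conformal-radius parametrization, driving function $W_s$, marked-point image angle $V_s$ and angle $\theta_s=(W_s-V_s)/2$, as in~\eqref{radial eq}--\eqref{driving-function}. Put $\wt\gamma:=f_A^{-1}(\gamma)$ and let $(\wt g_t)$ be its radial Loewner maps, $\wt W_t$ its driving function, $\wt V_t=\wt g_t(f_A^{-1}(x_0))$ its marked-point image angle and $\wt\theta_t=(\wt W_t-\wt V_t)/2$. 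Following Section~\ref{A:ito} with $\gamma$ there replaced by $\wt\gamma$, write $\wt A_t=\wt g_t(A)$, $\wt h_t=f_{\wt A_t}$, $\wt\phi_t(z)=-i\ln\wt h_t(e^{iz})$, and let $\wt\nu_t=\tfrac12\bigl(\wt\phi_t(\wt W_t)-\wt\phi_t(\wt V_t)\bigr)$ be the transported angle (the quantity $\nu_t$ of~\eqref{eq:df_t_gamma}). These are exactly the objects occurring in~\eqref{eq:df_t_gamma} (there written without tildes), so the claim reduces to showing that $\wt W_t$ satisfies that SDE.

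The key preliminary step is the commuting-diagram relation. Applying the chain rule to~\eqref{radial eq} — this is the computation behind~\eqref{eq1} and the identity $\partial_t\wt g_t(z)=\wt g_t(z)\,\tfrac{e^{i\wt\phi_t(\wt W_t)}+\wt g_t(z)}{e^{i\wt\phi_t(\wt W_t)}-\wt g_t(z)}\,|\wt h_t'(e^{i\wt W_t})|^{2}$ established in Section~\ref{A:ito} — one finds that, after the path-dependent time change $t\mapsto s(t)$ with $ds/dt=\wt\phi_t'(\wt W_t)^{2}$, the composition $\wt h_t\circ\wt g_t\circ f_A^{-1}$ coincides with the Loewner map $g_{s(t)}$ of $\gamma$ (here one checks, using positivity of $g_s'(0)$, $\wt g_t'(0)$ and $\wt h_t'(0)$, that this composition is the \emph{standard} radial map and not a rotation of it). Under this identification the driving function and the marked point transform by $W_{s(t)}=\wt\phi_t(\wt W_t)$ and $V_{s(t)}=\wt\phi_t(\wt V_t)$; consequently the angle of $\gamma$ at the corresponding time is $\theta_{s(t)}=\tfrac12\bigl(\wt\phi_t(\wt W_t)-\wt\phi_t(\wt V_t)\bigr)=\wt\nu_t$. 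This last identity is precisely what makes the transported drift involve $G'/G$ at the marked-point angle, as in~\eqref{driving-function}.

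Now apply the It\^o formula~\eqref{phi_t(W_t)} in the $\wt\gamma$-frame: differentiating in $t$,
\[
dW_{s(t)}=\wt\phi_t'(\wt W_t)\,d\wt W_t+\bigl(\tfrac{\kappa}{2}-3\bigr)\,\wt\phi_t''(\wt W_t)\,dt .
\]
On the other hand, since $\gamma$ is the hSLE, $W$ solves~\eqref{driving-function}, i.e.\ $dW_s=\sqrt{\kappa}\,dB_s+\tfrac{\kappa}{2}\tfrac{G'(\theta_s)}{G(\theta_s)}\,ds$; reading this along $s=s(t)$, using $ds=\wt\phi_t'(\wt W_t)^{2}\,dt$ and $dB_{s(t)}=\wt\phi_t'(\wt W_t)\,d\wt B_t$ for a Brownian motion $\wt B$ in the $t$-variable, and recalling $\theta_{s(t)}=\wt\nu_t$, this reads
\[
dW_{s(t)}=\sqrt{\kappa}\,\wt\phi_t'(\wt W_t)\,d\wt B_t+\tfrac{\kappa}{2}\,\tfrac{G'(\wt\nu_t)}{G(\wt\nu_t)}\,\wt\phi_t'(\wt W_t)^{2}\,dt .
\]
Equating the two expressions for $dW_{s(t)}$ and dividing by $\wt\phi_t'(\wt W_t)$ gives
\[
d\wt W_t=\sqrt{\kappa}\,d\wt B_t+\Bigl(\tfrac{6-\kappa}{2}\,\tfrac{\wt\phi_t''(\wt W_t)}{\wt\phi_t'(\wt W_t)}+\tfrac{\kappa}{2}\,\wt\phi_t'(\wt W_t)\,\tfrac{G'(\wt\nu_t)}{G(\wt\nu_t)}\Bigr)\,dt ,
\]
because $-(\tfrac{\kappa}{2}-3)=\tfrac{6-\kappa}{2}$; this is exactly~\eqref{eq:df_t_gamma}. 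Finally, $\wt\gamma$ is the continuous image of the hSLE curve under $f_A^{-1}$, hence itself a curve, so $\wt W_t$ is genuinely the Loewner driving function of a radial SLE; combined with weak uniqueness for~\eqref{eq:df_t_gamma} (its drift is locally Lipschitz away from the singular configurations), this also identifies the law of a radial SLE driven by~\eqref{eq:df_t_gamma} with that of $f_A^{-1}(\gamma)$, as used in Section~\ref{SS martingale}.

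I expect the only genuinely delicate part to be the second paragraph: establishing the commuting relation $\wt h_t\circ\wt g_t\circ f_A^{-1}=g_{s(t)}$ with the correct time change requires careful bookkeeping of the conformal-radius normalizations and of which boundary points and interior point each of $f_A^{-1}$, $\wt g_t$, $\wt h_t$ fixes or moves — essentially the chain-rule computation of Section~\ref{A:ito} transposed to this configuration. The subsequent It\^o step is then a routine rearrangement of~\eqref{phi_t(W_t)}, and the identification $\theta_{s(t)}=\wt\nu_t$ is exactly the ingredient that makes the transported drift coincide with the hSLE driving term.
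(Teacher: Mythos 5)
Your proposal uses exactly the same ingredients as the paper's proof --- the coordinate-change formula~\eqref{phi_t(W_t)}, the time change $ds/dt=\phi_t'(W_t)^2$, and the identification of the transported marked-point angle with the argument of $G'/G$ --- but you run the computation in the forward direction (from the hSLE $\gamma$ to $\wt\gamma=f_A^{-1}(\gamma)$), whereas the paper deliberately argues in reverse: it takes $\wt\gamma$ to be a radial SLE driven by~\eqref{eq:df_t_gamma}, applies~\eqref{phi_t(W_t)} to compute the driving function of $f_A(\wt\gamma)$, notes that the $\phi_t''$ terms cancel, and recognizes the hSLE equation~\eqref{driving-function} after the time change $s(t)=\int_0^t\phi_r'(W_r)^2\,dr$. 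The two directions are equivalent given weak uniqueness for the two SDEs, which you invoke at the end, and your algebra (including $-(\kappa/2-3)=(6-\kappa)/2$ and the transport $\theta_{s(t)}=\wt\nu_t$ of the marked point) matches the paper's.

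The one step in your direction that needs more care is
\begin{align*}
dW_{s(t)}=\wt\phi_t'(\wt W_t)\,d\wt W_t+\bigl(\tfrac{\kappa}{2}-3\bigr)\,\wt\phi_t''(\wt W_t)\,dt .
\end{align*}
Formula~\eqref{phi_t(W_t)} is derived in Section~\ref{A:ito} under the standing assumption that the driving function it is applied to is a semimartingale with martingale part $\sqrt{\kappa}\,B$; so writing this line presupposes that $\wt W_t$ is a semimartingale with $d\langle\wt W\rangle_t=\kappa\,dt$, which is part of what the lemma asserts and is not given a priori for the driving function of a conformal image. This is repairable --- either apply It\^o to $\wt W_t=\wt\phi_t^{-1}(W_{s(t)})$, where $W_s$ is the \emph{known} semimartingale~\eqref{driving-function}, or simply reverse the argument as the paper does --- but as written the derivation is circular at exactly that point, and this is presumably why the paper opens its proof with ``it is equivalent to prove the statement in the other direction.''
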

\begin{proof}
It is equivalent to prove the statement in the other direction: Suppose that $\wt \gamma$ is a radial SLE in $\Ub$ driven by~\eqref{eq:df_t_gamma}, we will show that $f_A(\wt\gamma)$ is distributed as a radial hSLE$_\kappa(\mu,\nu)$ in $\Ub$, up to reparametrization.
Let $\gamma$ be $f_A(\wt\gamma)$. Then the driving function of $\gamma$ is given by $\phi_t(W_t)$ where $W_t$ is given by~\eqref{eq:df_t_gamma}.
By~\eqref{phi_t(W_t)}, we get that
\begin{align*}
d\phi_t(W_t)=\phi_t'(W_t)\sqrt{\kappa} dB_t +\frac{\kappa}{2} \phi_t'(W_t)^2 \frac{G'(\nu_t)}{G(\nu_t)} dt.
\end{align*}
Making the time change $s(t)=\int_0^t \phi_r'(W_r)^2 dr$, $\gamma$ parametrized by $s$ is indeed a radial hSLE$_\kappa(\mu,\nu)$ in $\Ub$.
\end{proof}

\subsection{Proof of Lemma~\ref{lem:martingale}}\label{A3}
We are now going to prove Lemma~\ref{lem:martingale}. Having obtained the intermediate results in Appendix~\ref{A:ito}, we will make straightforward application of It\^o formula in this section. 
We believe that it is not possible to bypass this computation (also see Remark~\ref{remark}). 
In order for $(M_t)$ to be a martingale, as we will show in the computation below, $G$ has to satisfy~\eqref{diff-G}. In reality, we have obtained $G$ by solving the second order linear differential equation~\eqref{diff-G} (with additional conditions), but we have decided to postpone this computational part to the very end.
\smallskip

We consider here a radial hSLE$_\kappa(\mu, \nu)$ with driving function $(W_t)$ given by~\eqref{driving-function}.
We will prove that the following is a local martingale:
\begin{align}\label{mart}
M_t=&|h_t'(0)|^{e_1} \, |h_t'(e^{iW_t})|^{e_2} \, |h_t'(e^{iV_t})|^{e_3} \, \frac{G(\nu_t)}{G(\theta_t)}\, \exp\left( \int_0^t -\frac{c(\kappa)}{6} |Sh_s(W_s)| ds \right),
\end{align}
where $e_1=2\mu$, $e_2=(6-\kappa)/(2\kappa)$ and $e_3=\nu$. We will perform It\^o calculus and show that the drift term of $d M_t$ is zero.

First note that
\begin{align}\label{M1}
\frac{d M_t}{M_t}=d \ln M_t +\frac{d\langle M\rangle_t}{2 M_t^2}
\end{align}
and
\begin{equation}\label{M2}
\begin{split}
d \ln M_t=&e_1 d\ln |h_t'(0)| +e_2 d \ln  |h_t'(e^{iW_t})| +e_3 d \ln  |h_t'(e^{iV_t})|\\
& + d\ln G(\nu_t) - d\ln G(\theta_t)-\frac{c(\kappa)}{6} |Sh_t(W_t)|dt.
\end{split}
\end{equation}
For ease, we introduce the notations
\begin{align}\label{x123}
X_1=\phi_t'(W_t),\, X_2=\phi_t''(W_t),\, X_3=\phi_t'''(W_t), \, Y_1=\phi_t'(V_t).
\end{align}
Using the results of Section~\ref{A:ito}, we have
\begin{align*}
& d\phi_t(W_t)=X_1 \sqrt{\kappa}d B_t + X_1 \frac{\kappa}{2} \frac{G'(\theta_t)}{G(\theta_t)} dt + (\kappa/2-3) X_2 dt\\
& d \phi_t(V_t)=-X_1^2 \cot(\nu_t) dt\\
& d \phi_t'(W_t) = X_2 \sqrt{\kappa} dB_t +X_2 \frac{\kappa}{2} \frac{G'(\theta_t)}{G(\theta_t)} dt +\left(\frac{X_2^2}{2X_1}+\frac{X_1-X_1^3}{6} \right) dt +(\frac{\kappa}{2}-\frac43) X_3\\
& d \phi_t'(V_t) =-\frac12 X_1^2 Y_1 \frac{1}{\sin(\nu_t)^2} dt +\frac12 Y_1\frac{1}{\sin(\theta_t)^2} dt \\
& d \theta_t = \frac12 \sqrt{\kappa} dB_t +\frac{\kappa}{4 }\frac{G'(\theta_t)}{G(\theta_t)} dt +\frac12 \cot(\theta_t) dt\\
& d\nu_t =\frac{X_1}{2}\sqrt{\kappa} dB_t +\frac{X_1}{2} \frac{\kappa}{2} \frac{G'(\theta_t)}{G(\theta_t)} dt +\left(\frac{\kappa}{4}-\frac32\right)X_2 dt +\frac12 X_1^2\cot(\nu_t) dt
\end{align*}
Therefore
\begin{align*}
&&d \ln  |h_t'(e^{iW_t})| &= \frac{dX_1}{X_1}-\frac{d\langle X_1\rangle_t}{2X_1^2}  \\
&&&= \frac{X_2}{X_1} \sqrt{\kappa} dB_t +  \frac{X_2}{X_1} \frac{\kappa}{2} \frac{G'(\theta_t)}{G(\theta_t)} dt+ \left(\frac{X_2^2}{2X_1^2}+(\frac{\kappa}{2}-\frac43)\frac{X_3}{X_1}+\frac{1-X_1^2}{6} \right) dt - \frac{\kappa X_2^2}{2 X_1^2} dt\\
&& d \ln  |h_t'(e^{iV_t})| &=-\frac12 X_1^2  \frac{1}{\sin(\nu_t)^2} dt +\frac12 \frac{1}{\sin(\theta_t)^2} dt \\
&& d \ln G(\nu_t) &= \frac{G'(\nu_t)}{G(\nu_t)} d \nu_t +\frac12\left(\frac{G''(\nu_t)}{G(\nu_t)}-\frac{G'(\nu_t)^2}{G(\nu_t)^2}\right) \frac{X_1^2 \kappa}{4} dt \\
&& d \ln G(\theta_t) &= \frac{G'(\theta_t)}{G(\theta_t)} d \theta_t +\frac12\left(\frac{G''(\theta_t)}{G(\theta_t)}-\frac{G'(\theta_t)^2}{G(\theta_t)^2}\right) \frac{\kappa}{4} dt.
\end{align*}
Combined with~\eqref{M1} and~\eqref{M2}, it follows that the local martingale part of $d M_t$ is equal to
\begin{align}\label{lm}
M_t \left( e_2\frac{X_2}{X_1} +\frac{G'(\nu_t)}{G(\nu_t)}\frac{X_1}{2}-\frac{G'(\theta_t)}{G(\theta_t)}\frac12  \right) \sqrt{\kappa} dB_t.
\end{align}
Hence we have
\begin{align*}
\frac{d\langle M\rangle_t}{M_t^2}=\left( e_2\frac{X_2}{X_1} +\frac{G'(\nu_t)}{G(\nu_t)}\frac{X_1}{2}-\frac{G'(\theta_t)}{G(\theta_t)}\frac12  \right)^2 \kappa dt.
\end{align*}
Note also that by~\eqref{eq:Sch}, we have
\begin{align*}
|Sh_t(W_t)|=X_3/X_1-(3X_2^2)/(2X_1^2).
\end{align*}
Putting the results above back into~\eqref{M1} and~\eqref{M2}, we are now able to write down the drift term of $d M_t/ M_t$, which is the sum of all the terms listed below:
\begin{align}
\label{E1}
X_1^2 \quad \times  & \quad e_1 -\frac{e_2}{6}-\frac{c(\kappa)}{12}-\frac{e_3}{2\sin(\nu_t)^2}+\frac{G'(\nu_t)}{G(\nu_t)}\frac{\cot(\nu_t)}{2} +\frac{\kappa}{8} \frac{G''(\nu_t)}{G(\nu_t)} \\
\notag
X_2/X_1 \quad \times & \quad e_2 (\frac{\kappa}{2}-\frac{\kappa}{2})\frac{G'(\theta_t)}{G(\theta_t)}=0\\
\notag
X_2^2/X_1^2\quad \times  & \quad e_2 (1-\kappa)/2 +e_2^2\kappa/2 + c(\kappa)/4 =0 \\
\notag
X_3/X_1 \quad \times & \quad e_2(\frac{\kappa}{2}-\frac{4}{3})-c(\kappa)/6=0\\
\notag
X_2\quad \times& \quad \left(\frac{\kappa}{4}-\frac{3}{2}+\frac{e_2\kappa}{2}\right) \frac{G'(\nu_t)}{G(\nu_t)}=0 \\
\notag
X_1\quad \times & \quad \left( \frac{\kappa}{4}-\frac{\kappa}{4} \right) \frac{G'(\nu_t)}{G(\nu_t)} \frac{G'(\theta_t)}{G(\theta_t)}=0\\
\label{E2}
1 \quad \times  &\quad -e_1+\frac{e_2}{6}+\frac{c(\kappa)}{12}+\frac{e_3}{2\sin(\theta_t)^2}- \frac{\cot(\theta_t) G'(\theta_t)}{2 G(\theta_t)} -\frac{\kappa}{8}  \frac{G''(\theta_t)}{G(\theta_t)}.
\end{align}
Note that $G$ is solution to the differential equation~\eqref{diff-G}, hence the terms~\eqref{E1} and~\eqref{E2} are also equal to zero.
Therefore, the drift term of $dM_t/M_t$ is zero, hence $M$ is indeed a local martingale. 
This completes the proof of Lemma~\ref{lem:martingale}.

Finally, let us note the following result which is used in the proof of Proposition~\ref{prop-restriction}: By~\eqref{lm} and the fact that $M$ is a local martingale, we have
\begin{align}\label{eq:dM/M}
\frac{dM_t}{M_t}=\!\left(\frac{6-\kappa}{2\kappa} \frac{\phi_t''(W_t)}{\phi_t'(W_t)} +\frac{\phi_t'(W_t)}{2} \frac{G'(\nu_t)}{G(\nu_t)} -\frac12 \frac{G'(\theta_t)}{G(\theta_t)} \right) \sqrt{\kappa} dB_t.
\end{align}

\bibliographystyle{alpha}
\bibliography{cr} 

\medbreak

{\small CNRS and Laboratoire de Math\'ematiques d'Orsay, Universit\'e Paris-Saclay
\smallskip

E-mail: wei.qian@universite-paris-saclay.fr}

\end{document}